\newenvironment{enumroman}{\begin{enumerate}[\upshape (i)]}{\end{enumerate}}
\title{Algebraic model structures}
\author{Emily Riehl}
\address{Department of Mathematics \\
University of Chicago \\
5734 S. University Ave.\\
Chicago, IL 60637}
\email{eriehl@math.uchicago.edu\\ www.math.uchicago.edu/$\sim$eriehl}
\thanks{Supported by a NSF Graduate Research Fellowship} 
\keywords{model categories; factorization systems}
\subjclass{55U35, 18A32}
\theoremstyle{plain}
\newtheorem{thm}{Theorem}[section] 
\newtheorem{cor}[thm]{Corollary}
\newtheorem{prop}[thm]{Proposition}
\newtheorem{lem}[thm]{Lemma}
\newtheorem*{thm1}{Theorem \ref{adjthm5}}
\theoremstyle{definition}
\newtheorem{defn}[thm]{Definition}
\newtheorem{ex}[thm]{Example}
\newtheorem{rmk}[thm]{Remark}
\newtheorem*{notation}{Notation}
\newtheorem*{thank}{Acknowledgments}
\newtheorem*{defn1}{Definition \ref{algquilldefn}}
\let\c@equation\c@thm
\numberwithin{equation}{section}
\newcommand{\R}{\mathbb{R}}
\newcommand{\Q}{\mathbb{Q}}
\newcommand{\A}{\mathcal{A}}
\newcommand{\B}{\mathcal{B}}
\renewcommand{\L}{\mathbb{L}}
\newcommand{\C}{\mathcal{C}}
\newcommand{\F}{\mathcal{F}}
\newcommand{\G}{\mathcal{G}}
\newcommand{\W}{\mathcal{W}}
\newcommand{\M}{\mathcal{M}}
\newcommand{\K}{\mathcal{K}}
\newcommand{\I}{\mathcal{I}}
\newcommand{\J}{\mathcal{J}}
\newcommand{\cL}{\mathcal{L}}
\newcommand{\cR}{\mathcal{R}}
\newcommand{\CC}{\mathbb{C}}
\newcommand{\FF}{\mathbb{F}}
\newcommand{\Kl}{\mathbf{Kl}}
\newcommand{\coKl}{\mathbf{coKl}}
\renewcommand{\a}{\alpha}
\renewcommand{\b}{\beta}
\renewcommand{\d}{\delta}
\newcommand{\e}{\epsilon}
\newcommand{\g}{\gamma}
\renewcommand{\l}{\lambda}
\renewcommand{\t}{\tau}
\newcommand{\op}{\text{op}}
\newcommand{\id}{\text{id}}
\newcommand{\dom}{\text{dom}}
\newcommand{\cod}{\text{cod}}
\renewcommand{\lim}{\text{lim}}
\newcommand{\Hom}{\text{Hom}}
\newcommand{\Sq}{\text{Sq}}
\newcommand{\ladj}{\text{ladj}}
\newcommand{\Rtalg}{\mathbb{R}_t\text{-}\mathrm{{\bf alg}}}
\newcommand{\Ralg}{\mathbb{R}\text{-}\mathrm{{\bf alg}}}
\newcommand{\Ftalg}{\mathbb{F}_t\text{-}\mathrm{{\bf alg}}}
\newcommand{\Falg}{\mathbb{F}\text{-}\mathrm{{\bf alg}}}
\newcommand{\Qalg}{\mathbb{Q}\text{-}\mathrm{{\bf coalg}}}
\newcommand{\Ltalg}{\mathbb{L}_t\text{-}\mathrm{{\bf coalg}}}
\newcommand{\Lalg}{\mathbb{L}\text{-}\mathrm{{\bf coalg}}}
\newcommand{\Ctalg}{\mathbb{C}_t\text{-}\mathrm{{\bf coalg}}}
\newcommand{\Calg}{\mathbb{C}\text{-}\mathrm{{\bf coalg}}}
\newcommand{\ra}{\rightarrow}
\newcommand{\lra}{\longrightarrow}
\newcommand{\sr}{\stackrel}
\newcommand{\Ra}{\Rightarrow}
\newcommand{\ol}{\overline}
\newcommand{\Cat}{\text{\bf Cat}}
\newcommand{\Set}{\text{\bf Set}}
\newcommand{\Top}{\text{\bf Top}}
\newcommand{\sSet}{\text{\bf sSet}}
\newcommand{\Ch}{\text{\bf Ch}}
\newcommand{\CAT}{\text{\bf CAT}}
\newcommand{\FunF}{\text{\bf FF}}
\newcommand{\AWFS}{\text{\bf AWFS}}
\newcommand{\LAWFS}{\text{\bf LAWFS}}
\newcommand{\Cmd}{\text{\bf Cmd}}
\begin{document} 
 
\begin{abstract} 
We define a new notion of an algebraic model structure, in which the cofibrations and fibrations are retracts of coalgebras for comonads and algebras for monads, and prove ``algebraic'' analogs of classical results. Using a modified version of Quillen's small object argument, we show that every cofibrantly generated model structure in the usual sense underlies a cofibrantly generated algebraic model structure. We show how to pass a cofibrantly generated algebraic model structure across an adjunction, and we characterize the algebraic Quillen adjunction that results. We prove that pointwise algebraic weak factorization systems on diagram categories are cofibrantly generated if the original ones are, and we give an algebraic generalization of the projective model structure. Finally, we prove that certain fundamental comparison maps present in any cofibrantly generated model category are cofibrations when the cofibrations are monomorphisms, a conclusion that does not seem to be provable in the classical, non-algebraic, theory. 
\end{abstract}

\maketitle
\tableofcontents 

\section{Introduction}

Weak factorization systems are familiar in essence if not in name to algebraic topologists. Loosely, they consist of left and right classes of maps in a fixed category that satisfy a dual lifting property and are such that every arrow of the category can be factored as a left map followed by a right one. Neither these factorizations nor the lifts are unique; hence, the adjective ``weak.'' Two weak factorization systems are present in Quillen's definition of a model structure \cite{quillenhomotopical} on a category. Indeed, for any weak factorization system, the left class of maps behaves like the cofibrations familiar to topologists while the right class of maps behaves like the dual notion of fibrations.

Category theorists have studied weak factorization systems in their own right, often with other applications in mind. From a categorical point of view, weak factorization systems, even those whose factorizations are described \emph{functorially}, suffer from several defects, the most obvious of which is the failure of the left and right classes to be closed under all colimits and limits, respectively, in the arrow category.

\emph{Algebraic} weak factorization systems, originally called \emph{natural} weak factorization systems, were introduced in 2006 by Marco Grandis and Walter Tholen \cite{gtnatural} to provide a remedy. In an algebraic weak factorization system, the functorial factorizations are given by functors that underlie a comonad and a monad, respectively. The left class of maps consists of coalgebras for the comonad and the right class consists of algebras for the monad. The algebraic data accompanying the arrows in each class can be used to construct a canonical solution to any lifting problem that is natural with respect to maps of coalgebras and maps of algebras. A classical construction in the same vein is the path lifting functions which can be chosen to accompany any Hurewicz fibration of spaces \cite{mayclassifying}.

More recently, Richard Garner adapted Quillen's small object argument so that it produces algebraic weak factorization systems \cite{garnercofibrantly, garnerunderstanding}, while simultaneously simplifying the functorial factorizations so constructed. In practice, this means that whenever a model structure is cofibrantly generated, its weak factorization systems can be ``algebraicized'' to produce algebraic weak factorization systems, while the underlying model structure remains unchanged. 

The consequences of this possibility appear to have been thus far unexplored. This paper begins to do so, although the author hopes this will be the commencement, rather than the culmination, of an investigation into the application of algebraic weak factorization systems to model structures. At the moment, we do not have particular applications in mind to justify this extension of classical model category theory. However, these extensions feel correct from a categorical point of view, and we are confident that suitable applications will be found. 

Section \ref{backgroundsec} contains the necessary background. We review the definition of a weak factorization system and state precisely what we mean by a functorial factorization. We then introduce algebraic weak factorization systems and describe a few important properties. We explain what it means for a algebraic weak factorization system to be cofibrantly generated and prove a lemma about such factorization systems that will have many applications. More details about Garner's small object argument, including a comparison with Quillen's, are given later, as needed.  

Section \ref{algsec} is in many ways the heart of this paper. To begin, we define an \emph{algebraic model structure}, that is, a model structure built out of algebraic weak factorization systems instead of ordinary ones. One feature of this definition is that it includes a notion of a natural comparison map between the two functorial factorizations. As an application, one obtains a natural arrow comparing the two fibrant-cofibrant replacements of an object, which can be used to construct a category of algebraically bifibrant objects in our model structure. We prove that cofibrantly generated algebraic model structures can be passed across an adjunction, generalizing a result due to Daniel Kan.

The adjunction between the algebraic model structures in this situation has many interesting properties, consideration of which leads us to define an \emph{algebraic Quillen adjunction}. For such adjunctions, the right adjoint lifts to a functor between the categories of algebras for each pair of algebraic weak factorization systems, which should be thought of as an algebraization of the fact that Quillen right adjoints preserve fibrations and trivial fibrations. Furthermore, the lifts for the fibrations and trivial fibrations are natural, in the sense that they commute with the functors induced by the comparison maps. Dually, the left adjoint lifts to functor between the categories of coalgebras and these lifts are natural. In order to prove that the adjunction described above is an algebraic Quillen adjunction, we must develop a fair bit of theory, a task we defer to later sections.

In Section \ref{levelsec}, we describe the pointwise algebraic weak factorization system on a diagram category and prove that it is cofibrantly generated whenever the inducing one is. This result is only possible because Garner's small object argument allows the ``generators'' to be a category, rather than simply a set.  One place where such algebraic weak factorization systems appear is in Lack's trivial model structure on certain diagram 2-categories \cite{lackhomotopy}, and consequently, these algebraic model structures are cofibrantly generated in the new sense, but not in the classical one. We then use the pointwise algebraic weak factorization system together with the work of Section \ref{algsec} to obtain a generalization of the projective model structure on a diagram category.
 
In Section \ref{appsec}, we showcase some advantages of algebraicizing cofibrantly generated model structures. Using the characterization of cofibrations and fibrations as coalgebras and algebras, we have techniques for recognizing cofibrations constructed as colimits and fibrations constructed as limits that are not available otherwise. We use these techniques to prove the surprising fact that the natural comparison map between the algebraic weak factorization systems of a cofibrantly generated algebraic model category consists of pointwise cofibration coalgebras, at least when the cofibrations in the model structure are monomorphisms. We conclude by applying these techniques to prove that the fibrant replacement monad in this setting preserves certain trivial cofibrations, a fact relevant to the study of categories of algebraically fibrant objects, some of which can be given their own lifted algebraic model structure by recent work of Thomas Nikolaus \cite{nikolausalgebraic}. 

In Section \ref{adjsec}, we begin to develop the theory necessary to prove the existence of an important class of algebraic Quillen adjunctions. First, we describe what happens when we have an adjunction between categories with related algebraic weak factorization systems, such that the generators of the one are the image of the generators of the other under the left adjoint, a question that turns out to have a rather complicated answer. In this setting, the right adjoint lifts to a functor between the categories of algebras for the monads of the algebraic weak factorization systems and dually the left adjoint lifts to a functor between the categories of coalgebras, though the proof of this second fact is rather indirect. To provide appropriate context for understanding this result and as a first step towards its proof, we present three general categorical definitions describing comparisons between algebraic weak factorization systems on different categories. The first two definitions, of lax and colax morphisms of algebraic weak factorization systems, combine to give a definition of an adjunction of algebraic weak factorization systems, which is the most important of these notions. 

The most expeditious proofs of these results make use of the fact that the categories of algebras and coalgebras accompanying an algebraic weak factorization system each have a canonical composition law that is natural in a suitable double categorical sense; in particular each algebraic weak factorization system gives rise to two double categories, whose vertical morphisms are either algebras or coalgebras and whose squares are morphisms of such. This composition, introduced in Section \ref{backgroundsec}, provides a recognition principle that identifies an algebraic weak factorization system from either the category of algebras for the monad or the category of coalgebras for the comonad.  As a consequence, it suffices in many situations to consider either the comonad or the monad individually, which is particularly useful here. 

The existence of adjunctions of cofibrantly generated algebraic weak factorization systems demands an extension of the universal property of Garner's small object argument. We conclude Section \ref{adjsec} with a statement and proof of the appropriate change-of-base result, which we use to compare the outputs of the small object argument on categories related by adjunctions. This extension is not frivolous; a corollary provides exactly the result we need to prove the naturality statement in the main theorem of the final section.

In Section \ref{algquillensec}, we apply the results of the previous section to prove that there is a canonical algebraic Quillen adjunction between the algebraic model structures constructed at the end of Section \ref{algsec}. The data of this algebraization includes five instances of adjunctions between algebraic weak factorization systems. Two of these are given by the comparison maps for each algebraic model structure. The other three provide an algebraic description of the relationship between the various types of factorizations on the two categories.

For convenience, we'll abbreviate algebraic weak factorization system as \emph{awfs}, which will also be the abbreviation for the plural, with the correct interpretation clear from context. Similarly, we write \emph{wfs} for the singular or plural of weak factorization system. The wfs mentioned in this paper beyond Section \ref{wfsssec} underlie some awfs and are therefore functorial.

\begin{thank} The author would like to thank her advisors at Chicago and Cambridge -- Peter May and Martin Hyland -- the latter for introducing her to this topic and the former for many productive sessions discussing this work as well as very helpful feedback on innumerable drafts of this paper.  The author is also grateful for several conversations with Mike Shulman and Richard Garner, some of the results of which are contained in Theorem \ref{cofthm} and Lemma \ref{coflem}. The latter also conjectured Lemma \ref{laxawfslem}, which enabled a simplification of the initial proof of Theorem \ref{adjawfsthm}, while the former also commented on an earlier draft of this paper and suggested the definitions of Section \ref{adjsec} and the statement and proof of Corollary \ref{adjawfscor}. Anna Marie Bohmann suggested the notation for the natural transformations involved in an awfs. The author was supported by a NSF Graduate Research Fellowship.
\end{thank}

\section{Background and recent history}\label{backgroundsec}

There are many sources that describe the basic properties of weak factorization systems of various stripes (e.g., \cite{ktfactorization} or \cite{rtlax}). We choose not to give a full account here and only include the topics that are most essential.

First some notation. We write ${\bf n}$ for the category associated to the ordinal $n$ as a poset, i.e., the category with $n$ objects 0, 1, $\ldots$, $n-1$ and morphisms $i \ra j$ just when $i \leq j$. Let $d^0, d^1, d^2 \colon {\bf 2} \ra {\bf 3}$ denote the three functors which are injective on objects; the superscript indicates which object is not contained in the image. Precomposition induces functors $d_0, d_1, d_2 \colon \M^{\bf 3} \ra \M^{\bf 2}$ for any category $\M$; where we write $\M^{\A}$ for the category of functors $\A \ra \M$ and natural transformations. We refer to $d_1$ as the ``composition functor'' because it composes the two arrows in the image of the generating non-identity morphisms of ${\bf 3}$.

\begin{defn}\label{coddomdefn} We are particularly interested in the category $\M^{\bf 2}$, sometimes known as the \emph{arrow category} of $\M$. Its objects are arrows of $\M$, which we draw vertically, and its morphisms $(u,v) \colon f \Ra g$ are commutative squares\footnote{We depict morphisms of $\M^{\bf 2}$ with a double arrow because $(u,v)$ is secretly a natural transformation between the functors $f,g \colon {\bf 2} \ra \M$, though we do not often think of it as such.} $$\xymatrix{ \cdot \ar[r]^u \ar[d]_f & \cdot \ar[d]^g \\ \cdot \ar[r]_v & \cdot}$$ There are canonical forgetful functors $\dom,\cod \colon \M^{\bf 2} \ra \M$ that project to the top and bottom edges of this square, respectively.
\end{defn}

The material in Sections \ref{wfsssec} and \ref{ffssec} is well-known to category theorists at least, while the material in Sections \ref{awfsssec} - \ref{cgssec} is fairly new. Naturally, we spend more time in the latter sections than in the former.

\subsection{Weak factorization systems}\label{wfsssec}

Colloquially, a \emph{weak factorization system} consists of two classes of arrows, the ``left'' and the ``right'', that have a lifting property with respect to each other and satisfy a factorization axiom. The lifting property says that whenever we have a commutative square as in (\ref{lift}) with $l$ in the left class of arrows and $r$ in the right, there exists an arrow $w$ as indicated so that both triangles commute. 

\begin{notation} When every lifting problem of the form posed by the commutative square 
\begin{equation}\label{lift} \xymatrix{ \cdot \ar[d]_l \ar[r]^u & \cdot \ar[d]^r \\ \cdot \ar@{-->}[ur]_w \ar[r]_v & \cdot}
\end{equation} has a solution $w$, we write $l \boxslash r$ and say that $l$ has the \emph{left lifting property} (LLP) with respect to $r$ and, equivalently, that $r$ has the \emph{right lifting property} (RLP) with respect to $l$. If $\A$ is a class of arrows, we write $\A^{\boxslash}$ for the class of arrows with the RLP with respect to each arrow in $\A$. Similarly, we write ${}^{\boxslash}\A$ for the class of arrows with the LLP with respect to each arrow in $\A$. 

In general, $\A \subset {}^{\boxslash}\B$ if and only if $\B \subset \A^{\boxslash}$; in this situation, we write $\A \boxslash \B$ and say that $\A$ has the LLP with respect to $\B$ and, equivalently, that $\B$ has the RLP with respect to $\A$. The operations $(-)^{\boxslash}$ and ${}^{\boxslash}(-)$ form a Galois connection with respect to the posets of classes of arrows of a category, ordered by inclusion.
\end{notation}

\begin{defn} A \emph{weak factorization system} $(\mathcal{L},\mathcal{R})$ on a category $\M$ consists of classes of morphisms $\mathcal{L}$ and $\mathcal{R}$ such that \begin{enumroman} \item Every morphism $f $ in $\M$ factors as $r \cdot l$, with $l \in \mathcal{L}$ and $r \in \mathcal{R}$. \item $\mathcal{L} = {}^{\boxslash} \mathcal{R}$ and $\mathcal{R} = \mathcal{L}^{\boxslash}$. \end{enumroman}
\end{defn}

Any class $\cL$ that equals ${}^{\boxslash}\cR$ for some class $\cR$ is \emph{saturated}, which means that $\cL$ contains all isomorphisms and is closed under coproducts, pushouts, transfinite composition, and retracts. The class $\cR= \cL^{\boxslash}$ has dual closure properties, which again has nothing to do with the factorization axiom. The following alternative definition of a weak factorization system is equivalent to the one given above.

\begin{defn}\label{wfsdefn2} A \emph{weak factorization system} $(\mathcal{L},\mathcal{R})$ in a category $\M$ consists of classes of morphisms $\mathcal{L}$ and $\mathcal{R}$ such that \begin{enumroman} \item Every morphism $f $ in $\M$ factors as $r \cdot l$, with $l \in \mathcal{L}$ and $r \in \mathcal{R}$. \item $\mathcal{L} \boxslash \mathcal{R}$. \item $\cL$ and $\cR$ are closed under retracts. \end{enumroman}
\end{defn}

Any model structure provides two examples of weak factorization systems: one for the trivial cofibrations and the fibrations and another for the cofibrations and trivial fibrations. Indeed, a particularly concise definition of a model structure on a complete and cocomplete category $\M$ is the following: a \emph{model structure} consists of three class of maps $\C$, $\F$, $\W$ such that $\W$ satisfies the 2-of-3 property and such that $(\C \cap \W, \F)$ and $(\C, \W \cap \F)$ are wfs.\footnote{It is not immediately obvious that $\W$ must be closed under retracts but this does follow by a clever argument the author learned from Andr\'{e} Joyal \cite[\S F]{joyaltheoryI}.}

\subsection{Functorial factorization}\label{ffssec} 

\begin{defn}\label{ffdefn} A \emph{functorial factorization} is a functor $\vec{E}\colon \M^{\bf 2} \ra \M^{\bf 3}$ that is a section of the ``composition'' functor $d_1 \colon\M^{\bf 3} \ra \M^{\bf 2}$.
\end{defn}

Explicitly, a functorial factorization consists of a pair of functors $L, R \colon \M^{\bf 2} \ra\M^{\bf 2}$ such that $f = Rf \cdot Lf$ for all morphisms $f \in \M$ and such that the following three conditions hold: $$\cod L = \dom R, \hspace{.5cm} \dom L = \dom, \hspace{.5cm} \cod R = \cod.$$ Together, $L = d_2 \circ \vec{E}$ and $R = d_0 \circ \vec{E}$ contain all of the data of the functor $\vec{E}$. The fact that $L$ and $R$ arise in this way implies all of the conditions described above.

It will often be convenient to have notation for the functor $\M^{\bf 2} \ra \M$ that takes an arrow to the object it factors through, and we typically write $E$ for this, without the arrow decoration. With this notation, the functorial factorization $\vec{E}\colon \M^{\bf 2} \ra \M^{\bf 3}$ sends  a commutative square \begin{equation}\label{funfact} \raisebox{.25in}{\xymatrix{ \cdot \ar[r]^u \ar[d]_f & \cdot \ar[d]^g \\ \cdot \ar[r]_v & \cdot}}\ \text{to~a~commutative~rectangle}\ \raisebox{.52in}{\xymatrix{\cdot \ar[r]^u \ar[d]_{Lf} & \cdot \ar[d]^{Lg} \\ Ef \ar[r]^{E(u,v)} \ar[d]_{Rf} & Eg \ar[d]^{Rg} \\ \cdot \ar[r]_v & \cdot}}\end{equation} We'll refer to $E \colon \M^{\bf 2} \ra \M$ as the \emph{functor accompanying the functorial factorization} $(L,R)$.

\begin{defn}
A wfs is called \emph{functorial} if it has a functorial factorization with $Lf \in \cL$ and $Rf \in \cR$ for all $f$.  
\end{defn}

There is a stronger notion of wfs called an \emph{orthogonal factorization system}, abbreviated \emph{ofs}, in which solutions to a given lifting problem are required to be unique.\footnote{An example in $\Set$ takes the epimorphisms as the left class and the monomorphisms as the right class. When we exchange these classes the result is a wfs.} These are sometimes called \emph{factorization systems} in the literature. It follows from the uniqueness of the lifts that the factorizations of an ofs are always functorial. For this stronger notion, the left class is closed under all colimits and the right under all limits, taken in the arrow category. 

Relative to orthogonal factorization systems, wfs with functorial factorizations suffer from two principal defects. The first is that a functorial wfs on $\M$ does not induce a pointwise wfs on a diagram category $\M^{\A}$, where $\A$ is a small category. The functorial factorization does allow us to factor natural transformations pointwise, but in general the resulting left factors will not lift against the right ones, even though their constituent arrows satisfy the required lifting property. This is because the pointwise lifts which necessarily exist are not naturally chosen and so do not fit together to form a natural transformation.

The second defect is that the classes of a functorial wfs, as for a generic wfs, fail in general to be closed under all the limits and colimits that one might expect. Specifically, we might hope that the left class would be closed under all colimits in $\M^{\bf 2}$ and the right class would be closed under all limits. As those who are familiar with working with cofibrations know, this is not true in general. 

These failings motivated Grandis and Tholen to define \emph{algebraic weak factorization systems} \cite{gtnatural}, which are functorial wfs with extra structure that addresses both of these issues. 

\subsection{Algebraic weak factorization systems}\label{awfsssec} 

Any functorial factorization gives rise to two endofunctors $L, R \colon \M^{\bf 2} \ra \M^{\bf 2}$, which are equipped with natural transformations to and from the identity, respectively. Explicitly, $L$ is equipped with a natural transformation $\vec{\e} \colon L \Ra \id$ whose components consist of the squares $\vec{\e}_f = \raisebox{.25in}{\xymatrix{ \cdot \ar[d]_{Lf} \ar@{=}[r] & \cdot \ar[d]^f \\ \cdot \ar[r]_{Rf} & \cdot}}$. We call $\vec{\e}$ the \emph{counit} of the endofunctor $L$ and write $\e_f := Rf$ for the codomain part of the morphism $\vec{\e}_f$. Using the notation of Definition \ref{coddomdefn}, $\vec{\e} = (1,\e)$. The component $\e \colon E \Ra \cod$ is a natural transformation in its own right, where $E$ is as in (\ref{funfact}).

Dually, $R$ is equipped with a natural transformation $\vec{\eta} \colon \id \Ra R$ whose components are squares $\vec{\eta}_g = \raisebox{.25in}{\xymatrix{ \cdot \ar[d]_g \ar[r]^{Lg} & \cdot \ar[d]^{Rg} \\ \cdot \ar@{=}[r] & \cdot}}$. We call $\vec{\eta}$ the \emph{unit} of the endofunctor $R$ and write $\eta = \dom\, \vec{\eta}$ for the natural transformation $\dom \Ra E$.  We write $\vec{\eta} = (\eta,1)$ in the notation of Definition \ref{coddomdefn}. We call a functor $L$ equipped with a natural transformation to the identity functor \emph{left pointed} and a functor $R$ equipped with a natural transformation from the identity functor \emph{right pointed}, though the directional adjectives may be dropped when the direction (left vs.~right) is clear from context.

\begin{lem}\label{retcharlem} In a functorial wfs $(\cL,\cR)$, the maps in $\cR$ are precisely those arrows which admit an algebra structure for the pointed endofunctor $(R, \vec{\eta})$. Dually, the class $\cL$ consists of those maps that admit a coalgebra structure for $(L,\vec{\e})$.
\end{lem}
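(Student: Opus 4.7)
The plan is to unwind what a $(R,\vec{\eta})$-algebra structure on an arrow $g \in \M^{\bf 2}$ actually is, and recognise it as equivalent data to (a) a solution to a canonical lifting problem of $Lg$ against $g$, and (b) a retract diagram exhibiting $g$ as a retract of $Rg$ in the arrow category. Once this translation is in hand, each implication becomes a one-line consequence of the wfs axioms.

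First I would spell out that such a structure consists of a morphism $s = (s_1, s_2): Rg \Ra g$ in $\M^{\bf 2}$ satisfying $s \cdot \vec{\eta}_g = \id_g$. Because $\vec{\eta}_g = (Lg, \id_{\cod g})$, the codomain coordinate forces $s_2 = \id_{\cod g}$, while the domain coordinate becomes $s_1 \cdot Lg = \id_{\dom g}$; the requirement that $s$ be a morphism in $\M^{\bf 2}$ further gives $g \cdot s_1 = Rg$. Thus $s_1$ is precisely a diagonal filler for the canonical lifting problem
$$\xymatrix{\dom g \ar@{=}[r] \ar[d]_{Lg} & \dom g \ar[d]^g \\ Eg \ar[r]_{Rg} \ar@{-->}[ur]^{s_1} & \cod g.}$$

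For the forward direction, if $g \in \cR$ then this square pits $Lg \in \cL$ against $g \in \cR$, and the wfs lifting axiom supplies a filler $s_1$, yielding the required algebra structure. For the reverse direction, any algebra structure $s$ presents $g$ as a retract of $Rg$ in $\M^{\bf 2}$, with section $\vec{\eta}_g$ and retraction $s$; since $Rg \in \cR$ by hypothesis on the functorial factorization, and $\cR$ is closed under retracts (this is part of Definition \ref{wfsdefn2}, and in any case automatic from $\cR = \cL^{\boxslash}$), we conclude $g \in \cR$.

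The coalgebra statement is entirely dual --- swap $L \leftrightarrow R$, $\vec{\e} \leftrightarrow \vec{\eta}$, and $\cL \leftrightarrow \cR$, with the square instead having $\id$ on the bottom and $Lg$ on the top, expressing $g$ as a retract of $Lg$. There is no genuine obstacle here: the whole content is the bookkeeping that identifies the nontrivial coordinate of the algebra datum as a diagonal filler; thereafter both implications are immediate from the lifting and retract-closure axioms.
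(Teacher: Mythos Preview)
Your proposal is correct and follows essentially the same argument as the paper: both identify an $(R,\vec{\eta})$-algebra structure on $g$ with a diagonal filler of the canonical square $Lg$ against $g$, obtain the filler from the lifting axiom when $g\in\cR$, and use the resulting retract of $Rg$ together with retract-closure of $\cR$ for the converse. Your write-up is slightly more explicit about the coordinate bookkeeping (deriving $s_2=\id$ from $\cod\,\vec{\eta}_g=\id$), but the content is identical.
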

\begin{proof}
Algebras for a right pointed endofunctor are defined similarly to algebras for a monad, but in the absence of a multiplication natural transformation, the algebra structure maps need only satisfy a unit condition. If $g \in \cR$ then it lifts against its left factor as shown $\raisebox{.25in}{\xymatrix{ \cdot \ar@{=}[r] \ar[d]_{Lg} & \cdot \ar[d]^{g} \\ \cdot \ar@{-->}[ur]^t \ar[r]_{Rg} & \cdot}}$. The arrow $(t,1) \colon Rg \Ra g$ makes $g$ an algebra for $(R,\vec{\eta})$. Conversely, if $g$ has an algebra structure $(t,s)$ then the unit axiom implies that $\raisebox{.25in}{\xymatrix{\cdot \ar[d]_g \ar[r]^{Lg} & \cdot \ar[d]^{Rg} \ar[r]^t & \cdot \ar[d]^g \\ \cdot \ar@{=}[r] & \cdot \ar[r]_s & \cdot}}$ is a retract diagram (hence, $s=1$). Thus, $g$ is a retract of $Rg \in \cR$, which is closed under retracts. 
\end{proof}

The notion of a algebraic weak factorization system is an algebraization of the notion of a functorial wfs in which the above pointed endofunctors are replaced with a comonad and a monad respectively.

\begin{defn} An \emph{algebraic weak factorization system} (originally, \emph{natural weak factorization system}) on a category $\M$ consists a pair $(\L,\R)$, where $\L = (L,\vec{\e}, \vec{\d})$ is a comonad on $\M^{\bf 2}$ and $\R = (R, \vec{\eta}, \vec{\mu})$ is a monad on $\M^{\bf 2}$, such that $(L,\vec{\e})$ and $(R,\vec{\eta})$ are the pointed endofunctors of some functorial factorization $\vec{E}\colon \M^{\bf 2} \ra \M^{\bf 3}$.  Additionally, the accompanying natural transformation $\Delta \colon LR \Ra RL$ described below is required to be a distributive law of the comonad over the monad.
\end{defn}

Because the unit $\vec{\eta}$ arising from the functorial factorization necessarily has the form $\vec{\eta}_f = \raisebox{.25in}{\xymatrix{ \cdot \ar[d]_f \ar[r]^{Lf} & \cdot \ar[d]^{Rf} \\ \cdot \ar@{=}[r] & \cdot}}$, it follows from the monad axioms that $\vec{\mu}_f = \raisebox{.25in}{\xymatrix{ \cdot \ar[d]_{R^2f} \ar[r]^{\mu_f} & \cdot \ar[d]^{Rf} \\ \cdot \ar@{=}[r] & \cdot}}$ where $\mu \colon ER \Ra E$ is a natural transformation, with $E$ as in (\ref{funfact}).
Hence, $\R$ is a \emph{monad over} $\cod \colon \M^{\bf 2} \ra \M$, which means that $\cod R = \cod$, $\cod\, \vec{\eta} = \id_{\cod}$ and $\cod\, \vec{\mu} = \id_{\cod}$. This means that $Rf$ has the same codomain as $f$, and the codomain component of the natural transformations $\vec{\eta}$ and $\vec{\mu}$ is the identity.

Dually, $\L$ is a \emph{comonad over} $\dom$ (in the sense that it is a comonad in the 2-category $\CAT/\M$ on the object $\dom \colon \M^{\bf 2} \ra \M $). We write $\d \colon E \Ra EL$ for the natural transformation $\cod\, \vec{\d}$ analogous to $\mu = \dom\, \vec{\mu}$ defined above. As a consequence of the monad and comonad axioms, $\raisebox{.25in}{\xymatrix{ \cdot \ar[d]_{LRf} \ar[r]^{\d_f} & \cdot \ar[d]^{RLf} \\ \cdot \ar[r]_{\mu_f} & \cdot}}$ commutes for all $f$. (Indeed, the common diagonal composite is the identity.) These squares are the components of a natural transformation $\Delta \colon LR \Ra RL$, which is the distributive law mentioned above. In this context, the requirement that $\Delta$ be a distributive law of $L$ over $R$ reduces to a single condition: $\d \cdot \mu =  \mu_L \cdot E(\d, \mu) \cdot \d_R$.
Because the components of $\Delta$ are part of the data of $\L$ and $\R$, this distributive law does not provide any extra structure for the awfs; rather it is a property that we ask that the pair $(\L,\R)$ satisfy.\footnote{Grandis and Tholen's original definition did not include this condition, but Garner's does. Using Garner's definition, awfs are bialgebras with respect to a two-fold monoidal structure on the category of functorial factorizations (see \cite[\S 3.2]{garnercofibrantly}); the distributive law condition says exactly that the monoid and comonoid structures fit together to form a bialgebra. This category provides the setting for the proofs establishing the machinery of Garner's small object argument. We recommend that the first-time reader ignore these details; to repeat a quote the author has seen attributed to Frank Adams, ``to operate the machine, it is not necessary to raise the bonnet.''}

Given an awfs $(\L,\R)$, we refer to the $\L$-coalgebras as the left class and the $\R$-algebras as the right class of the awfs. Unraveling the definitions, an $\L$-coalgebra consists of a pair $(f,s)$, where $f$ is an arrow of $\M$ and $(1,s) \colon f \Ra Lf$ is an arrow in $\M^{\bf 2}$ satisfying the usual conditions so that this gives a coalgebra structure with respect to the comonad $\L$. The unit condition says that $s$ solves the canonical lifting problem of $f$ against $Rf$. Dually, an $\R$-algebra consists of a pair $(g,t)$ such that $g$ is an arrow of $\M$ and $(t,1) \colon Rg \Ra g$ is an arrow in $\M^{\bf 2}$, where $t$ lifts $Lg$ against $g$.

The algebra structure of an element $g$ of the right class of an awfs should be thought of as a chosen lifting of $g$ against any element of the left class. Given an $\L$-coalgebra $(f,s)$ and a lifting problem $(u,v) \colon f \Ra g$, the arrow $w= t \cdot E(u,v) \cdot s$  \begin{equation}\label{chosenlift}\xymatrix{ \cdot \ar[r]^u \ar[d]_{Lf} & \cdot \ar@<.5ex>[d]^{Lg}  \\ \cdot \ar@{-->}[r]^{E(u,v)} \ar@<-.5ex>[d]_{Rf} & \cdot \ar[d]^{Rg} \ar@<.5ex>@{-->}[u]^t \\ \cdot \ar@<-.5ex>@{-->}[u]_s \ar[r]_v & \cdot }\end{equation} is a solution to the lifting problem. In particular, all $\L$-coalgebras lift against all $\R$-algebras.

If we let $\cL$ and $\cR$ denote the arrows in $\M$ that have some $\L$-coalgebra structure or $\R$-algebra structure, respectively, then it is not quite true that $(\cL, \cR)$ is a wfs. This is because retracts of maps in $\cL$ will also lift against elements of $\cR$, but the categories of coalgebras for a comonad and algebras for a monad are not closed under retracts. We write $\overline{\cL}$ for the retract closure of $\cL$ and similarly for $\cR$ and refer to the wfs $(\overline{\cL}, \overline{\cR})$ as the \emph{underlying wfs} of $(\L,\R)$. It is, in particular, functorial.

\begin{rmk}\label{endofunctorrmk}
Because the class of $\L$-algebras is not closed under retracts, not every arrow in the left class of the underlying wfs $(\overline{\cL}, \overline{\cR})$ of the awfs $(\L,\R)$ will have an $\L$-coalgebra structure. The same is true for the right class. (But see Lemma \ref{retractlem}!)

However, as we saw in Lemma \ref{retcharlem}, every arrow of $\overline{\cL}$ will have a coalgebra structure for the left pointed endofunctor $(L,\vec{\e})$ and conversely every coalgebra will be an element of $\overline{\cL}$.  It follows that coalgebras for the pointed endofunctors underlying an awfs are closed under retracts; this can also be proved directly. In fact, the coalgebras for the pointed endofunctor underlying a comonad are the retract closure of the coalgebras for the comonad. The proof of this statement uses the fact that the map $(1,s) \colon f \Ra Lf$ makes $f$ a retract of its left factor $Lf$, which has a free coalgebra structure for the comonad $\L$. Similar results apply to the right class $\overline{\cR}$.
\end{rmk}

\begin{ex}\label{ofsex} Any orthogonal factorization system $(\cL,\cR)$ is an awfs. Orthogonal factorization systems are always functorial, with all possible choices of functorial factorizations canonically isomorphic. The comultiplication and multiplication natural transformations for the functors $L$ and $R$ are defined to be the unique solutions to the lifting problems $\raisebox{.25in}{\xymatrix{ \cdot \ar[d]_L \ar[r]^{LL} & \cdot \ar[d]^{RL} \\ \cdot \ar@{-->}[ur]_{\d} \ar@{=}[r] & \cdot}}$ and $\raisebox{.25in}{\xymatrix{ \cdot \ar[d]_{LR} \ar@{=}[r] & \cdot \ar[d]^R \\ \cdot \ar@{-->}[ur]^{\mu}\ar[r]_{RR} & \cdot}}$. Every element of $\cR$ has a unique $\R$-algebra structure and the structure map is an isomorphism. Similarly, every element of $\cL$ has a unique $\L$-coalgebra structure, with structure map an isomorphism. It follows that the classes of $\R$-algebras and $\L$-coalgebras are closed under retracts. The remaining details are left as an exercise.
\end{ex}

In light of Remark \ref{endofunctorrmk}, why does it make sense to use a definition of awfs that privileges coalgebra structures for the comonad $\L$ over coalgebras for the left pointed endofunctor $(L,\vec{\e})$, and similarly on the right? We suggest three justifications. The first is that coalgebras for the comonad are often ``nicer'' than coalgebras for the pointed endofunctor. In examples, the former are analogous to ``relative cell complexes'' while the latter are the ``retracts of relative cell complexes.'' A second reason is that we can compose coalgebras for the comonad in an awfs, meaning we can give the composite arrow a canonical coalgebra structure. This definition, which will be given in Section \ref{compssec}, uses the multiplication for the monad explicitly, so is not possible without this extra algebraic structure. Finally, and perhaps most importantly, coalgebras for a comonad are closed under colimits, as we will prove in Theorem \ref{closureprop}. There is no analogous result for $(L,\vec{\e})$-coalgebras. The upshot is that when examining colimits, the extra effort to check that a diagram lands in $\Lalg$ is often worth it.

\begin{rmk}\label{chosenrmk} The original name \emph{natural} weak factorization system is in some sense a misnomer. In most cases, the lift of a map $r$ in the right class against its left factor is not \emph{natural}; it's simply \emph{chosen} and recorded in the fact that associate to the arrow $r$ a piece of \emph{algebraic} data. Solutions to lifting problems of the form (\ref{lift}) are constructed by combining the coalgebraic and algebraic data of $l$ and $r$ with a functorial factorization of the square. These lifts are not natural with respect to all morphisms in the arrow category. They are however natural with respect to morphisms of $\Lalg$ and $\Ralg$, but that is true precisely because morphisms in a category of algebras are required to preserve the algebraic structure.

In an important special case, however, there are natural lifts; namely, for the free morphisms that arise as left and right factors of arrows. Hence, the adjective ``natural'' appropriately describes these factorizations. The multiplication of the monad $\R$ gives any arrow of the form $Rf$ a natural $\R$-algebra structure $\mu_f$. Similarly, the arrows $Lf$ have a natural $\L$-coalgebra structure $\d_f$ using the comultiplication of the comonad. Of course, it may be that there are other ways to choose lifting data for these arrows, but the natural choices provided by the comultiplication and multiplication have the property that the map from $Lf$ to $Lg$ or $Rf$ to $Rg$ arising from any map $(u,v) \colon f \Ra g$ preserves the lifting data. 
\end{rmk}

We conclude this section with one final definition that will prove very important in Section \ref{algsec} and beyond.

\begin{defn}\label{morawfsdefn} A \emph{morphism of awfs} $\xi \colon (\L,\R) \ra (\L',\R')$ is a natural transformation $\xi \colon E \Ra E'$ that is a \emph{morphism of functorial factorizations}, i.e., such that\begin{equation}\label{morff}\xymatrix{ & \cdot \ar[dl]_{L f} \ar[dr]^{L' f} & \\ Ef \ar[rr]^{\xi_f} \ar[dr]_{R f} && E'f \ar[dl]^{R' f} \\ & \cdot &}\end{equation} commutes, and such that the natural transformations $(1,\xi) \colon L \Ra L'$ and $(\xi,1) \colon R \Ra R'$ are comonad and monad morphisms, respectively, which means that these natural transformations satisfy unit and associativity conditions. It follows that a morphism of awfs $\xi$ induces functors $\xi_* \colon \Lalg \ra \L'\text{-}\mathrm{\bf coalg}$ and $\xi^* \colon\R'\text{-}\mathrm{\bf alg} \ra \Ralg$ between the Eilenberg-Moore categories of coalgebras and algebras.
\end{defn}

\subsection{Limit and colimit closure}\label{closuressec}

It remains to explain how an awfs rectifies the defects mentioned at the end of \ref{ffssec}. We will speak at length about induced pointwise awfs later in Section \ref{levelsec}, but we can deal with colimit and limit closure right now.

Let $\Ralg$ denote the Eilenberg-Moore category of algebras for the monad $\R$ and let $\Lalg$ similarly denote the category of coalgebras for $\L$. It is a well-known categorical fact that the forgetful functors $U \colon \Ralg \ra \M^{\bf 2}$, $U \colon \Lalg \ra \M^{\bf 2}$ create all limits and colimits, respectively, that exist in $\M^{\bf 2}$. It follows that the right and left classes of the awfs $(\L,\R)$ are closed under limits and colimits, respectively. We have proven the following result of \cite{gtnatural}.

\begin{prop}[Grandis-Tholen]\label{closureprop} If $\M$ has colimits (respectively limits) of a given type, then $\Lalg$ (respectively $\Ralg$) has them, formed as in $\M^{\bf 2}$.
\end{prop}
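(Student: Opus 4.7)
The proof amounts to invoking a standard piece of monad/comonad theory, so my plan is to reduce the claim to that general fact in two short steps. First I would observe that limits and colimits in the arrow category $\M^{\bf 2}$ are computed pointwise from those in $\M$: the domain and codomain functors $\dom, \cod \colon \M^{\bf 2} \ra \M$ jointly detect and create all limits and colimits that exist in $\M$, because $\M^{\bf 2} = \M^{\mathbf{2}}$ is a functor category on the finite category $\mathbf{2}$. So from the hypothesis on $\M$ we immediately get the corresponding (co)limits in $\M^{\bf 2}$.

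Next I would invoke the classical result that for any monad $\R$ on a category $\mathcal{X}$, the forgetful functor $U \colon \R\text{-}\mathbf{alg} \ra \mathcal{X}$ creates all limits that exist in $\mathcal{X}$, and dually for any comonad $\L$ the forgetful functor $U \colon \L\text{-}\mathbf{coalg} \ra \mathcal{X}$ creates all colimits that exist in $\mathcal{X}$. Applied to the monad $\R$ and comonad $\L$ on $\mathcal{X} = \M^{\bf 2}$, this says exactly that any diagram in $\Ralg$ (resp.~$\Lalg$) whose underlying diagram in $\M^{\bf 2}$ has a limit (resp.~colimit) lifts uniquely to a limit (resp.~colimit) in the category of algebras (resp.~coalgebras), and this lift is preserved by $U$.

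Combining the two steps gives the proposition: (co)limits of any given type in $\M$ induce pointwise (co)limits in $\M^{\bf 2}$, and these are then created by the forgetful functor from $\Ralg$ or $\Lalg$. The only thing worth spelling out is the dual direction, since the statement bundles the monad case with the comonad case; I would just note the duality rather than redo the argument. There is no real obstacle here — the content of the result lies entirely in the general creation theorem for Eilenberg–Moore categories, and once that is cited, nothing remains to prove beyond remarking that $\M^{\bf 2}$ has pointwise (co)limits.
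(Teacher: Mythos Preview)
Your proposal is correct and is essentially the same argument the paper gives: the paper simply invokes the well-known fact that the forgetful functors $U \colon \Ralg \ra \M^{\bf 2}$ and $U \colon \Lalg \ra \M^{\bf 2}$ create all limits and colimits, respectively, that exist in $\M^{\bf 2}$. Your additional remark that (co)limits in $\M^{\bf 2}$ are computed pointwise from those in $\M$ makes explicit what the paper leaves implicit, but otherwise the two arguments coincide.
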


\begin{rmk}\label{closureproprmk} It is possible to interpret \ref{closureprop} too broadly. This does not say that for any diagram in $\M^{\bf 2}$ such that the objects have a coalgebra structure, the colimit will have a coalgebra structure. This conclusion will only follow if the maps of the colimit diagram are arrows in $\Lalg$ and not just in $\M^{\bf 2}$.

However, we do now have a method for proving that a particular colimit is a coalgebra: namely checking that the maps in the relevant colimiting diagram are maps of coalgebras. While this can be tedious, it will allow us to prove surprising results about cofibrations, which the author suspects are intractable by other methods. (See, e.g., Theorem \ref{cofthm}. It is also possible to prove Corollary \ref{Tthm} directly in this manner.)
\end{rmk}

\begin{ex} An example will illustrate this important point, though we have to jump ahead a bit. As a consequence of Garner's small object argument (see \ref{cgthm}), there is an awfs on $\Top$ such that the left class of its underlying wfs consists of the cofibrations for the Quillen model structure. It is well-known that the pushout of cofibrations is not always a cofibration. For example, the vertical maps of 
\begin{equation}\label{cofpushout}\xymatrix{ D^{n+1} \ar@{=}[d] & {*} \ar[d]^j \ar[l] \ar@{=}[r] & {*} \ar@{=}[d] \\ D^{n+1} & S^n \ar[r] \ar[l]_(.4){j_{n+1}} & {*}}\end{equation} are all cofibrations and coalgebras in the Quillen model structure,\footnote{The arrow $j$ inherits its cofibration structure as a pushout of the generating cofibration $j_n$ as shown $\raisebox{.25in}{\xymatrix{ S^{n-1} \ar[d]_{j_n} \ar@{}[dr]|(.8){\ulcorner}\ar[r]^u & {*} \ar[d]^j \\ D^n \ar[r]_v & S^n}}$. Explicitly, if $c_n \colon D^n \ra Qj_n$ gives $j_n$ its coalgebra structure, then the cone $(Cj, Q(u,v) \cdot c_n)$ gives $j$ its coalgebra structure, where $Q$ is the functor accompanying the functorial factorization of this awfs.\label{pushfootnote}} but the pushout $D^{n+1} \twoheadrightarrow S^{n+1}$ is not. This tells us that one of the squares of (\ref{cofpushout}) is not a map of coalgebras, and furthermore there are no coalgebra structures for the vertical arrows such that both squares are maps of coalgebras.

By contrast, the pushout of \begin{equation}\label{cofpushout2}\xymatrix{D^n \ar[d]_{i_N} & S^{n-1} \ar[d]^{j_n} \ar[r] \ar[l]_{j_n} & {*} \ar[d]^j \\ S^{n} & D^n \ar@{->>}[r] \ar[l]_{i_S} & S^n}\end{equation} is a cofibration and a coalgebra because all three vertical arrows have a coalgebra structure and the squares of (\ref{cofpushout2}) preserve them. (The maps $i_N, i_S \colon D^n \ra S^n$ include the disk as the northern or southern hemisphere of the sphere.) Of course, this fact could be deduced directly because the pushout $S^n \ra S^n \vee S^n$ is an inclusion of a sub-CW-complex, but in more complicated examples this technique for detecting cofibrations will prove useful.
\end{ex}

\subsection{Composing algebras and coalgebras}\label{compssec}

Unlike the situation for ordinary monads on arrow categories, the category of algebras for the monad of an awfs $(\L,\R)$ can be equipped with a canonical composition law, which is natural in a suitable ``double categorical'' sense, described below. Furthermore, the comultiplication for the comonad $\L$ can be recovered from this composition, so one can recognize an awfs by considering only the category $\Ralg$ together with its natural composition law. Later, in Section \ref{laxcolaxdefnssec}, we will extend this recognition principle to morphisms between awfs. In concrete applications, this allows us to ignore the category $\Lalg$, which we'll see can be a bit of a pain.

In this section, we give precise statements of these facts and describe their proofs. Their most explicit appearance in the literature is \cite[\S 2]{garnerhomomorphisms}, but see also \cite[\S A]{garnerunderstanding} or \cite[\S 6.3]{garnercofibrantly}. The dual statements also hold.

Recall that when $\R$ is a monad from an awfs $(\L,\R)$, an $\R$-algebra structure for an arrow $f$ has the form $(s,1) \colon Rf \Ra f$; accordingly, we write $(f,s)$ for the corresponding object of $\Ralg$. Let $(f,s), (f',s') \in \Ralg$. We say a morphism $(u,v) \colon f \Ra f'$ in $\M^{\bf 2}$ is a map of algebras (with the particular algebra structures $s$ and $s'$ already in mind) when $(u,v)$ lifts to a morphism $(u,v) \colon (f,s) \Ra (f',s')$ in $\Ralg$. It follows from the definition that this holds exactly when $s' \cdot E(u,v)= u \cdot s$, where $E \colon \M^{\bf 2} \ra \M$ is the functor accompanying the functorial factorization of $(\L,\R)$. This condition says that the top face of the following cube, which should be interpreted as a map from the algebra depicted on the left face to the algebra on the right face, commutes.
$${\small \xymatrix{ \cdot \ar[rrr]^{E(u,v)} \ar[dr]^s \ar[dd]_{Rf} & & & \cdot \ar[dr]^{s'} \ar'[d][dd]^{Rf'} \\ & \cdot \ar[rrr]^(.4)u \ar[dd]^(.3)f & & & \cdot \ar[dd]^{f'} \\ \cdot \ar@{=}[dr] \ar'[r][rrr]_v & & & \cdot \ar@{=}[dr] \\ & \cdot \ar[rrr]_v & & & \cdot}}$$

\begin{defn} Let $(f,s), (g,t) \in \Ralg$ with $\cod f = \dom g$. Then $gf$ has a canonical $\R$-algebra structure $$\xymatrix{E(gf) \ar[r]^-{\d_{gf}} & EL(gf) \ar[rr]^-{E(1, t\cdot E(f,1))} & & Ef \ar[r]^-s & \dom f}$$
where $\d \colon E \Ra EL$ is the natural transformation arising from the comultiplication of the comonad $\L$.
\end{defn}

Write $(g,t)\bullet(f,s) = (gf, t \bullet s)$ for this composition operation. It is natural in the following sense.

\begin{lem}\label{complem} Let $(u,v) \colon (f,s) \Ra (h,s')$ and $(v,w) \colon (g,t) \Ra (k,t')$ be morphisms in $\Ralg$. Then $(u,w) \colon (gf,t \bullet s) \Ra (kh, t' \bullet s')$ is a map of $\R$-algebras.
$$\xymatrix@C=20pt@R=20pt{\cdot \ar[r]^u \ar[d]_f & \cdot \ar[d]^h \\ \cdot \ar[r]_v \ar[d]_g & \cdot \ar[d]^k \\ \cdot \ar[r]_w & \cdot}$$
\end{lem}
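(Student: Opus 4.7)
The plan is to verify directly the algebra-map equation $(t' \bullet s') \cdot E(u,w) = u \cdot (t \bullet s)$, expanding both sides using the definition $t \bullet s = s \cdot E(1, t \cdot E(f,1)) \cdot \d_{gf}$. The tools at my disposal are naturality of the natural transformation $\d \colon E \Ra EL$, functoriality of the factorisation functor $E \colon \M^{\bf 2} \ra \M$, and the two algebra-map hypotheses $s' \cdot E(u,v) = u \cdot s$ and $t' \cdot E(v,w) = v \cdot t$.

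First I would apply naturality of $\d$ at the morphism $(u,w) \colon gf \Ra kh$ to rewrite $\d_{kh} \cdot E(u,w)$ as $EL(u,w) \cdot \d_{gf}$; since $L(u,w) = (u, E(u,w))$, this equals $E(u, E(u,w)) \cdot \d_{gf}$. I would then collapse the two consecutive applications of $E$ by functoriality, combining $E(1, t' \cdot E(h,1)) \cdot E(u, E(u,w))$ into $E$ of a single morphism $L(gf) \Ra h$ whose top component is $u$ and whose bottom component is $t' \cdot E(h,1) \cdot E(u,w)$.

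The pivotal step is the identity $(h,1) \cdot (u,w) = (v,w) \cdot (f,1)$ of morphisms $gf \Ra k$ in $\M^{\bf 2}$, which is forced by the equation $vf = hu$ coming from the square $(u,v) \colon f \Ra h$. Applying $E$ yields $E(h,1) \cdot E(u,w) = E(v,w) \cdot E(f,1)$, after which the algebra-map hypothesis for $(v,w)$ replaces $t' \cdot E(v,w)$ by $v \cdot t$. A symmetric factorisation $(u, v \cdot t \cdot E(f,1)) = (u,v) \cdot (1, t \cdot E(f,1))$ and the algebra-map hypothesis for $(u,v)$ then produce the desired right-hand side $u \cdot s \cdot E(1, t \cdot E(f,1)) \cdot \d_{gf} = u \cdot (t \bullet s)$.

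The argument is entirely formal and the main obstacle is the bookkeeping; the one non-mechanical observation is spotting the factorisation $(h,1) \cdot (u,w) = (v,w) \cdot (f,1)$ in $\M^{\bf 2}$, which is precisely what splices the two algebra-map hypotheses together. In particular, I will not need the comonad comultiplication beyond the naturality of $\d$, nor the monad multiplication, nor the distributive law condition — consistent with the view that the composition law $\bullet$ and its naturality live already at the level of the pointed endofunctor $L$ together with the comultiplicative natural transformation $\d$.
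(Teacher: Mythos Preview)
Your proof is correct and is precisely the ``easy diagram chase'' the paper alludes to but does not write out: you use naturality of $\d$, functoriality of $E$, and the two algebra-map hypotheses in exactly the way intended, with the equality $(h,1)\cdot(u,w) = (v,w)\cdot(f,1)$ in $\M^{\bf 2}$ as the hinge. Your closing observation that only naturality of $\d$ (not the full comonad axioms, the monad multiplication, or the distributive law) is needed here is also accurate and worth noting.
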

\begin{proof}
The proof is an easy diagram chase.
\end{proof}

\begin{rmk}
It follows from Lemma \ref{complem} that algebras for a monad arising from an awfs $(\L,\R)$ form a (strict) double category {\bf Alg}$\R$: objects are objects of $\M$, horizontal arrows are morphisms in $\M$, vertical arrows are $\R$-algebras, and squares are morphisms of algebras. The content of Lemma \ref{complem} is that morphisms of algebras can be composed vertically as well as horizontally. It remains to check that composition of algebras is strictly associative, but this is a straightforward exercise.
\end{rmk}

Lemma \ref{complem} has a converse, which provides a means for recognizing awfs from categories of algebras.

\begin{thm}[Garner]\label{compcharthm} Suppose $\R$ is a monad on $\M^{\bf 2}$ over $\cod \colon \M^{\bf 2} \ra \M$. Specifying a natural composition law on $\Ralg$ is equivalent to specifying an awfs $(\L,\R)$ on $\M$.
\end{thm}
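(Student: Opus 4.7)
The forward implication is essentially Lemma \ref{complem} supplemented by strict associativity of the operation $t \bullet s$, which is a direct diagram chase using coassociativity of $\vec{\d}$ and the monad axioms for $\R$. The substantive content is the converse: given a monad $\R$ on $\M^{\bf 2}$ over $\cod$ and a natural composition on $\Ralg$, one must construct a comonad structure $\vec{\d} \colon L \Ra L^2$ on the pointed endofunctor $(L,\vec{\e})$ canonically attached to $\R$ and show that the resulting pair $(\L,\R)$ is an awfs.

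For the construction, since $\L$ must be a comonad over $\dom$, the comultiplication $\vec{\d}_f$ has identity as its domain component and so reduces to a natural family of maps $\d_f \colon Ef \to ELf$. The key observation is that for every $f$, the free algebras $(R(Lf),\mu_{Lf})$ and $(Rf,\mu_f)$ are vertically composable, since $\cod R(Lf) = Ef = \dom Rf$. The natural composition therefore supplies an $\R$-algebra structure $\tau_f$ on the composite arrow $Rf \cdot R(Lf) \colon E(Lf) \to \cod f$. Because $Rf \cdot R(Lf) \cdot L^2f = Rf \cdot Lf = f$, there is a canonical morphism $(L^2f, 1) \colon f \Ra Rf \cdot R(Lf)$ in $\M^{\bf 2}$; under the free-algebra adjunction this corresponds to a unique algebra morphism $(Rf,\mu_f) \to (Rf \cdot R(Lf), \tau_f)$, whose codomain component I would take to be $\d_f$. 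Naturality of $\vec{\d}$ then follows from the naturality of $\vec{\mu}$ together with the naturality of the composition law.

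It remains to verify the counit, coassociativity, and distributive law axioms. The counit identity $RLf \cdot \d_f = 1_{Ef}$ drops out of the unit law for vertical composition (composing with an identity algebra is trivial). Coassociativity is obtained by comparing the two ways of composing the triple of free algebras $(R(L^2 f),\mu_{L^2f})$, $(R(Lf),\mu_{Lf})$, $(Rf,\mu_f)$ and invoking associativity of the natural composition. The distributive law $\d \cdot \mu = \mu_L \cdot E(\d,\mu) \cdot \d_R$ follows from applying naturality of the composition to the canonical morphism of algebras $\vec{\mu}_f \colon (R^2 f, \mu_{Rf}) \Ra (Rf, \mu_f)$, in combination with the coassociativity just established.

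The main obstacle is verifying that the two constructions --- from awfs to composition via Lemma \ref{complem}, and from composition to awfs via the procedure above --- are mutually inverse. In one direction, unwinding the explicit composition formula $t \bullet s = s \cdot E(1, t \cdot E(f,1)) \cdot \d_{gf}$ in the case of two free algebras must be seen to reproduce the very $\d_{gf}$ used to build the composition. In the other, the extracted composition determined by a given natural composition must coincide with it on all (not merely free) algebra pairs; this is forced by the universal property of the free algebras together with naturality. Both checks reduce to diagram chases in $\M^{\bf 2}$ using only the monad axioms and the double-category axioms of the composition, and once they are in hand the theorem follows.
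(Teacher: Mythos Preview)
Your approach is essentially the paper's: define $\d_f$ via the composite of the free algebras $(RLf,\mu_{Lf})$ and $(Rf,\mu_f)$, expressed as the adjunct of $(L^2f,1)\colon f \Ra Rf\cdot RLf$ under the monadic adjunction, and then verify the comonad and distributive-law axioms by diagram chase. Your outline of which axiom follows from which property of the composition (counit from unit law, coassociativity from associativity, distributive law from naturality applied to $\vec{\mu}_f$) is more explicit than the paper's terse reference to \cite{garnerhomomorphisms}, but the underlying argument is the same.

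One slip: the adjunct $(Rf,\mu_f) \to (Rf\cdot RLf,\tau_f)$ is a morphism in $\M^{\bf 2}$ of the form $(\d_f,1)$, so $\d_f$ is its \emph{domain} component, not its codomain component (the codomain component is the identity on $\cod f$, as it must be since $\R$ is over $\cod$). Concretely, the adjunct is $(\tau_f,1)\circ R(L^2f,1)$, whose domain part is $\tau_f \cdot E(L^2f,1) = (\mu_f \bullet \mu_{Lf}) \cdot E(L^2f,1)$, matching the paper's explicit formula for $\d_f$.
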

\begin{proof}
Because $\R$ is a monad over cod, the components of its unit define a functorial factorization on $\M$ (see the beginning of Section \ref{awfsssec}). In particular, the functor $L$ and counit $\vec{\e}$ have already been determined. It remains to define $\d \colon E \Ra EL$ so that $\vec{\d} = (1,\d) \colon L \Ra L^2$ makes $\L = (L,\vec{\e},\vec{\d})$ into a comonad satisfying the distributive law with respect to $\R$.

Given a natural composition law on the category of $\R$-algebras and a morphism $f \in \M$, we define $\d_f \colon Ef \ra ELf$ to be $$\d_f := \xymatrix@C=30pt{ Ef \ar[r]^-{E(L^2f,1)} & E(Rf \cdot RLf)  \ar[r]^-{\mu_f \bullet \mu_{Lf}} & ELf},$$ where $\mu_f \bullet \mu_{Lf}$ is the algebra structure for the composite of the free algebras $(RLf,\mu_{Lf})$ and $(Rf,\mu_f)$. Equivalently, $\d_f$ is defined to be the domain component of the adjunct to the morphism 
$$\xymatrix{ \cdot \ar[d]_f \ar[r]^{L^2f} & \cdot \ar[d]^{Rf \cdot RLf = U(Rf\cdot RLf, \mu_f \bullet \mu_{Lf})} \\ \cdot \ar@{=}[r] & \cdot}$$ 
with respect to the (monadic) adjunction $\xymatrix{ \Ralg \ar@<-1ex>[r]_-U \ar@{}[r]|-{\perp} & \M^{\bf 2} \ar@<-1ex>[l]_-F}.$ 

By taking adjuncts of the unit and associativity conditions for a comonad, it is easy to check that such $\d$ makes $\L$ a comonad. The distributive law can be verified using the fact that $\mu_f \bullet \mu_{Lf}$ is, as an algebra structure, compatible with the multiplication for the monad $\R$. We leave the verification of these diagram chases to the reader; see also \cite[Proposition 2.8]{garnerhomomorphisms}. 
\end{proof}

\subsection{Cofibrantly generated awfs}\label{cgssec}

There are a few naturally occurring examples of awfs where the familiar functorial factorizations for some wfs underlie a comonad and a monad. One toy example is the so-called ``graph'' factorization of an arrow through the product of its domain and codomain. There are more serious examples, including the wfs from the Quillen model structure on {\bf Ch}$_R$ and the folk model structure on {\bf Cat}. However, the examples topologists find in nature are less obviously ``algebraic,'' and consequently awfs have not generated a lot of interest among topologists. Recently, Garner has developed a variant of Quillen's small object argument, modeled upon a familiar transfinite construction from category theory, that produces \emph{cofibrantly generated} awfs. In any cocomplete category satisfying an appropriate smallness condition, general enough to include the desired examples, Garner's small object argument can be applied in place of Quillen's, and the resulting awfs have the same underlying wfs as those produced by the usual small object argument. The functorial factorizations are different but also arguably better than Quillen's in that the objects constructed are somehow ``smaller'' (in the sense that superfluous ``cells'' are not multiply attached) and also the transfinite process by which they are constructed actually converges, rather than terminating arbitrarily at some chosen ordinal. Furthermore, Garner's small object argument can be run for a generating small category, not merely for generating sets, a generalization whose power will become apparent in Section \ref{levelsec}.

In this section, we explain in detail the defining properties of \emph{cofibrantly generated} awfs, produced by Garner's small object argument. A more detailed overview of his construction is given in Section \ref{levelsec}, where it will first be needed. See also  \cite{garnercofibrantly} or \cite{garnerunderstanding}.

First, we extend the notation $(-)^{\boxslash}$ to categories over $\M^{\bf 2}$, as opposed to mere sets of arrows.

\begin{defn}\label{boxslashdefn} We define a pair of functors $$\xymatrix{ (-)^{\boxslash}\colon \mathrm{{\bf CAT}}/\M^{\bf 2} \ar@<1ex>[r] \ar@{}[r]|(.47){\perp} & (\mathrm{{\bf CAT}}/\M^{\bf 2})^{\op} \colon {}^{\boxslash}(-) \ar@<1ex>[l] }$$ that are mutually right adjoint. If $\J$ is a category over $\M^{\bf 2}$, the objects of $\J^{\boxslash}$ are pairs $(g,\phi)$, where $g$ is an arrow of $\M$ and $\phi$ is a \emph{lifting function} that assigns each square $\raisebox{.25in}{\xymatrix{\cdot \ar[d]_j \ar[r]^u & \cdot \ar[d]^g \\ \cdot \ar[r]_v & \cdot}}$ with $j \in \J$ a lift $\phi(j,u,v)$ that makes the usual triangles commute. We also require that $\phi$ is coherent with respect to morphisms in $\J$. Explicitly, given $(a,b) \colon j' \Ra j$ in $\J$, we require that $\phi(j', ua, vb) = \phi(j,u,v) \cdot b$, which says that the triangle of lifts in the diagram below commutes. $$\xymatrix{\cdot \ar[d]_{j'} \ar[r]^a & \cdot \ar[d]_(.35)j \ar[r]^u & \cdot \ar[d]^g \\ \cdot \ar[r]_b \ar@{-->}[urr] & \cdot \ar[r]_v \ar@{-->}[ur] & \cdot}$$

Morphisms $(g, \phi) \ra (g', \phi')$ of $\J^{\boxslash}$ are arrows in $\M^{\bf 2}$ that preserve the lifting functions. The category $\J^{\boxslash}$ is equipped with an obvious forgetful functor to $\M^{\bf 2}$ that ignores the lifting data.  When $\J$ is a set, the image of $\J^{\boxslash}$ under this forgetful functor is the set $\J^{\boxslash}$ defined in Section \ref{wfsssec}.
\end{defn}

Garner provides two definitions of a cofibrantly generated awfs \cite{garnerunderstanding}, though his terminology more closely parallels the theory of monads. An awfs $(\L,\R)$ is \emph{free} on a small category $J\colon \J \ra \M^{\bf 2}$ if there is a functor \begin{equation}\label{freediag}\xymatrix@!0@C=40pt{\J \ar[rr]^-{\lambda} \ar[dr]_J & & \Lalg \ar[dl]^{U} \\ & \M^{\bf^2} &}\end{equation} that is initial with respect to morphisms of awfs among functors from $\J$ to categories of coalgebras of awfs. A stronger notion is of an \emph{algebraically-free} awfs, for which we require that the composite functor \begin{equation}\label{algfreeiso}\Ralg \sr{\mathrm{lift}}{\lra} (\Lalg)^{\boxslash} \sr{\lambda^{\boxslash}}{\lra} \J^{\boxslash}\end{equation} is an isomorphism of categories. The functor ``lift'' uses the algebra and coalgebra structures of $\R$-algebras and $\L$-coalgebras to define lifting functions via the construction of \ref{chosenlift}. The isomorphism (\ref{algfreeiso}) should be compared with the isomorphism of sets $\cR \cong \J^{\boxslash}$, which is the usual notion of a cofibrantly generated wfs $(\cL, \cR)$.

We will say that the awfs produced by Garner's small object argument are \emph{cofibrantly generated}. Garner proves that these awfs are both free and algebraically-free; we will find occasion to use both defining properties. 

\begin{thm}[Garner]\label{cgthm} Let $\M$ be a cocomplete category satisfying either of the following conditions. \begin{itemize} \item[$(*)$] Every $X \in \M$ is $\a_X$-presentable for some regular cardinal $\a_X$.   \item[$(\dagger)$] Every $X \in \M$ is $\a_X$-bounded with respect to some proper, well-copowered  orthogonal factorization system on $\M$, for some regular cardinal $\a_X$. \end{itemize} Let $J \colon \J \ra \M^{\bf 2}$ be a category over $\M^{\bf 2}$, with $\J$ small. Then the free awfs on $\J$ exists and is algebraically-free on $\J$. 
\end{thm}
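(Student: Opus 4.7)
The plan is to construct the free awfs via a transfinite small object argument, modified so that the resulting factorization comes equipped with monad and comonad structures rather than merely pointed endofunctor structure, and so that the monad's algebra category reproduces $\J^{\boxslash}$ on the nose. By Theorem \ref{compcharthm}, specifying an awfs over $\cod$ is equivalent to specifying a monad $\R$ on $\M^{\bf 2}$ over $\cod$ together with a natural composition law on $\Ralg$. Hence it suffices to produce a monad $\R$ whose algebra category is canonically isomorphic to $\J^{\boxslash}$ (which already carries an evident composition law from composing lifting functions); the initial functor $\lambda \colon \J \ra \Lalg$ sending $j \in \J$ to its free $\L$-coalgebra will then provide the freeness property.

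First I would construct a ``step-zero'' endofunctor $R_1$ by a modification of the classical pushout-of-coproducts: for each $f \in \M^{\bf 2}$, instead of forming the coproduct of the domains and codomains of $j \in \J$ indexed by the \emph{set} of squares $(u,v) \colon j \Ra f$, I would form a coequalizer indexed by \emph{morphisms} in $\J$. Concretely, given $(a,b) \colon j' \Ra j$ in $\J$ and a square $(u,v) \colon j \Ra f$, the induced square $(ua, vb) \colon j' \Ra f$ should be identified with the cell attached via $(u,v)$ composed with $b$; this is exactly the coherence axiom in Definition \ref{boxslashdefn}. Push out the resulting coequalizer along $\dom f$ to obtain $E_1 f$, and factor $f$ through $E_1 f$. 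Then iterate transfinitely: at successor stages $\b+1$, apply the step-zero construction to the right factor $R_{\b} f$ produced previously; at limit stages, take the colimit. The smallness hypothesis $(*)$ or $(\dagger)$ guarantees that the domains of arrows in $\J$ see this chain as eventually constant, producing a fixed point at some ordinal $\g$ where $R_{\g} \ra R_{\g+1}$ is an isomorphism. At such a fixed point, the universal property of the coequalizers built into each stage automatically supplies a multiplication $\vec{\mu} \colon R^2 \Ra R$ making $R$ into a monad $\R$ over $\cod$.

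The remainder of the argument is verification. Unwinding the coequalizer-pushout defining $Eg$, an $\R$-algebra structure $(s,1) \colon Rg \Ra g$ amounts to a section of $Rg$ that respects the multiplication, which in turn amounts to a choice of lift $\phi(j,u,v)$ for every square $(u,v) \colon j \Ra g$ satisfying the coherence $\phi(j', ua, vb) = \phi(j,u,v) \cdot b$ for $(a,b) \colon j' \Ra j$ in $\J$. This yields the required isomorphism $\Ralg \cong \J^{\boxslash}$, i.e.~algebraic-freeness. Freeness on $\J$ then follows formally: any morphism of awfs from $(\L,\R)$ to some $(\L',\R')$ is determined by its induced functor on algebra categories, which via $\Ralg \cong \J^{\boxslash}$ is in turn determined by the composite $\J \sr{\lambda}{\ra} \Lalg \ra \L'\text{-}\mathrm{\bf coalg}$.

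I expect the main obstacle to be twofold. First, verifying that the coequalizer-modified small object argument really does produce a monad (not just a pointed endofunctor) requires showing that the step-zero construction is itself a reflection onto a certain category of ``algebras for a copointed endofunctor,'' so that the iteration converges as a free monad construction; the naive Quillen small object argument fails precisely here. Second, convergence under $(\dagger)$ is considerably more delicate than under $(*)$: rather than exploiting presentability directly, one uses the orthogonal factorization system to factor each cell-attachment map, and argues that the relevant images stabilize in size at an ordinal bounded by $\a_X$, well-copoweredness providing the needed smallness bound. The verification of the distributive law between the resulting comonad $\L$ and monad $\R$ is then a formal consequence of the compatibility of $\vec{\mu}$ with the composition law on $\Ralg$, using Theorem \ref{compcharthm}.
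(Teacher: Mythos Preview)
The paper does not prove this theorem at all: it is stated as a result of Garner and attributed to \cite{garnercofibrantly,garnerunderstanding}, with the remark ``We won't define all these terms here.'' A sketch of the construction appears later in \S\ref{soassec} and the universal property is revisited in \S\ref{cobssec}, but no proof is given. So there is no ``paper's own proof'' to compare against; your proposal should be read as an attempt to reconstruct Garner's argument.

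That said, your outline diverges from Garner's actual strategy in a way that creates a genuine gap. Garner's construction, as summarized in \S\ref{soassec} and \S\ref{cobssec}, proceeds by reflecting $\J$ along a chain of functors $\CAT/\M^{\bf 2} \leftarrow \Cmd(\M^{\bf 2}) \leftarrow \LAWFS(\M) \leftarrow \AWFS(\M)$: first form the density comonad $L^0 = \mathrm{Lan}_J J$ (your coequalizer-of-coproducts is exactly this), then reflect to a comonad over $\dom$ via the pushout (\ref{steponeeq}), and finally apply Kelly's free-monoid construction in the two-fold monoidal category $\FunF(\M)$ to produce the awfs. The comonad is present at every stage; the monad emerges only at the end. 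Freeness is then immediate because the construction is literally a reflection.

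Your plan instead builds the monad $\R$ first and invokes Theorem \ref{compcharthm} to recover $\L$ from the composition law on $\Ralg \cong \J^{\boxslash}$. This is a legitimate way to obtain \emph{some} awfs, but it does not obviously yield the \emph{free} one: Theorem \ref{compcharthm} tells you that a monad-with-composition determines an awfs, not that the resulting awfs has a universal property among awfs receiving a functor from $\J$. Your final sentence (``Freeness\ldots follows formally'') asserts that a morphism of awfs is determined by its effect on algebra categories, but morphisms of awfs are defined via the \emph{comonad} side (Definition \ref{morawfsdefn} and the universal property in (\ref{freediag})), and you have not shown that your $\L$, reconstructed after the fact from $\R$, enjoys the required initiality among functors $\J \to \Lalg$. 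To close this gap you would need to prove that the unit $\J \to \Lalg$ arising from your construction is initial, which essentially forces you back through Garner's reflection argument.
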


We won't define all these terms here. What's important is to know that the categories of interest satisfy one of these two conditions. Locally presentable categories, such as $\sSet$, satisfy $(*)$. {\bf Top}, {\bf Haus}, and {\bf TopGp}  all satisfy $(\dagger)$. We say a category $\M$ \emph{permits the small object argument} if it is cocomplete and satisfies either $(*)$ or $(\dagger)$.

\begin{rmk}This notion of cofibrantly generated is broader than the usual one --- see Example \ref{trivmodelstrex} for a concrete example --- as ordinary cofibrantly generated wfs are generated by a set of maps, rather than a category. We will refer to this as the ``discrete case'', discrete small categories being simply sets.
\end{rmk}

As is the case for ordinary wfs, cofibrantly generated awfs behave better than generic ones.  We conclude this introduction with an easy lemma, which will prove vital to proofs in later sections.

\begin{lem}\label{retractlem} If an awfs $(\L,\R)$ on $\M$ is cofibrantly generated, then the class $\mathcal{R}$ of arrows that admit an $\R$-algebra structure is closed under retracts.
\end{lem}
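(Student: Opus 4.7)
The plan is to exploit the algebraically-free characterization of cofibrantly generated awfs, which identifies $\Ralg$ with the category $\J^{\boxslash}$ of pairs $(g,\phi)$ consisting of an arrow together with a coherent lifting function against the generating category $\J$. Once we translate the question into this language, we can mimic the familiar proof that in any ordinary wfs, the class of arrows with the RLP against a fixed family is closed under retracts.

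First I would fix a retract diagram
$$\xymatrix{ \cdot \ar[r]^u \ar[d]_g & \cdot \ar[d]^f \ar[r]^{u'} & \cdot \ar[d]^g \\ \cdot \ar[r]_v & \cdot \ar[r]_{v'} & \cdot }$$
with $u' u = 1$, $v' v = 1$, where $f$ admits an $\R$-algebra structure. Via the isomorphism (\ref{algfreeiso}), $f$ corresponds to an object $(f, \psi) \in \J^{\boxslash}$. Given a lifting problem $(a,b) \colon j \Ra g$ with $j \in \J$, I would paste it with the retract to produce the problem $(ua, vb) \colon j \Ra f$, apply $\psi$, and post-compose with $u'$ to define
$$\phi(j,a,b) := u' \cdot \psi(j, ua, vb).$$
A short check using $u'u = 1$, $v'v = 1$ and naturality of the retract shows the two triangles in the lifting problem for $g$ commute.

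Next I would verify that $\phi$ is coherent with respect to morphisms in $\J$: given $(a',b')\colon j' \Ra j$ in $\J$, the coherence of $\psi$ gives $\psi(j', uaa', vbb') = \psi(j, ua, vb)\cdot b'$, and post-composing with $u'$ yields $\phi(j', aa', bb') = \phi(j,a,b)\cdot b'$, exactly the required coherence condition from Definition \ref{boxslashdefn}. Thus $(g,\phi)$ is an object of $\J^{\boxslash}$, and transporting back across the isomorphism $\Ralg \cong \J^{\boxslash}$ endows $g$ with an $\R$-algebra structure.

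There is no real obstacle here; the whole point is that the defect noted in Remark \ref{endofunctorrmk} (algebra structures for the monad need not transfer along retracts) disappears once we have access to the algebraically-free description, because lifting functions obviously do transfer along retracts. The mild bookkeeping is only checking coherence, and this is immediate from the coherence of $\psi$ together with the fact that the retract $u,u',v,v'$ sits \emph{outside} the $\J$-variable.
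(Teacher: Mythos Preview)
Your proof is correct and essentially identical to the paper's own argument: both use the algebraically-free isomorphism $\Ralg \cong \J^{\boxslash}$, transfer the lifting function along the retract by pre- and post-composing with the retract maps, and then verify the coherence condition directly. The only differences are notational.
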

\begin{proof}
When $(\L,\R)$ is generated by $\J$, we have an isomorphism of categories $\Ralg \cong \J^{\boxslash}$ over $\M^{\bf 2}$. The forgetful functor $U\colon \Ralg \ra \M^{\bf 2}$ sends $(g, \phi) \in \J^{\boxslash}$ to $g$. We wish to show that its image is closed under retracts. Suppose $h$ is a retract of $g$ as shown $$\xymatrix{\cdot \ar[d]_h \ar[r]^{i_1} & \cdot \ar[d]_g \ar[r]^{r_1} & \cdot \ar[d]^h \\ \cdot \ar[r]_{i_2} & \cdot \ar[r]_{r_2} & \cdot}$$ Define a lifting function $\psi$ for $h$ by $$\psi(j,u,v) := r_1 \cdot \phi(j, i_1\cdot u, i_2\cdot v).$$ The equations from the retract diagram show that $\psi$ is indeed a lifting function. It remains to check that $\psi$ is coherent with respect to morphisms $(a,b) \colon j' \Ra j$ of $\J$. We compute $$\psi(j', u\cdot a, v\cdot b) = r_1 \cdot \phi(j', i_1\cdot u\cdot a, i_2\cdot v\cdot b) = r_1\cdot \phi(j, i_1\cdot u, i_2\cdot v) \cdot b = \psi(j, u,v) \cdot b,$$ as required.
\end{proof}

The upshot of Lemma \ref{retractlem} is that every arrow in the right class of the ordinary wfs $(\overline{\cL}, \overline{\cR})$ underlying a cofibrantly generated awfs $(\L,\R)$ has an $\R$-algebra structure. When our awfs is cofibrantly generated, we emphasize this result by writing $(\overline{\cL},\cR)$ for the underlying wfs. We will also refer to a lifting function $\phi$ associated to an element of $g \in \J^{\boxslash}$ as an algebra structure for $g$, in light of (\ref{algfreeiso}) and this result.

\begin{rmk} Garner proves the discrete version of Lemma \ref{retractlem} in \cite{garnerunderstanding}: when the generating category $\J$ is discrete, $\mathcal{R}$ is closed under retracts and the wfs $(\overline{\mathcal{L}}, \mathcal{R})$ is cofibrantly generated in the usual sense by this set of maps. As a consequence, the new notion of ``cofibrantly generated'' agrees with the usual one, in the case where they ought to overlap. 
\end{rmk}

As a final note, the composition law for the algebras of a cofibrantly generated awfs is particularly easy to describe using the isomorphism (\ref{algfreeiso}).

\begin{ex}\label{compex} Suppose $(\L,\R)$ is an awfs on $\M$ generated by a category $\J$. Suppose $(f,\phi), (g,\psi) \in \J^{\boxslash} \cong \Ralg$ are composable objects, i.e., suppose $\cod\, f = \dom\, g$. Their canonical composite is $(gf, \psi \bullet \phi)$ where $$\psi \bullet \phi(j,a,b) := \phi(j, a, \psi(j, f\cdot a, b)),$$ and this is natural in the sense described by Lemma \ref{complem}.
\end{ex}

In the remaining sections, we will present new results relating awfs to model structures, taking frequent advantage of the machinery provided by Garner's small object argument.

\section{Algebraic model structures}\label{algsec}

The reasons that most topologists care (or should care) about weak factorizations systems is because they figure prominently in model categories, which are equipped with an interacting pair of them. Using Garner's small object argument, whenever these wfs are cofibrantly generated, they can be algebraicized to produce awfs. This leads to the question: is there a good notion of an \emph{algebraic} model structure? What is the appropriate definition?

Historically, model categories arose to enable computations in the homotopy category defined for a pair $(\M, \W)$, where $\W$ is a class of arrows of $\M$ called the \emph{weak equivalences} that one would like to manipulate as if they were isomorphisms. But with all of the subsequent development of the theory of model categories, this philosophy that the weak equivalences should be of primary importance is occasionally lost. With this principle in mind, the author has decided that an algebraic model structure is something one should give a \emph{pair} $(\M,\W)$, rather than a category $\M$; that is to say, one ought to have a particular class of weak equivalences in mind already. This suggests the following ``minimalist'' definition. 

\begin{defn}\label{algmodelstr} An \emph{algebraic model structure} on a pair $(\M, \W)$, where $\M$ is a complete and cocomplete category and $\W$ is a class of morphisms satisfying the 2-of-3 property, consists of a pair of awfs $(\CC_t,\FF)$ and $(\CC,\FF_t)$ on $\M$ together with a morphism of awfs $$\xi \colon (\CC_t,\FF) \ra (\CC,\FF_t)$$ such that the underlying wfs of $(\CC_t,\FF)$ and $(\CC,\FF_t)$ give the trivial cofibrations, fibrations, cofibrations, and trivial fibrations, respectively, of a model structure on $\M$, with weak equivalences $\W$. We call $\xi$ the \emph{comparison map}.
\end{defn}

The comparison map $\xi$ gives an algebraic way to regard a trivial cofibration as a cofibration and a trivial fibration as a fibration. We will say considerably more about this in a moment. 

Let $\C_t$ denote the underlying class of maps with a $\CC_t$-coalgebra structure and define $\C$, $\F_t$, and $\F$ likewise. By definition $(\ol{\C_t}, \ol{\F})$ and $(\ol{\C}, \ol{\F_t})$ are the underlying wfs of $(\CC_t, \FF)$ and $(\CC,\FF_t)$, respectively, where the bar denotes retract closure. The triple $(\ol{C}, \ol{\F}, \W)$ arising from an algebraic model structure gives a model structure on $\M$ in the ordinary sense; we call this the \emph{underlying ordinary model structure} on $\M$.

We say that an algebraic model structure is \emph{cofibrantly generated} if the two awfs are cofibrantly generated, in the sense described in Section \ref{cgssec}. In this case, $\F = \overline{\F}$ and $\F_t = \overline{\F_t}$ by Lemma \ref{retractlem}.

It is convenient to have notation for the two functorial factorizations. Let $Q = \cod\, C = \dom\, F_t$ be the functor $\M^{\bf 2} \ra \M$ accompanying the functorial factorization of $(\CC,\FF_t)$, i.e., the functor that picks out the object that an arrow factors through. Let $R$ be the analogous functor for $(\CC_t, \FF)$. This notation is meant to suggest cofibrant and fibrant replacement, respectively.

With this notation, the comparison map $\xi\colon (\CC_t,\FF) \ra (\CC, \FF_t)$ consists of natural arrows $\xi_f$ for each $f \in \M^{\bf 2}$ such that \begin{equation}\label{compmap}\xymatrix{ & \dom\, f \ar[dl]_{C_t f} \ar[dr]^{C f} & \\ Rf \ar[rr]^{\xi_f} \ar[dr]_{F f} && Qf \ar[dl]^{F_t f} \\ & \cod\, f &}\end{equation} commutes. Because $\xi$ is a morphism of awfs, it induces functors $$\xi_* \colon \Ctalg \ra \Calg \quad \text{and} \quad \xi^*\colon \Ftalg \ra \Falg,$$ which provide an algebraic way to regard a trivial cofibration as cofibration and a trivial fibration as a fibration. These maps have the following property. Given a lifting problem between a trivial cofibration $j$ and a trivial fibration $q$, there are two natural ways to solve it: regard the trivial cofibration as a cofibration and use the awfs $(\CC,\FF_t)$ or regard the trivial fibration as a fibration and use the awfs $(\CC_t, \FF)$.\footnote{The map $\xi$ assigns $\CC$-coalgebra structures to $\CC_t$-coalgebras. Similarly, $\xi$ maps the trivial cofibrations which are merely coalgebras for the pointed endofunctor underlying $\CC_t$ to coalgebras for the pointed endofunctor of $\CC$, which we saw in Remark \ref{endofunctorrmk} suffices to construct lifts (\ref{chosenlift}), which have the naturality property of (\ref{twolifts}). Similar remarks apply to the fibrations.}
In figure (\ref{twolifts}) below, the former option solves the lifting problem $(u,v) \colon j \Ra q$ by tracing the path around the middle of the back of the cube, while the latter option traces along the front of the cube. Naturality of $\xi$ says that the lifts constructed by each method are the same!

\begin{equation}\label{twolifts}\xymatrix@C=8pt@R=6pt{&& & & & & \cdot \ar[ddddrrrr]^u \ar[dd]_(.7){Cj} \\ {}\\
&&& & & & \cdot \ar'[dddrrr]|(.55){Q(u,v)}[ddddrrrr] \ar[dd]_(.7){F_tj} \\ {} \\
\cdot \ar@{=}[rr] \ar@{ >->}_j^{\sim}[dddd] &&\cdot \ar@{=}[uuuurrrr] \ar[ddddrrrr]^(.75)u \ar[dd]_{C_tj} & & & & \cdot \ar'[ddrr]_v'[dddrrr][ddddrrrr]  & & & & \cdot \ar[dd]^(.4){Cq} \ar@{=}[rr] & & \cdot \ar@{->>}^q_{\sim}[dddd] \\ & & & & & & &&& & \\
&&\cdot \ar'[ur][uuuurrrr]|(.4){\xi_j} \ar[ddddrrrr]|{R(u,v)} \ar[dd]_(.6){Fj}  & & & & & & & &  \cdot \ar[dd]^{F_tq} \ar[uurr]_t \\ & & & &&& & &&&  \\
\cdot \ar@{=}[rr] \ar[uurr]^s &&\cdot \ar@{=}'[ur]'[uurr][uuuurrrr] \ar[ddddrrrr]_v & & & & \cdot \ar@{=}[uuuurrrr]  \ar[dd]^(.3){C_tq} & & & & \cdot \ar@{=}[rr] & & \cdot \\ \\
&&&&&& \cdot \ar[uuuurrrr]|(.6){\xi_q} \ar[dd]^(.3){Fq}  \\ \\
&&& & & & \cdot \ar@{=}[uuuurrrr] }\end{equation}

\subsection{Comparing fibrant-cofibrant replacements}\label{fibcofsec} 

Any algebraic model structure induces a \emph{fibrant replacement monad} $\R$ and a \emph{cofibrant replacement comonad} $\Q$ on the category $\M$ (as opposed to the arrow category $\M^{\bf 2}$ on which the monads and comonads of the awfs act). The monad $\R$ arises as follows. The category $\M$ includes into $\M^{\bf 2}$ by sending an object $X$ to the unique arrow from $X$ to the terminal object. This inclusion is a section to the functor $\dom \colon \M^{\bf 2} \ra \M$. Because the monad $\FF$ is a monad over cod, it induces a monad $\R= (R, \eta, \mu)$ on $\M$ which we call the \emph{fibrant replacement monad}. The functor $R$ is obtained from the previous functor $R \colon \M^{\bf 2} \ra \M$ accompanying the functorial factorization of $(\CC_t,\FF)$ by precomposing $R$ by this inclusion.  We regret that our notation is somewhat ambiguous. The domain of $R$ should be apparent from whether an object in the image of $R$ is the image of an object or arrow of $\M$.  The arrows in the image of the two functors are related as follows: $Rf = R(f, 1_*)$, where $1_*$ denotes the identity at the terminal object. 

Dually, we can include $\M$ into $\M^{\bf 2}$ by slicing under the initial object. Using this inclusion, the comonad $\CC$ induces a comonad $\mathbb{Q} = (Q, \e, \d)$ on $\M$ which we call the \emph{cofibrant replacement comonad}. Once again, the functor $Q \colon \M \ra \M$ is obtained from the previous functor $Q\colon \M^{\bf 2} \ra \M$ by precomposing $Q$ by this inclusion.  Algebras for $\R$ are called \emph{algebraically fibrant objects} and coalgebras for $\Q$ are called \emph{algebraically cofibrant objects}. 

Another application of the natural lift illustrated in (\ref{twolifts}) is in comparing fibrant-cofibrant replacements of an object. Let $\M$ be a category with an algebraic model structure and let $X \in \M$. We can define its fibrant-cofibrant replacement to be either $RQX$ or $QRX$, both of which are weakly equivalent to $X$. Classically, there is no natural comparison between these choices, but in any algebraic model structure there is a natural arrow $RQX \ra QRX$ built out of the comparison map together with the components of the awfs.

\begin{lem}\label{fibcoflem} Let $\M$ be a category with an algebraic model structure and let $R$ and $Q$ be the induced fibrant and cofibrant replacement on $\M$. Then there is a canonical natural transformation $\chi \colon RQ \Ra QR$.
\end{lem}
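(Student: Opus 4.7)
The plan is to produce $\chi_X\colon RQX\to QRX$ as a canonical lift in a suitable square, using the comparison map $\xi$ to reconcile the two awfs. Specifically, consider
$$\xymatrix{QX \ar[r]^{Q(\eta_X)} \ar[d]_{\eta_{QX}} & QRX \ar[d]^{\e_{RX}} \\ RQX \ar[r]_{R(\e_X)} & RX.}$$
The left vertical $\eta_{QX}$ is the trivial-cofibration half of the factorization $QX\to *$ provided by $(\CC_t,\FF)$, so by Remark \ref{chosenrmk} it carries a free $\CC_t$-coalgebra structure; dually $\e_{RX}$ is the trivial-fibration half of $\emptyset\to RX$ provided by $(\CC,\FF_t)$ and carries a free $\FF_t$-algebra structure. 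The square commutes: naturality of $\eta\colon\id\Ra R$ at $\e_X$ and naturality of $\e\colon Q\Ra\id$ at $\eta_X$ both identify the two composites with $\eta_X\cdot\e_X$.

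Next, I invoke the comparison map to fill in a diagonal. On the one hand, $\xi_*\colon\Ctalg\to\Calg$ converts the $\CC_t$-coalgebra structure of $\eta_{QX}$ into a $\CC$-coalgebra structure, which may be paired with the underlying $\FF$-algebra structure on $\e_{RX}$ obtained through $\xi^*\colon\Ftalg\to\Falg$; the formula of (\ref{chosenlift}) then supplies a lift. Symmetrically, one may instead leave $\eta_{QX}$ as a $\CC_t$-coalgebra and promote $\e_{RX}$ through $\xi^*$ to an $\FF$-algebra, producing another candidate lift. The defining property of a morphism of awfs --- the commutative triangle (\ref{morff}) of Definition \ref{morawfsdefn}, which is exactly what is depicted geometrically in (\ref{twolifts}) --- forces the two lifts to agree, and I define $\chi_X$ to be this common diagonal.

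Naturality of $\chi$ in $X$ is then immediate. For $f\colon X\to Y$, the four boundary maps of the lifting square are natural in $X$, being built from the functors $Q$ and $R$ and the natural transformations $\eta$ and $\e$; and the free coalgebra and algebra structures on $\eta_{QX}$ and $\e_{RX}$ are natural in $X$ because they are components of the comultiplication of $\CC_t$ and the multiplication of $\FF_t$ at the functorially varying arrows $QX\to *$ and $\emptyset\to RX$. Since the lift formula $t\cdot E(u,v)\cdot s$ of (\ref{chosenlift}) is functorial with respect to morphisms of $\CC_t$-coalgebras and $\FF_t$-algebras, the naturality square for $\chi$ automatically commutes. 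The only non-bookkeeping step in the argument is the appeal to the morphism-of-awfs axioms to reconcile the two candidate diagonals; this is precisely why the comparison map $\xi$ was built into Definition \ref{algmodelstr}, and it is what makes $\chi$ genuinely canonical.
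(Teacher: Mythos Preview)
Your approach is essentially the paper's: set up the square between the free $\CC_t$-coalgebra $\eta_{QX}$ and the free $\FF_t$-algebra $\e_{RX}$, then use the comparison map to produce a canonical diagonal, appealing to the compatibility encoded in (\ref{twolifts}) to see that the two possible lifts agree. The paper first obtains the horizontal maps $Q\eta_X$ and $R\e_X$ as solutions to auxiliary lifting problems and then identifies them with the functorial images after a diagram chase; you go directly to the functorial description and verify commutativity of the square by naturality of $\eta$ and $\e$. That shortcut is fine.

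There is one slip in your description of the two candidate lifts. In your first option you convert $\eta_{QX}$ to a $\CC$-coalgebra via $\xi_*$ \emph{and} convert $\e_{RX}$ to an $\FF$-algebra via $\xi^*$; but a $\CC$-coalgebra is lifted against an $\FF_t$-algebra, not an $\FF$-algebra, so this pairing belongs to neither awfs. The two genuine options are: (a) use $(\CC,\FF_t)$, regarding $\eta_{QX}$ as a $\CC$-coalgebra via $\xi_*$ and leaving $\e_{RX}$ with its free $\FF_t$-algebra structure; or (b) use $(\CC_t,\FF)$, leaving $\eta_{QX}$ with its free $\CC_t$-coalgebra structure and regarding $\e_{RX}$ as an $\FF$-algebra via $\xi^*$. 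Your second option is (b), and once you correct the first to (a) the rest of the argument goes through exactly as you wrote it.
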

\begin{proof}
Classically, one obtains a map $RQX \ra QRX$ by first lifting $i$ against $q$ and $j$ against $p$, as in the figure on the left below. Because the maps $i$, $j$, $p$, and $q$ are all obtained by factoring, they have free coalgebra or algebra structures for the awfs $(\CC_t, \FF)$ or $(\CC,\FF_t)$. Thus, each of these lifting problems has a natural solution (see Remark \ref{chosenrmk}). After a diagram chase, we can write the solution to the first lifting problem as $Q\eta_X$ and the second as $R\e_X$, using the unit and counit of the monad $\R$ and the comonad $\mathbb{Q}$.

$$\xymatrix@R=20pt@C=25pt{ & \emptyset \ar@{ >->}[dl]_i \ar@{ >->}[dr] & \\ QX \ar@{ >->}[dd]_{\eta_{QX}=j}^{\sim} \ar@{->>}[dr]_{\sim}^{\e_X} \ar@{-->}[rr]^{Q\eta_X} & & QRX \ar@{->>}[dd]^{q=\e_{RX}}_{\sim} & & QX \ar@{ >->}[dd]^{\sim}_{\eta_{QX}=j} \ar[rr]^{Q\eta_X} & & QRX \ar@{->>}[dd]_{\sim}^{q=\e_{RX}} 
 \\ & X \ar@{ >->}[dr]_{\sim}^{\eta_X} & \\ RQX \ar@{->>}[dr] \ar@{-->}[rr]_{R \e_X} & & RX \ar@{->>}[dl]^p & & RQX \ar@{-->}[uurr]^{\chi_X} \ar[rr]_{R \e_X} & & RX
 \\ & {*} & }$$

The arrows $Q\eta_X$ and $R\e_X$ present a lifting problem between $j$ and $q$ that can be solved naturally using either awfs, as depicted in figure (\ref{twolifts}). The solutions to these lifting problems are the components of a natural transformation $RQ \Ra QR$ comparing the two fibrant-cofibrant replacements.
\end{proof}

This natural map is particularly well-behaved; hence the following theorem.

\begin{thm}\label{bifibthm}
The functor $Q$ lifts to a cofibrant replacement comonad on the category $\Ralg$ of algebraically fibrant objects. Dually, the functor $R$ lifts to a fibrant replacement monad on the category $\Qalg$ of algebraically cofibrant objects. The Eilenberg-Moore categories for this lifted comonad and lifted monad are isomorphic and give a notion of ``algebraically bifibrant objects.''
\end{thm}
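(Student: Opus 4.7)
The plan is to recognize $\chi \colon RQ \Ra QR$ as a \emph{mixed distributive law} of the monad $\R = (R,\eta,\mu)$ over the comonad $\Q = (Q,\e,\d)$ on $\M$; the theorem then follows from the general theory of such distributive laws. Explicitly, such a distributive law determines a lift $\tilde{\Q}$ of $\Q$ to a comonad on $\Ralg$ whose value on an algebraically fibrant object $(X,\rho)$ is $(QX,\, Q\rho \cdot \chi_X)$, with counit and comultiplication inherited pointwise from $\Q$; dually a lift $\tilde{\R}$ of $\R$ to a monad on $\Qalg$; and a canonical isomorphism $(\Ralg)^{\tilde{\Q}} \cong (\Qalg)^{\tilde{\R}}$. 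The common Eilenberg-Moore category consists of triples $(X, \rho, \gamma)$ with $(X,\rho) \in \Ralg$ and $(X,\gamma) \in \Qalg$ satisfying $\gamma \cdot \rho = Q\rho \cdot \chi_X \cdot R\gamma$; we designate this the category of algebraically bifibrant objects.

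The two unit axioms, $\chi \cdot \eta Q = Q\eta$ and $\e R \cdot \chi = R\e$, are immediate from the construction of $\chi$ in Lemma \ref{fibcoflem}: they encode the commutativity of the two triangles asserting that $\chi_X$ solves the lifting problem $(Q\eta_X, R\e_X) \colon \eta_{QX} \Ra \e_{RX}$.

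The main obstacle is the verification of the two remaining axioms, $\chi \cdot \mu Q = Q\mu \cdot \chi R \cdot R\chi$ and $\d R \cdot \chi = Q\chi \cdot \chi Q \cdot R\d$. The idea is to exploit that $\chi_X$ is not an arbitrary lift but the canonical one determined by the free $\CC_t$-coalgebra structure on $\eta_{QX}$ and the free $\FF_t$-algebra structure on $\e_{RX}$, together with the naturality of the comparison map $\xi$. For each axiom, I would argue that both sides are solutions to a common lifting problem whose left-hand arrow is a free coalgebra and whose right-hand arrow is a free algebra; appealing to the compatibility of the canonical lifts (\ref{chosenlift}) with the comultiplication and multiplication on free factorizations recorded in Remark \ref{chosenrmk}, together with the agreement of the two routes in diagram (\ref{twolifts}) that comes from $\xi$ being a morphism of awfs, both sides reduce to the same composite built from the comultiplication of the comonad of $(\CC_t,\FF)$, the comparison map $\xi$, and the multiplication of the monad of $(\CC,\FF_t)$.

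Once $\chi$ has been shown to be a distributive law, verifying that $\tilde{\Q}$ is a comonad on $\Ralg$ (and dually $\tilde{\R}$ a monad on $\Qalg$) is formal, using the four axioms in turn to check unit and coassociativity at the level of the lifted algebra structures. The isomorphism of Eilenberg-Moore categories is then the classical identification of both $(\Ralg)^{\tilde{\Q}}$ and $(\Qalg)^{\tilde{\R}}$ with the category of $\chi$-compatible pairs described above, completing the proof.
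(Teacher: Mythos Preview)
Your proposal is correct and follows essentially the same strategy as the paper: both reduce the theorem to verifying that $\chi$ is a mixed distributive law of $\R$ over $\Q$ and then invoke the standard theory (the paper cites \cite{powerwatanabecombining}), both observe that the unit/counit axioms are immediate from the defining lifting problem, and both leave the multiplication/comultiplication axioms as a diagram chase driven by the awfs structure and the fact that $\xi$ is a morphism of awfs. Your account is somewhat more explicit about what the distributive law delivers (the formulas for $\tilde{\Q}$, $\tilde{\R}$, and the description of bifibrant objects as $\chi$-compatible pairs), while the paper is slightly more specific about which ingredients enter the chase---namely the internal distributive law $\Delta$ of the awfs $(\CC_t,\FF)$ and the monad-morphism property of $\xi$---but these are differences of emphasis rather than of method.
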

\begin{proof} 
By a well-known categorical result \cite{powerwatanabecombining}, it suffices to find a natural transformation $\chi\colon RQ \Ra QR$ that is a distributive law of the monad over the comonad, i.e., a lax morphisms of monads and a colax morphism of comonads (see Section \ref{algssec}). The natural map of Lemma \ref{fibcoflem} satisfies the desired property: The defining lifting problem shows that $\chi$ is compatible with the unit and counit for $R$ and $Q$. It remains to show that $\chi$ is compatible with the multiplication, i.e., that 
$$\xymatrix@C=1pt{  & & RQR \ar[drr]^{\chi_R} & & \\ R^2Q \ar[urr]^{R\chi} \ar[dr]_{\mu_Q} & & & & QR^2 \ar[dl]^{Q\mu} \\ & RQ \ar[rr]^{\chi} & & QR &}$$ commutes, and a dual condition for the comultiplication. The necessary diagram chase uses the fact that the awfs $(\CC_t,\FF)$ satisfies the distributive law and the fact that the comparison map $\xi$ induces a monad morphism. We leave the remaining details to the reader.
\end{proof}

Unless we are talking about fibrant or cofibrant replacement specifically, $R$ and $Q$ will be functors $\M^{\bf 2} \ra \M$ accompanying the functorial factorizations of an algebraic model structure.

\subsection{The comparison map}\label{comparisonsec}  The least familiar component of the definition of an algebraic model structure given above is the comparison map. In figure (\ref{twolifts}), Lemma \ref{fibcoflem}, and Theorem \ref{bifibthm}, we saw some of its useful properties, but the question remains: in what circumstances might one expect a comparison map to exist? We discuss several answers to this question in this section.

\begin{rmk}\label{comprmk} Let $\J$ be the generating category for the awfs $(\CC_t,\FF)$ and let $(\CC,\FF_t)$ be an awfs on the same category. A comparison map between $(\CC_t, \FF)$ and $(\CC, \FF_t)$ exists if and only if there is a functor $\zeta \colon \J \ra \Calg$ over $\M^{\bf 2}$. This is because Garner's small object argument produces a canonical map $\lambda \colon \J \ra \Ctalg$ in {\bf CAT}/$\M^{\bf 2}$ that is universal among arrows from $\J$ to categories of coalgebras for the left half of an awfs on $\M$, in the sense that every such morphism $\zeta$ factors uniquely as $\xi_* \circ \lambda$, where $\xi$ is a morphism of awfs. See \cite[\S3]{garnerunderstanding}.
\end{rmk}

As far as the author is aware, model category theorists have not written about the issue of comparing the two wfs provided by an ordinary model structure, a fact that first came to her attention through discussions with Martin Hyland. But the existence of such a comparison map is more reasonable than one might expect: Peter May notes \cite{maypontoconcise2} that the universal property of the colimits in Quillen's small object argument gives such a natural transformation, provided we assume that the generating trivial cofibrations $\J$ are contained in the generating cofibrations $\I$.

In many cases, this admittedly untraditional assumption is quite reasonable: the generating trivial cofibrations are of course cofibrations, so including them with the generators does not change the resulting model structure. In the setting of algebraic model structures, this inclusion takes the form of a functor $\J \ra \I$ over $\M^{\bf 2}$, which induces a comparison map by the universal property of the ``free'' awfs generated by $\J$ (see Section \ref{cgssec}). When such a functor exists, ``the'' comparison map always refers to this one, though a priori some other might exist. In some of the results that follow, we require that there be a functor $\J \ra \I$ between the generating categories of a cofibrantly generated algebraic model structure without feeling too badly about it.

The following remark supports our Definition \ref{algmodelstr}.

\begin{rmk} Any ordinary cofibrantly generated model structure on a category that permits the small object argument can be made into an algebraic model structure by replacing the generating cofibrations $\I$ by $\I \cup \J$ and applying Garner's small object argument in place of Quillen's. The underlying ordinary model structure of the resulting algebraic model structure is the same as before, by which we mean that the classes $\overline{\C}$, $\F$, and $\W$ are unchanged. Thus, the abundance of cofibrantly generated model structures (in the ordinary sense) gives rise to an abundance of examples of algebraic model structures, which are then of course cofibrantly generated.
\end{rmk}

While altering the generating cofibrations does not change the underlying model structure, it does change the cofibration-trivial fibration factorization. Given that the generating cofibrations are often more natural than the generating trivial cofibrations,\footnote{Indeed, in many Bousfield localizations, the generating trivial cofibrations are not known explicitly.} we provide the following alternative method for obtaining a comparison map for a cofibrantly generated algebraic model structure by altering $\J$ as opposed to $\I$, vis-\`{a}-vis a theorem inspired by \cite[11.2.9]{hirschhornmodel}. As will become clear in the proof below, this method only applies in the case where the trivial cofibrations are generated by a set, as opposed to a category. 

In the following proof even though $\J$ is discrete, we regard it as a category over $\M^{\bf 2}$ with an injection $\J \sr{J}{\ra} \M^{\bf 2}$. With this perspective, we need a technical note. While Garner's small object argument works for any small category $J \colon \J \ra \M^{\bf 2}$ above the arrow category, in practice, the functor $J$ is injective, and we can identify $\J$ with its image and think of it as a set of arrows together with some coherence conditions in the form of morphisms between these arrows. As stated, Theorem \ref{icellprop} requires that $J$ be injective, though one could imagine that more careful wording would allow us to drop this assumption. We chose this simplification because we cannot think of any applications where this restriction is prohibitive.

\begin{thm}\label{icellprop} Suppose $\J$ is a set and $\I$ is a category over $\M^{\bf 2}$ such that the underlying wfs $(\overline{\C_t}, \F)$ and $(\overline{\C}, \F_t)$  of the awfs $(\CC_t,\FF)$ and $(\CC, \FF_t)$ that they generate give a model structure on $(\M, \W)$, in the ordinary sense. Then $\J$ can be replaced by a set $\J'$ over $\M^{\bf 2}$ such that \begin{enumroman} \item There is a functor $\J'^{\boxslash} \ra \J^{\boxslash}$ over $\M^{\bf 2}$, necessarily faithful, that is bijective on the underlying classes of arrows. It  follows that $\J'$ and $\J$ generate the same underlying wfs. \item There is a functor $\zeta\colon \J' \ra \Calg$ over $\M^{\bf 2}$. \end{enumroman} The set $\J'$ generates an awfs $(\CC_t', \FF')$.
It follows from the universal property of the functor $\J' \ra \mathbb{C}_t'\text{-}\mathrm{{\bf coalg}}$ that $\I$ and $\J'$ generate an algebraic model structure on $\M^{\bf 2}$ with the same underlying model structure $(\overline{\C},\F,\W)$.
\end{thm}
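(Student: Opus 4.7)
The plan is to replace each $j \in \J$ by the left factor $Cj$ produced by the $(\CC,\FF_t)$-factorization $j = F_tj \cdot Cj$. Since $F_tj$ is a trivial fibration and $j$ is a trivial cofibration, two-of-three forces $Cj \in \W$, so $Cj$ is itself a trivial cofibration, and it carries its canonical free $\CC$-coalgebra structure $(Cj, \d_j)$ given by the comultiplication of $\CC$. Taking $\J' := \{Cj : j \in \J\}$ as a set indexed by $\J$, the assignment $Cj \mapsto (Cj, \d_j)$ immediately supplies the functor $\zeta : \J' \ra \Calg$ required for (ii).

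The linchpin for (i) is to exhibit $j$ as a retract of $Cj$ in $\M^{\bf 2}$. I would do this by lifting the trivial cofibration $j$ against the trivial fibration $F_tj$ with bottom edge the identity, producing $r_j : \cod j \ra Qj$ satisfying $r_j \cdot j = Cj$ and $F_tj \cdot r_j = 1$; the identity on the top row together with $(r_j, F_tj)$ on the bottom assemble into the required retract diagram in $\M^{\bf 2}$.

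With $r_j$ in hand, I would define the comparison functor $\J'^{\boxslash} \ra \J^{\boxslash}$ over $\M^{\bf 2}$ by sending $(g, \phi')$ to $(g, \phi)$, where $\phi(j, u, v) := \phi'(Cj, u, v \cdot F_tj) \cdot r_j$. A quick calculation using the retract identities verifies that $\phi(j,u,v)$ is a valid lift, and coherence is vacuous since $\J$ is discrete --- this is precisely where the theorem's restriction to $\J$ being a set, rather than a category, becomes essential. Faithfulness over $\M^{\bf 2}$ is then automatic, and bijectivity on underlying arrow classes holds in both directions: the formula produces a lifting function against $\J$ from one against $\J'$, while conversely any fibration admits some lifting function against $\J' \subset \ol{\C_t}$ by lifting each $Cj$ individually (again with no coherence to verify).

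To conclude, let $(\CC_t', \FF')$ denote the awfs cofibrantly generated by $\J'$. The underlying ordinary model structure $(\ol{\C}, \F, \W)$ is unchanged because $\J'$ and $\J$ generate the same underlying wfs by (i). By Remark \ref{comprmk}, the functor $\zeta$ factors uniquely as $\xi_* \circ \lambda'$ for a morphism of awfs $\xi : (\CC_t', \FF') \ra (\CC, \FF_t)$, providing the comparison map. I expect the main obstacle to be the retract trick of paragraph two; once that is in place, everything else reduces to routine bookkeeping about lifting functions in the absence of coherence conditions.
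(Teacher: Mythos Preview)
Your proposal is correct and follows essentially the same approach as the paper's proof: define $\J' = \{Cj : j \in \J\}$ with the free $\CC$-coalgebra structures for (ii), exhibit $j$ as a retract of $Cj$ by lifting $j$ against $F_tj$, and use this retract to transport lifting functions from $\J'^{\boxslash}$ to $\J^{\boxslash}$, with discreteness of $\J$ eliminating all coherence obligations. The paper's argument differs only in notation and in the order of exposition.
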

\begin{proof} Define $\J' \sr{J'}{\ra} \M^{\bf 2}$ to be the composite $\J \sr{J}{\ra} \M^{\bf 2} \sr{C}{\ra} \M^{\bf 2}$ where $C$ is the comonad generated by $\I$. For each $j \in \J$, the corresponding element of $\J'$ is its left factor $Cj$. We claim that $\J' = \{ Cj \mid j \in \J\}$ satisfies conditions (i) and (ii) above. For (ii), define $\zeta$ to be the map that assigns each $Cj$ its canonical free coalgebra structure $(Cj, \d_j)$. 

For (i), note that for each $j \in \J$ the lifting problem $\raisebox{.25in}{\xymatrix{ \cdot \ar[r]^{Cj} \ar[d]_j & \cdot \ar[d]^{F_t j} \\ \cdot \ar@{-->}[ur]^s \ar@{=}[r] & \cdot}}$ has a solution $s$, which gives us a retract diagram $\raisebox{.25in}{\xymatrix{ \cdot \ar@{=}[r] \ar[d]_j & \cdot \ar@{=}[r] \ar[d]^{Cj} & \cdot \ar[d]^j \\ \cdot \ar[r]_s & \cdot \ar[r]_{F_tj} & \cdot}}$. We use this to define a functor $\J'^{\boxslash} \ra \J^{\boxslash}$ as in the proof of Lemma \ref{retractlem}. On objects, define $$ \J'^{\boxslash} \ni (g, \psi) \mapsto (g, \phi) \in \J^{\boxslash},$$ where $\phi(j, u,v ) := \psi( Cj, u, v \cdot F_tj) \cdot s$ for all lifting problems $(u,v)\colon j \Ra g$. Because $\J$ is discrete, the lifting function $\phi$ need not satisfy any coherence conditions. Given a morphism $(h,k) \colon (g, \psi) \ra (g', \psi')$ in $\J'^{\boxslash}$, it follows that \begin{align*} \phi'(j, h\cdot u, k\cdot v) &= \psi'(Cj, h\cdot u, k\cdot v\cdot F_tj) \cdot s \\ &= h \cdot \psi (Cj, u, v \cdot F_t j) \cdot s \\ &= h \cdot \phi(j, u, v),\end{align*} which says precisely that $(h,k) \colon (g, \phi) \ra (g', \phi')$ is a morphism in $\J^{\boxslash}$. So $\J'^{\boxslash} \ra \J^{\boxslash}$ is a functor over $\M^{\bf 2}$.

It remains to show that this functor is surjective on the underlying arrows of $\J'^{\boxslash}$ and $\J^{\boxslash}$. Let $j \in \J$ and $(g, \phi) \in \J^{\boxslash}$; by definition $g \in \F$. By the 2-of-3 property, $Cj \in \C \cap \W \subset \ol{\C_t}$, so $Cj \boxslash g$. As $\J'$ is discrete, any choice of lifts against the $Cj$ can be used to define a lifting function $\psi$ so that $(g, \psi) \in \J'^{\boxslash}$. Of course the functor defined above need not map $(g,\psi)$ to $(g, \phi)$ but it does mean that $g$ is in the image when we forget down to $\M^{\bf 2}$, which is all that we claimed.
\end{proof}

\begin{rmk} We also have a faithful functor $\J^{\boxslash} \ra \J'^{\boxslash}$ over $\M^{\bf 2}$ that is bijective on the underlying classes of arrows; this one, however, takes a bit more effort to define. Define $ \J'' \sr{J''}{\ra} \M^{\bf 2}$ to be the composite $\J' \sr{J'}{\ra} \M^{\bf 2} \sr{C_t}{\ra} \M^{\bf 2}$. We have a functor $\l \colon \J'' \ra \Ctalg$ over $\M^{\bf 2}$ that assigns each arrow its free coalgebra structure. Mirroring the argument above, elements of $\J'$ are retracts of elements of $\J''$, so we have $\J''^{\boxslash} \ra \J'^{\boxslash}$ over $\M^{\bf 2}$. Our desired functor is the composite $$\J^{\boxslash} \cong \Falg \sr{\mathrm{lift}}{\lra} (\Ctalg)^{\boxslash} \sr{\l^{\boxslash}}{\lra} \J''^{\boxslash} \lra \J'^{\boxslash}$$ defined with help from the functor of \ref{boxslashdefn} and the isomorphism (\ref{algfreeiso}). Note that these functors are not inverse equivalences.
\end{rmk}

The upshot is that we can get an algebraic model structure from an ordinary cofibrantly generated model structure without changing the generating cofibrations. This argument does not appear to extend to non-discrete categories $\J$ because, in absence of a comparison map, the $\FF$-algebra structures of the $F_t j$ are \emph{chosen} and not \emph{natural} with respect to morphisms in $\J$; see Remark \ref{chosenrmk}. Note, the proof of Theorem \ref{icellprop} did not require that the awfs $(\CC,\FF_t)$ is cofibrantly generated, though in examples this is typically the case.

In Section \ref{appsec}, we will show that the components of the comparison map in a cofibrantly generated algebraic model structure satisfying additional, relatively mild, hypotheses are themselves $\CC$-coalgebras and hence cofibrations.

\subsection{Algebraic model structures and adjunctions} 

Many cofibrantly generated model structures are produced from previously known ones by passing the generating sets across an adjunction. We repeat this trick for cofibrantly generated algebraic model structures, extending a well-known theorem due to Kan \cite[11.3.2]{hirschhornmodel}.

An adjunction $\xymatrix{ T \colon \M \ar@<1ex>[r] \ar@{}[r]|-{\perp} & \K \colon S \ar@<1ex>[l]}$ lifts to an adjunction on the arrow categories $\M^{\bf 2}$ and $\K^{\bf 2}$, which we also denote by $T \dashv S$.  In particular, a small category $ J \colon \J \ra \M^{\bf 2}$ over $\M^{\bf 2}$ becomes a small category $TJ \colon \J \ra \K^{\bf 2}$ over $\K^{\bf 2}$. Because our notation has usually described the generating category as opposed to its functor to $\M^{\bf 2}$, we write $T\J$ to mean the category $\J$ that maps to $\K^{\bf 2}$ via the functor $TJ$.

\begin{thm}\label{adjthm} Let $\M$ have an algebraic model structure, generated by $\I$ and $\J$ and with weak equivalences $\W_{\M}$. Let $\xymatrix{ T \colon \M \ar@<1ex>[r] \ar@{}[r]|-{\perp} & \K \colon S \ar@<1ex>[l]}$ be an adjunction. Suppose $\K$ permits the small object argument and also that \begin{itemize} \item[$(\star)$]  $S$ maps arrows underlying the left class of the awfs cofibrantly generated by $T\J$ into $\W_{\M}$. \end{itemize} Then $T\J$ and $T\I$ generate an algebraic model structure on $\K$ with $\W_{\K} = S^{-1}(\W_{\M})$. Furthermore, $T \dashv S$ is a Quillen adjunction for the underlying ordinary model structures.
\end{thm}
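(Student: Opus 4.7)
The plan is to adapt the classical Kan lifting theorem \cite[11.3.2]{hirschhornmodel} to the algebraic setting; the two principal extra tasks are to produce the awfs via Garner's small object argument and to construct the algebraic comparison map on $\K$.

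First, apply Garner's small object argument (Theorem \ref{cgthm}) to $T\I$ and $T\J$ over $\K^{\bf 2}$, which is legitimate because $\K$ permits the small object argument; this yields awfs $(\CC_\K, \FF_{t,\K})$ and $(\CC_{t,\K}, \FF_\K)$ on $\K$. Next, verify that $\W_\K := S^{-1}(\W_\M)$ gives an ordinary model structure on the underlying wfs. The class $\W_\K$ inherits 2-of-3 from $\W_\M$ by functoriality of $S$. The adjunction together with the algebraically-free property (\ref{algfreeiso}) shows that $S$ preserves both fibrations and trivial fibrations, since $g \in (T\J)^{\boxslash}$ iff $Sg \in \J^{\boxslash}$, and similarly for $(T\I)^{\boxslash}$. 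Hypothesis $(\star)$ places the underlying left class of $(\CC_{t,\K}, \FF_\K)$ inside $\W_\K$, and the same adjunction argument places the trivial fibrations of $\K$ into $\W_\K$ (their $S$-images are trivial fibrations in $\M$, hence in $\W_\M$). The inclusion of trivial cofibrations into cofibrations follows by showing $T\J$ lifts against the underlying class of $(T\I)^{\boxslash}$, again via the adjunction. For the identification $\overline{\F_\K} \cap \W_\K = \overline{\F_{t,\K}}$, given $g$ a fibration and weak equivalence, $Sg$ is a fibration and a weak equivalence in $\M$, hence $Sg \in \I^{\boxslash}$, so $g \in (T\I)^{\boxslash} = \overline{\F_{t,\K}}$. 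The corresponding identification for cofibrations then follows by the standard retract argument: factor $f \in \overline{\C_\K} \cap \W_\K$ as a trivial cofibration followed by a fibration, use 2-of-3 and the previous step to deduce the fibration is a trivial fibration, then lift $f$ against it to realize $f$ as a retract of the trivial cofibration.

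The most delicate step is the construction of the comparison map $\xi_\K$. By Remark \ref{comprmk}, this amounts to producing a functor $T\J \to \CC_\K\text{-coalg}$ over $\K^{\bf 2}$. The comparison map $\xi_\M$ on $\M$ yields a functor $\zeta_\M := \xi_{*,\M} \circ \lambda_{\J,\M} \colon \J \to \CC_\M\text{-coalg}$, and the task is to transport this along $T$. I expect this to require the change-of-base machinery for Garner's small object argument promised for Section \ref{adjsec}: since $T$ carries the generators $\I$ of $\CC_\M$ to the generators $T\I$ of $\CC_\K$, the adjunction should lift to a functor $\CC_\M\text{-coalg} \to \CC_\K\text{-coalg}$, and then $\zeta_\K := T \circ \zeta_\M$ supplies the required functor, from which $\xi_\K$ is extracted via the universal property of $\lambda_{T\J,\K}$. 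The Quillen adjunction assertion is then immediate, since $T$ sends generators to generators and hence preserves the underlying cofibrations and trivial cofibrations. The main obstacle is thus the construction of $\xi_\K$: the model-structure conditions above are essentially routine adaptations of the classical Kan theorem, but producing the algebraic comparison map on $\K$ out of the one on $\M$ appears to need techniques developed in later sections of the paper.
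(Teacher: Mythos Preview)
Your proposal is correct and follows essentially the same route as the paper: apply Garner's small object argument to $T\J$ and $T\I$, verify the model structure axioms via the adjunction bijection $(T\J)^{\boxslash} \leftrightarrow \J^{\boxslash}$ (with Lemma~\ref{retractlem} implicitly in play), and defer the construction of the comparison map to the change-of-base result Corollary~\ref{Tthm} from Section~\ref{adjsec}, exactly as the paper does. The only notable difference is that the paper derives $\overline{\C_t} \subset \overline{\C}$ from the comparison map itself (giving the stronger coalgebra-level inclusion), whereas you obtain it via the adjunction by showing $(T\I)^{\boxslash} \subset (T\J)^{\boxslash}$; both suffice for the underlying model structure, and the paper cites \cite[11.3.1]{hirschhornmodel} in place of spelling out the retract argument you give for the fourth inclusion.
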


In the literature, $(\star)$ is known as the \emph{acyclicity condition} because the arrows underlying the left class of the awfs generated by $T\J$ are the proposed \emph{acyclic} (trivial) cofibrations for the model structure on $\K$.

\begin{proof}[Proof of Theorem \ref{adjthm}]
By the small object argument, $T\J$ and $T\I$ generate awfs $(\CC_t,\FF)$ and $(\CC,\FF_t)$ with underlying wfs $(\ol{\C_t}, \F)$ and $(\ol{\C}, \F_t)$. With this notation, the condition $(\star)$ says that $S(\C_t) \subset \W_{\M}$. 

When the comparison map of the algebraic model structure on $\M$ arises from a functor $\J \ra \I$ over $\M^{\bf 2}$, composing with $T$ induces a functor $\J \ra \I$ over $\K^{\bf 2}$, which gives the comparison map between the resulting awfs. In the general case, the comparison map on $\M$ specifies a functor $\J \ra \mathbb{C}^{\M}\text{-}\mathrm{{\bf coalg}}$ to the category of coalgebras for the awfs on $\M$ generated by $\I$. In Corollary \ref{Tthm}, a significant result whose proof is deferred to Section \ref{adjsec}, we will prove that there is a functor $\mathbb{C}^{\M}\text{-}\mathrm{\bf coalg} \ra \Calg$ lifting $T$. This gives rise to a functor $\J \ra \Calg$ lifting $T$, or equivalently a functor $T\J \ra \Calg$ over $\K^{\bf 2}$. The comparison map $(\CC_t,\FF) \ra (\CC,\FF_t)$ for $\K$ is then induced by the universal property of the functor $T\J \ra \Ctalg$ produced by Garner's small object argument.

The class $\W_{\K}$ is retract closed by functoriality of $S$. It remains to show that $\overline{\C_t} = \overline{\C} \cap \W_{\K}$ and ${\F_t} = {\F} \cap \W_{\K}$. In fact, by \cite[11.3.1]{hirschhornmodel} we need only verify three of the four relevant inclusions.

The inclusion $\C_t \subset \C$ is immediate, since the comparison map explicitly provides each trivial cofibration with a cofibration structure; taking retract closures $\overline{\C_t} \subset \overline{\C}$. The hypothesis $(\star)$  says that $\C_t \subset \W_{\K}$ and $\W_{\K}$ is retract closed by functoriality of $S$, so $\overline{\C_t} \subset \W_{\K}$. Hence, $\overline{\C_t} \subset \overline{\C} \cap \W_{\K}$.

Similarly, the comparison map guarantees that $\F_t \subset \F$. If $g \in \F_t$ then it has some algebra structure $(g,\phi) \in T\I^{\boxslash}$ by Lemma \ref{retractlem}. By adjunction $(Sg, \phi^{\sharp}) \in \I^{\boxslash}$, where the arrows of $\phi^{\sharp}$ are the adjuncts of the corresponding arrows of $\phi$. So $Sg$ is a trivial fibration for the model structure on $\M$. In particular, $Sg \in \W_{\M}$, which says that $g \in \W_{\K}$. So $\F_t \subset \F \cap \W_{\K}$.

It remains to show that $\F \cap \W \subset {\F_t}$; we will appeal to Lemma \ref{retractlem} on two occasions. Suppose $f \in \F \cap \W_{\K}$. By Lemma \ref{retractlem}, $f$ has some algebra structure $(f,\psi) \in T\J^{\boxslash}$ and by adjunction $(Sf, \psi^{\sharp}) \in \J^{\boxslash}$. As $f \in \W_{\K}$, $Sf$ is a trivial fibration in the algebraic model structure on $\M$; by Lemma \ref{retractlem}, it follows that there is some algebra structure $\zeta$ such that $(Sf, \zeta) \in \I^{\boxslash}$.  By adjunction, $(f, \zeta^{\flat}) \in T\I^{\boxslash}$, where $\zeta^{\flat}$ denotes the adjunct of $\zeta$, which says that $f \in \F_t$, as desired.

The above argument showed that $S$ preserves fibrations and trivial fibrations. Hence, $T \dashv S$ is a Quillen adjunction.
\end{proof}

\subsection{Algebraic Quillen adjunctions}\label{algquillenssec}

Given the close connection between the algebraic model structures of Theorem \ref{adjthm}, it is not surprising that quite a lot more can be said about the nature of the Quillen adjunction between them. This leads to the notion of an \emph{algebraic Quillen adjunction}, of which the adjunction of Theorem \ref{adjthm} will be an example. We preview the definition and corresponding theorem below, but postpone the proofs, which are categorically intensive, to Sections \ref{adjsec} and \ref{algquillensec}. These sections are not dependent on the intermediate material, so a categorically inclined reader may wish to skip there directly.

Morphisms of awfs provide a means of comparing awfs on the same category, but as far as the author is aware, there are no such comparisons for awfs on different categories in the literature. We define three useful types of morphisms precisely in Section \ref{adjsec}, but here are the main ideas. 

Let $(\CC,\FF)$ and $(\L,\R)$ be awfs on $\M$ and $\K$ respectively. A \emph{colax morphism of awfs} $(\CC,\FF) \ra (\L,\R)$ is a functor $T\colon \M \ra \K$ together with a specified lifting of $T$ to a functor $\tilde{T} \colon \Calg \ra \Lalg$ satisfying one additional requirement. By a categorical result \cite{johnstoneadjoint}, the lift $\tilde{T}$ is determined by a characterizing natural transformation; together $T$ and this natural transformation is called a \emph{colax morphism of comonads} or simply a \emph{comonad morphism}. We ask that the natural transformation characterizing $\tilde{T}$ also determines an extension of $T$ to a functor $\hat{T} \colon \Kl(\FF) \ra \Kl(\R)$ between the Kleisli categories of the monads. 

The Kleisli category of a monad $\R$ is the full subcategory of $\Ralg$ on the free algebras, which are the objects in the image of the (monadic) free-forgetful adjunction. $\Kl(\R)$  is initial in the category of adjunctions determining that monad; the Eilenberg-Moore category $\Ralg$ is terminal. At the moment, the only justification we can give for this additional requirement, beyond the fact that it holds in important examples, is that colax morphisms of awfs should interact with both sides of the awfs. A more convincing justification is Lemma \ref{laxawfslem}.

Dually, a \emph{lax morphism of awfs} $(\L,\R) \ra (\CC,\FF)$ is a functor $S\colon \K \ra \M$ together with a specified lift $\tilde{S} \colon \Ralg \ra \Falg$ such that the natural transformation characterizing $\tilde{S}$ must determine an extension of $S$ to a functor between the coKleisli categories of $\L$ and $\CC$. When the functor $S$ or $T$ is the identity, both lax and colax morphisms of awfs are exactly morphisms of awfs, which is another clue that these are reasonable notions.  

Combining these, we arrive at the notion of \emph{adjunction of awfs}, which is the most relevant to this context. An \emph{adjunction of awfs} $(T,S) \colon (\CC,\FF) \rightarrow (\L,\R)$ consists of an adjoint pair of functors $\xymatrix{ T\colon \M \ar@<1ex>[r] \ar@{}[r]|-{\perp} & \K \colon S \ar@<1ex>[l]}$ such that $T \colon (\CC,\FF) \ra (\L,\R)$ is a colax morphism of awfs, $S \colon (\L,\R) \ra (\CC,\FF)$ is a lax morphism of awfs, and the characterizing natural transformations for these morphisms are related in a suitable fashion. 

Adjunctions of awfs over identity adjunctions are exactly morphisms of awfs, with both characterizing natural transformations equal to the natural transformation of Definition \ref{morawfsdefn}. Note, adjunctions of awfs can be canonically composed. We can now define \emph{algebraic Quillen adjunctions}.

\begin{defn}\label{algquilldefn} Let $\M$ have an algebraic model structure $\xi^{\M} \colon (\CC_t,\FF) \ra (\CC,\FF_t)$ and let $\K$ have an algebraic model structure $\xi^{\K} \colon (\L_t,\R) \ra (\L, \R_t)$. An adjunction $\xymatrix{ T \colon \M \ar@<1ex>[r] \ar@{}[r]|-{\perp} & \K \colon S \ar@<1ex>[l]}$ is an \emph{algebraic Quillen adjunction} if there exist adjunctions of awfs
$$\xymatrix@C=50pt{  (\CC_t, \FF) \ar[dr]|-{(T,S)} \ar[r]^{(T,S)} \ar[d]_{\xi^{\M}} & (\L_t,\R) \ar[d]^{\xi^{\K}} \\ (\CC,\FF_t) \ar[r]_{(T,S)}   & (\L,\R_t) }$$ such that both triangles commute.
\end{defn}

Note the left adjoint of an adjunction of awfs preserves coalgebras and hence (trivial) cofibrations and dually the right adjoint preserves algebras and hence (trivial) fibrations. In particular, an algebraic Quillen adjunction is a Quillen adjunction, in the ordinary sense.  As we shall prove in Section \ref{algquillensec}, the naturality condition of the definition of algebraic Quillen adjunction is equivalent to the condition that the lifts depicted below commute.
\begin{equation}\label{natliftdiag}{\small\xymatrix@R=15pt@C=5pt{ \Rtalg \ar[rr]^{\tilde{S}_t} \ar[dr]^{(\xi^{\K})^*}  \ar[dd] & & \Ftalg \ar[dr]^{(\xi^{\M})^*} \ar'[d][dd] 
 & & & \Ctalg \ar[dd] \ar[rr]^{\tilde{T}_t} \ar[dr]^{(\xi^{\M})_*} & & \Ltalg \ar'[d][dd] \ar[dr]^{(\xi^{\K})_*}
\\ & \Ralg \ar[dl] \ar[rr]^(.4){\tilde{S}} & & \Falg \ar[dl] & \text{and} & & **[l]\Calg \ar[dl] \ar[rr]^(.4){\tilde{T}} & & **[l]\Lalg \ar[dl] \\ \K^{\bf 2} \ar[rr]_S & & \M^{\bf 2} & & & \M^{\bf 2} \ar[rr]_T & & \K^{\bf 2} &}}\end{equation}
Similarly, the corresponding extensions to Kleisli and coKleisli categories commute. 

Somewhat surprisingly due to the numerous conditions required by their components, algebraic Quillen adjunctions exist in familiar situations.

\begin{thm}\label{adjthm5} 
Let $\xymatrix{ T \colon \M \ar@<1ex>[r] \ar@{}[r]|-{\perp} & \K \colon S \ar@<1ex>[l]}$ be an adjunction. Suppose $\M$ has an algebraic model structure, generated by $\I$ and $\J$, with comparison map $\xi^{\M}$. Suppose $\K$ has the algebraic model structure, generated by $T\I$ and $T\J$, with canonical comparison map $\xi^{\K}$. Then $T \dashv S$ is canonically an algebraic Quillen adjunction. 
\end{thm}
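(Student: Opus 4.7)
The plan is to assemble the required diagram of adjunctions of awfs from the two horizontal adjunctions of awfs and the two vertical comparison maps, and then to verify that the two triangles commute by an appeal to the change-of-base universal property announced at the end of Section \ref{adjsec}.

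First I would construct the two horizontal adjunctions of awfs. Apply the main theorem of Section \ref{adjsec} (the existence of an adjunction of awfs between a cofibrantly generated awfs and the one generated by the image of its generators under a left adjoint) twice: once to the pair of categories $\I$ on $\M^{\bf 2}$ and $T\I$ on $\K^{\bf 2}$, obtaining an adjunction of awfs $(T,S)\colon(\CC,\FF_t)\to(\L,\R_t)$; and once to $\J$ and $T\J$, obtaining $(T,S)\colon(\CC_t,\FF)\to(\L_t,\R)$. The underlying adjunction of ordinary functors is in each case the lifted adjunction on arrow categories, so the two horizontal arrows of the square in Definition \ref{algquilldefn} both have $T\dashv S$ as underlying adjunction, which is what is needed for the notion of algebraic Quillen adjunction to even make sense. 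The vertical arrows $\xi^{\M}$ and $\xi^{\K}$ are morphisms of awfs, and hence adjunctions of awfs over the identity adjunction, by the remark following Definition \ref{algquilldefn}.

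Next I would verify that the two triangles commute. Because adjunctions of awfs compose canonically, the two composites $\xi^{\K}\circ(T,S)$ and $(T,S)\circ\xi^{\M}$ are adjunctions of awfs $(\CC_t,\FF)\to(\L,\R_t)$, both with underlying adjunction $T\dashv S$. To show they agree, I would invoke the recognition result from Section \ref{compssec}: an adjunction of awfs is determined by the colax morphism half (equivalently, by the induced functor on coalgebra categories), so it suffices to prove equality of the two functors $\Ctalg\to\Lalg$ obtained by going around the square in either direction. This is where the change-of-base corollary (Corollary \ref{adjawfscor}) enters: since $(\CC_t,\FF)$ is cofibrantly generated by $\J$, the functor $\Ctalg\to\Lalg$ under consideration is determined, by the universal property of Garner's small object argument on $\K$, by its restriction along $\l\colon\J\to\Ctalg$. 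The comparison map $\xi^{\M}$ was itself obtained (via Remark \ref{comprmk}) from a functor $\J\to\Calg$, which after composing with the lift $\Calg\to\Lalg$ of $T$ provided by the Section \ref{adjsec} machinery yields the functor $T\J\to\Lalg$ that by Garner's universal property defines $\xi^{\K}$. Tracing around the square in both directions reduces to composing the same three pieces of data in two associations, so the two induced functors on coalgebras coincide, and hence so do the two adjunctions of awfs. The second triangle is handled symmetrically by replacing $\J$ with itself considered as mapping into $\Ctalg$ versus $\Calg$.

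The main obstacle I expect is the second step, because it is not obvious that the functors and natural transformations characterizing the two composite adjunctions of awfs agree on the nose rather than merely up to isomorphism; the recognition principle from Section \ref{compssec}, which reduces an adjunction of awfs to its colax half and further reduces that to data on the generating category via the universal property of Garner's construction, is exactly what makes the verification tractable. I also expect that one must be careful to cite $(\star)$-type hypotheses only insofar as they are needed to know that the awfs on $\K$ exists in the first place; here that is given by assumption, since $\K$ is assumed to carry the algebraic model structure generated by $T\I$ and $T\J$, so no acyclicity condition must be independently verified.
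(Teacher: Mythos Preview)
Your approach is essentially the paper's own: construct the horizontal adjunctions of awfs via Theorem \ref{adjawfsthm}, regard the comparison maps as adjunctions of awfs over the identity, and then show the two composites $(\CC_t,\FF)\to(\L,\R_t)$ agree by reducing to the induced functors on coalgebra categories and invoking the extended universal property of the unit $\l^{\M}$.

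Two points need tightening. First, the result you call ``Corollary \ref{adjawfscor}'' is not the one you need. Corollary \ref{adjawfscor} only produces adjunctions of awfs; it says nothing about uniqueness. The universal property you actually invoke---that a colax morphism of awfs out of a cofibrantly generated awfs is determined by its restriction along $\l^{\M}\colon\J\to\Ctalg$---is Theorem \ref{cobthm}, the change-of-base extension of Garner's universal property from morphisms of awfs to adjunctions of awfs. (The universal property is that of $\l^{\M}$ on $\M$, not ``on $\K$.'') Moreover, to carry out the restriction argument you need Corollary \ref{natunitcor}, which says $\tilde{T}_t\circ\l^{\M}=\l^{\K}$; without this the left square of the paper's rectangle does not commute, and your ``same three pieces of data in two associations'' does not go through. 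You should make this step explicit.

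Second, ``the second triangle is handled symmetrically'' is not quite right. There is only one equation to check---that the two composites around the square agree---and this is a single statement about mates. Once the colax (coalgebra) side commutes, the lax (algebra) side follows formally by taking mates, as in (\ref{natofnat2}); this is how the paper concludes, via Lemma \ref{odlem}, rather than by a separate symmetric argument.
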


The proof is deferred to Section \ref{algquillensec}.

\section{Pointwise awfs and the projective model structure}\label{levelsec}

One of the features of an awfs that is not true of an ordinary wfs or even of a functorial wfs is that an awfs on a category $\M$ induces an awfs on the diagram category $\M^{\A}$ for any small category $\A$, where the factorizations are defined pointwise. The comultiplication and multiplication maps are precisely what is needed to define natural transformations that ensure that the left and right factors have the desired lifting properties. Furthermore, and completely unlike the non-algebraic situation, such \emph{pointwise awfs} are cofibrantly generated if the original awfs is. After proving this result, we will give an example of a class of cofibrantly generated algebraic model structures whose underlying ordinary model structures are not cofibrantly generated in the classical sense. 

We then construct a projective algebraic model structure on $\M^{\A}$ from a cofibrantly generated algebraic model structure on $\M$. The awfs in the projective model structure will not be the pointwise awfs, though these awfs will make an appearance in the proof establishing this model structure.

We first take a detour to describe Garner's small object argument in more detail, as these details will be used in the proofs in this section and the next.

\subsection{Garner's small object argument}\label{soassec}

Like Quillen's, Garner's small object argument produces a functorial factorization through a colimiting process that takes many steps, a key difference being that the resulting functorial factorization canonically underlies an awfs. Each step gives rise to a functorial factorization in which the left functor is a comonad. At the final step, the right functor is also a monad.

At step zero, Quillen's small object argument forms a coproduct over all squares from the generating cofibrations to the arrow $f$. In Garner's small object argument, this coproduct is replaced by a left Kan extension of the functor $J : \J \ra \M^{\bf 2}$ along itself. Write $L^0f = \mathrm{Lan}_J J(f)$ for the step zero comonad, called the density comonad in the literature. When $\J$ is discrete, $L^0f$ is the usual coproduct. In the general case, this arrow is a quotient of the usual coproduct.

The step-one factorization of both small object arguments is obtained the same way: by factoring the counit of the density comonad as a pushout followed by a square with an identity arrow on top. \begin{equation}\label{steponeeq}\xymatrix{ \cdot \ar[d]_{L^0f} \ar[r] \ar@{}[dr]|(.8){\ulcorner}& \cdot \ar@{=}[r] \ar[d]^{L^1f} & \cdot \ar[d]^f \\ \cdot \ar[r] & \cdot \ar[r]_{R^1f} & \cdot}\end{equation} Concretely, $L^1f$ is the pushout of $L^0f$ along the canonical arrow from the domain of $L^0f$ to the domain of $f$. The arrow $f$ and the arrow from the codomain of $L^0f$ to the codomain of $f$ form a cone under this pushout diagram; the unique map given by the universal property is $R^1f$. By the universal property of the pushout (\ref{steponeeq}), specifying an $(R^1,\vec{\eta}^1)$-algebra structure for $f$ is equivalent to specifying a lifting function $\phi$ such that $(f,\phi) \in \J^{\boxslash}$. 

The most significant difference between Garner's and Quillen's small object argument appears in the inductive steps that follow. For Quillen's small object argument the above processes are repeated with the arrow $R^{\a}f$ in place of $f$. We take the left Kan extension (coproduct over squares) and then pushout to obtain an arrow that is composed with the preceding left factors to obtain $L^{\a+1}f$. The map induced by the universal property is $R^{\a+1}f$. Transfinite composition is used to obtain $L^{\a}$ and $R^{\a}$ for limit ordinals. We choose to halt this process at some predetermined ``sufficiently large'' ordinal, yielding the final functorial factorization.

For Garner's small object argument, this process is modified to include additional quotienting. At step two and all subsequent steps, the beginning is the same. We pushout $L^0R^1f$ along the canonical arrow to obtain $L^1R^1f$. But then $L^2f$ is defined to be $L^1f$ composed with the coequalizer of two arrows from $L^1f$ to $L^1R^1f$. As in previous steps, this is a quotient of Quillen's definition. In the language of cell complexes, the arrow $L^1R^1f$ freely attaches new ``cells'' to the ``spheres'' in the domain of $R^1f$, while $L^1f$ includes those ``cells'' attached to ``spheres'' in the domain of $f$ into their image in the domain of $R^1f$. The coequalizer then avoids redundancy by identifying those ``cells'' attached to the same ``spheres'' in different stages. 

Unlike Quillen's small object argument, this quotienting means that when the category $\M$ permits the small object argument this process \emph{converges}; there is no need for an artificial termination point. The resulting object through which the arrow $f$ factors is in some sense ``smaller'' than for the factorizations produced by Quillen's small object argument because cells are attached only once, not repeatedly. The monad $\R$ is algebraically-free on the pointed endofunctor $R^1$, which says that $\Ralg \cong (R^1,\vec{\eta}^1)$-alg $\cong \J^{\boxslash}$. 

\subsection{Pointwise algebraic weak factorization systems} 

We now turn our attention to pointwise awfs. Because {\bf CAT} is cartesian closed, we have isomorphisms $(\M^{\A})^{\bf 2} \cong \M^{\A \times {\bf 2}} \cong (\M^{\bf 2})^{\A}$, which we use to regard a natural transformation $\a$ as a functor $\a \colon \A \ra \M^{\bf 2}$. On objects, this functor picks out the constituent morphisms of $\a$; the image of a morphism in $\A$ is the corresponding naturality square. Morphisms $(\phi,\psi):\a \Ra \b$ in the category of functors $\A \ra \M^{\bf 2}$ consist of a pair of morphisms $\phi, \psi$ in $\M^{\A}$ such that the vertical composites $\b \phi$ and $\psi \a$ are equal.

Given an awfs $(\L,\R)$ on $\M$, we use these isomorphisms to define $L^{\A}$ to be the functor $$(\M^{\A})^{\bf 2} \cong (\M^{\bf 2})^{\A} \sr{ (L)_*}{\lra} (\M^{\bf 2})^{\A} \cong (\M^{\A})^{\bf 2}$$ induced by post-composition by $L$; similarly for $R^{\A}$. We define the natural transformations $\vec{\e}^{\A}, \vec{\d}^{\A}, \vec{\eta}^{\A}, \vec{\mu}^{\A}$ that make $L^{\A}$ and $R^{\A}$ into a comonad and monad as follows. Given an object $\a$ of $(\M^{\A})^{\bf 2}$ regarded as a functor $\A\ra \M^{\bf 2}$, the arrow $\vec{\e}^{\A}_{\a}$ is obtained by ``whiskering'' $\vec{\e}$ with $\a$, as depicted below.
$$\xy 
(-24,0)*+{\A}="2";
(-8,0)*+{\M^{\bf 2}}="4"; 
(8,0)*+{\M^{\bf 2}}="6"; 
{\ar^-{\a} "2";"4"};
{\ar@/^1.65pc/^{L} "4";"6"}; 
{\ar@{=}@/_1.65pc/ "4";"6"}; 
{\ar@{=>}^<<<{\scriptstyle \vec{\e}} (0,3)*{};(0,-3)*{}} ; 
\endxy $$
All of the other natural transformations are defined similarly. It is easy to see that $\L^{\A} = (L^{\A}, \vec{\e}^{\A}, \vec{\d}^{\A})$ and $\R^{\A} = (R^{\A}, \vec{\eta}^{\A}, \vec{\mu}^{\A})$ define an awfs because all the definitions are given by simply post-composing a natural transformation with the old comonad and monad.

\begin{rmk} Note however that the underlying wfs of the pointwise awfs $(\L^{\A}, \R^{\A})$ is not itself given pointwise by the underlying wfs of $(\L,\R)$. This is because, unlike the case for the left and right factors, generic pointwise maps will not have natural lifts. This is one area where awfs behave better than ordinary wfs.
\end{rmk} 

\subsection{Cofibrantly generated case} 

Given a cofibrantly generated awfs $(\L,\R)$ on $\M$, is the resulting pointwise awfs $(\L^{\A},\R^{\A})$ on $\M^{\A}$ cofibrantly generated? There are many reasons to suspect that this is not the case. For example, there is an awfs on $\Set$ generated by $\J=\{\emptyset \ra 1\}$ for which the right class is the epimorphisms. The right class of the pointwise awfs on $\Set^{\A}$ consists of epis with a natural section. If this awfs is cofibrantly generated, it means that this class can be characterized by a lifting property. While right lifting properties can be used to specify additional \emph{structure} on a class of maps, they are not typically known to impose \emph{coherence} conditions.

Despite this worry, the answer is yes, the pointwise awfs is always cofibrantly generated when the original one is. In retrospect, the solution to the above concern is obvious: the generating category $\J_{\A}$ for the pointwise awfs will not be discrete (unless $\A$ is)!
This is the first example known to the author where the extra generality allowed in Garner's small object argument is useful.

\begin{thm}\label{cgiawfs} Let $J \colon \J \ra \M^{\bf 2}$ be a small category over $\M^{\bf 2}$, where $\M$ permits the small object argument, and let $(\L,\R)$ be the awfs generated by $\J$. Let $\J_{\A}$ be the category $\A^{\op} \times \J$ equipped with the functor $$\A^{\op} \times \J \sr{y \times J}{\lra} \Set^{\A} \times \M^{\bf 2} \sr{-\cdot-}{\lra} (\M^{\A})^{\bf 2},$$ where $y$ denotes the Yoneda embedding and $-\cdot-$ denotes the copower\footnote{The copower $S\cdot j$ of a set $S$ with an object $j$ of $\M^{\bf 2}$ is the coproduct  of copies of $j$ indexed by the set $S$. When $S$ is instead a set-valued functor, the copower $S \cdot j$ is a natural transformation with each constituent arrow having the description just given.} (tensor) of an arrow in $\M$ by a $\Set$-valued functor from $\A$. Then the pointwise awfs $(\L^{\A},\R^{\A})$ is generated by $\J_{\A}$.
\end{thm}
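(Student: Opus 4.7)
The plan is to verify the algebraically-free condition of Section \ref{cgssec} directly: exhibit an isomorphism $\R^{\A}\text{-}\mathrm{{\bf alg}} \cong \J_{\A}^{\boxslash}$ of categories over $(\M^{\A})^{\bf 2}$ that arises from a canonical functor $\J_{\A} \to \L^{\A}\text{-}\mathrm{{\bf coalg}}$. Both sides will be identified with the functor category $(\Ralg)^{\A}$, using on the one hand the pointwise definition of $\R^{\A}$, and on the other hand a Yoneda/copower calculation that reinterprets lifting functions against $\J_{\A}$.

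First I would unpack the right-hand side. The pointwise awfs has $R^{\A}$ equal to postcomposition by $R$ under the isomorphism $(\M^{\A})^{\bf 2} \cong (\M^{\bf 2})^{\A}$, with unit and multiplication obtained by whiskering the corresponding natural transformations of $\R$ with an object $\a \colon \A \to \M^{\bf 2}$. So an $\R^{\A}$-algebra structure on $\a$ is exactly a family of $\R$-algebra structures $\phi_A$ on $\a(A)$, one for each $A \in \A$, such that for every morphism $f\colon A' \to A$ of $\A$ the naturality square $\a(f)\colon \a(A') \Ra \a(A)$ is a morphism in $\Ralg$. In other words, $\R^{\A}\text{-}\mathrm{{\bf alg}}$ is isomorphic, over $(\M^{\A})^{\bf 2}$, to the functor category $(\Ralg)^{\A}$, where the forgetful functor sends $\hat{\a}\colon \A \to \Ralg$ to $U \circ \hat{\a}\colon \A \to \M^{\bf 2}$.

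Next I would do the same for the left-hand side, using the defining adjunction for the copower. A morphism $(u,v)\colon y(A)\cdot j \Ra \a$ in $(\M^{\A})^{\bf 2}$ is, by the universal property of the copower composed with the Yoneda lemma, the same datum as a morphism $(u_A, v_A)\colon j \Ra \a(A)$ in $\M^{\bf 2}$. Therefore a lifting function $\phi$ for $\a$ against $\J_{\A}$ amounts to a family of lifting functions $\phi_A$, one for each $A \in \A$, each assigning to a square $(u_A,v_A)\colon j \Ra \a(A)$ a lift in $\M$. The coherence conditions of Definition \ref{boxslashdefn} for morphisms in $\J_{\A} = \A^{\op} \times \J$ split in the expected way: coherence with respect to morphisms $(\id_A, \beta)\colon (A,j')\to(A,j)$ forces each $\phi_A$ to be coherent in $\J$, i.e.\ $(\a(A),\phi_A)\in \J^{\boxslash}$, while coherence with respect to morphisms $(f, \id_j)\colon (A',j)\to(A,j)$ for $f\colon A'\to A$ in $\A$ forces $\a(f)$ to preserve lifting functions, i.e.\ to be a morphism of $\J^{\boxslash}$. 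Invoking Garner's isomorphism $\Ralg \cong \J^{\boxslash}$ over $\M^{\bf 2}$, this exhibits $\J_{\A}^{\boxslash}$ as the same functor category $(\Ralg)^{\A}$, over $(\M^{\A})^{\bf 2}$.

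Chaining the two identifications yields an isomorphism $\R^{\A}\text{-}\mathrm{{\bf alg}} \cong \J_{\A}^{\boxslash}$ over $(\M^{\A})^{\bf 2}$. To finish I would exhibit the required functor $\lambda_{\A}\colon \J_{\A} \to \L^{\A}\text{-}\mathrm{{\bf coalg}}$ inducing this isomorphism via (\ref{algfreeiso}): it sends $(A,j)$ to $y(A)\cdot j$ equipped with the coalgebra structure obtained by copowering the canonical $\L$-coalgebra structure on $Lj = \lambda(j)$, with functoriality in $\A^{\op}\times \J$ coming from the functoriality of $y$ and of $\lambda\colon \J \to \Lalg$. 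That the composite $\R^{\A}\text{-}\mathrm{{\bf alg}} \to (\L^{\A}\text{-}\mathrm{{\bf coalg}})^{\boxslash} \to \J_{\A}^{\boxslash}$ agrees with the isomorphism constructed above is a direct calculation: on each component $A$ it reduces, by the copower adjunction, to the corresponding statement for the generators $\J$ of $(\L,\R)$, which holds because $\J$ is algebraically-free on $(\L,\R)$.

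The main obstacle is the second step: the careful bookkeeping that the coherence conditions on $\J_{\A}^{\boxslash}$ really do split cleanly into the $\J$-coherence (giving algebra structures pointwise) and the $\A^{\op}$-coherence (giving the naturality of these algebra structures), and that this bijection is itself natural in morphisms of $(\M^{\A})^{\bf 2}$ so as to yield an isomorphism of categories, not merely of underlying sets. Once this is in place, everything else is formal consequences of the Yoneda lemma and of the algebraically-free property of $\J$.
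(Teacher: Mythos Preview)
Your approach is genuinely different from the paper's and the core computation is correct. The paper does not verify the algebraically-free condition directly; instead it runs Garner's small object argument on $\J_{\A}$ stage by stage, computing the step-zero density comonad $(L^{\A})^0\alpha$ as a coend and showing (via Fubini and the copower/evaluation adjunction) that it reduces to the pointwise $L^0$. Since all subsequent stages are built from this by pushouts, coequalizers, and transfinite composites---all of which are computed pointwise---the entire construction converges to the pointwise awfs. Your route via $\R^{\A}\text{-}\mathrm{{\bf alg}} \cong (\Ralg)^{\A} \cong (\J^{\boxslash})^{\A} \cong \J_{\A}^{\boxslash}$ is more conceptual and lands directly on the algebraically-free isomorphism (\ref{algfreeiso}), which is the property actually used downstream (e.g.\ in Lemma~\ref{retractlem}).

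There is, however, a gap relative to the statement as phrased. In this paper, ``generated by $\J_{\A}$'' means \emph{produced by Garner's small object argument}, and the proof explicitly notes that $\M^{\A}$ is not known a priori to permit the small object argument---this is automatic under $(*)$ since functor categories of locally presentable categories are locally presentable, but not obviously under $(\dagger)$. The paper's stage-by-stage argument therefore does double duty: it identifies the output \emph{and} proves the construction converges. Your argument establishes that $(\L^{\A},\R^{\A})$ is algebraically-free on $\J_{\A}$, but to conclude it is literally the awfs produced by the small object argument you still owe either (i) a separate argument that $\M^{\A}$ permits the small object argument, together with a uniqueness argument for algebraically-free awfs (the monad isomorphism plus Theorem~\ref{compcharthm}, having checked that your isomorphism respects the canonical composition of algebras of Example~\ref{compex}), or (ii) a direct verification of the \emph{free} universal property for your $\lambda_{\A}$. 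Neither step is hard, but neither is present. A minor point: your description ``$Lj = \lambda(j)$'' is garbled; what you mean is that $y(A)\cdot j$ inherits an $\L^{\A}$-coalgebra structure as a pointwise coproduct of copies of the $\L$-coalgebra $\lambda(j)$, using that $\Lalg$ is closed under coproducts.
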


In keeping with the previous notational conventions, we regard $\J_{\A}$ as the category with objects $\A(a,-) \cdot j$ for $a \in \A$ and $j \in \J$. Morphisms are generated by maps $\A(a,-) \cdot j \Ra \A(a, -) \cdot j'$ for every $j \Ra j'$ in $\J$ and by maps $f^* \colon \A(b,-) \cdot j \Ra \A(a,-) \cdot j$ for every $f \colon a \ra b$ in $\A$.  We prefer to write $J_{\A} \colon \J_{\A} \ra (\M^{\A})^{\bf 2}$ for the composite functor defined above.

\begin{proof}[Proof of Theorem \ref{cgiawfs}] We don't know a priori whether $\M^{\A}$ permits the small object argument, but we can begin to apply that construction to the category $\J_{\A}$ over $(\M^{\A})^{\bf 2}$ nonetheless. We will show that the functors $(L^{\A})^0$, $(L^{\A})^1$, $(R^{\A})^1$, $(L^{\A})^2$, $(R^{\A})^2$, etc that arise at each step agree with the functors $L^0$, $L^1$, $R^1$, etc pointwise. It will follow that our construction on $(\M^{\A})^{\bf 2}$ converges to the awfs $(\L^{\A}, \R^{\A})$, which is therefore generated by $\J_{\A}$.

The beginning stage of the small object argument computes the step-zero comonad $(L^{\A})^0$ as the left Kan extension of $J_{\A} \colon \J_{\A} \ra (\M^{\A})^{\bf 2}$ along itself. Note that $(\M^{\A})^{\bf 2}$ is cocomplete, since $\M$ is. The familiar formula for Kan extensions gives
\begin{align*} (L^{\A})^0 \a &= \int^{(a, j) \in \J_{\A}\cong \A^{\op} \times \J} \Hom_{(\M^{\A})^{\bf 2}}( \A(a,-)\cdot j, \a) \cdot (\A(a,-) \cdot j). \\ \intertext{The natural transformation $\A(a,-)\cdot j$ is the image of $j$ under a functor $\M^{\bf 2} \ra (\M^{\A})^{\bf 2}$ that is left adjoint to evaluation at $a$. By this adjunction, the above coend equals} &= \int^{\J_{\A}\cong \A^{\op} \times \J} \Hom_{\M^{\bf 2}}(j, \a_a) \cdot (\A(a,-)\cdot j) \\ &= \int^{\J_{\A}\cong \A^{\op}\times \J} \Sq(j,\a_a)\cdot (\A(a,-)\cdot j) \\ \intertext{where we've written ``Sq'' to indicate that morphisms from $j$ to $\a_a$ in $\M^{\bf 2}$ are commutative squares. By Fubini's theorem and cocontinuity of the copower, we can use the isomorphism $\J_{\A} \cong \A^{\op} \times \J$ to compute the coend over $\J$ first, yielding} &= \int^{\A^{\op}} \A(a,-) \cdot \left( \int^{\J} \Sq(j,\a_a) \cdot j \right) \\ &= \int^{\A^{\op}} \A(a,-) \cdot L^0 \a_a \\ \intertext{where $L^0$ is the step-zero comonad for $\J$. We now express this coend as a coequalizer} &= \text{coeq} \left( \coprod_{f \colon a \ra b} \A(b,-) \cdot L^0 \a_a \rightrightarrows \coprod_a \A(a,-) \cdot L^0 \a_a \right)
\end{align*}
where the top arrow is induced by $f^* \colon \A(b,-) \ra \A(a,-)$ and the bottom arrow is induced by $L^0$ applied to the naturality square for $f$, which is a morphism from $\a_a$ to $\a_b$ in $\M^{\bf 2}$. We compute this coequalizer pointwise; by inspection at an object $c \in \A$, the coequalizer in $\M^{\bf 2}$ is $L^0\a_c$ with $\A(a,c) \cdot L^0\a_a \Ra L^0\a_c$ given by the evaluation map. This object and morphism satisfy the required universal property: the map out of $L^0 \a_c$ can be found by restricting to the identity component of the copower $\A(c,-)\cdot L^0 \a_c$. 

The remaining steps in the small object argument are constructed from previous ones by applying the comonad $(L^{\A})^0$ and taking pushouts, coequalizers, and transfinite composites, which are all computed pointwise. As we've shown that $(L^{\A})^0$ is also computed pointwise, we are done. Because $\M$ permits the small object argument, this process will converge for each arrow $\a_a$ at some time (ordinal) $\b$, which means that the naturally constructed arrows from step $\b$ to step $\b+1$ are isomorphisms. It follows that there is a natural isomorphism from step $\b$ on $(\M^{\A})^{\bf 2}$ to step $\b+1$, which tells us that the construction converges. This completes the proof that Garner's small object argument applied to $\J_{\A}$ will give the pointwise monad and comonad of $(\L^{\A}, \R^{\A})$. Hence, $(\L^{\A}, \R^{\A})$ is the awfs generated by $\J_{\A}$.
\end{proof}

\begin{ex}\label{trivmodelstrex} When $\A$ is a small category, the awfs of Lack's trivial model structure on the 2-category $\Cat^{\A}$ are pointwise awfs. In the case $\A = {\bf 2}$, Lack proves \cite[Proposition 3.19]{lackhomotopy} that his trivial model structure is not cofibrantly generated in Quillen's sense. By contrast, Theorem \ref{cgiawfs} can be used to show that this is an algebraic model structure, which is cofibrantly generated in Garner's sense.
\end{ex}

\subsection{Algebraic projective model structures} 

Using Theorem \ref{adjthm} and the pointwise algebraic weak factorization system described above, we can prove that any cofibrantly generated algebraic model structure on a category $\M$ induces a cofibrantly generated \emph{projective algebraic model structure} on the diagram category $\M^{\bf \A}$. The awfs of this model structure are not the pointwise awfs on $\M^{\A}$; instead, the generating categories are discrete, at least when the original generators $\I$ and $\J$ are. The underlying model structure agrees with the usual projective model structure on a diagram category: weak equivalences are pointwise weak equivalences and fibrations are pointwise fibrations.

The generating categories $\I_{\mathrm{proj}}$ and $\J_{\mathrm{proj}}$ for the projective model structure look familiar; in the case where $\I$ and $\J$ are discrete these are the usual generating sets in the classical theory. Objects of $\I_{\mathrm{proj}}$ are functors $\A(a,-) \cdot i$, for all $a \in \A$ and $i \in \I$. Each morphism $i \Ra i'$ in $\I$ gives rise to a morphism $\A(a,-) \cdot i \Ra \A(a,-) \cdot i'$ in $\I_{\mathrm{proj}}$; there are no others. The category $\J_{\mathrm{proj}}$ is defined similarly.

\begin{thm} Let $\M$ have an algebraic model structure, generated by $\I$ and $\J$, with weak equivalences $\W_{\M}$. Then the categories $\I_{\mathrm{proj}}$ and $\J_{\mathrm{proj}}$ give rise to a cofibrantly generated algebraic model structure on $\M^{\A}$, which we will call the projective algebraic model structure.
\end{thm}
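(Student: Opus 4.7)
The strategy is to invoke Theorem~\ref{adjthm} along the left Kan extension adjunction
$$T \colon \M^{\ob \A} \rightleftarrows \M^{\A} \colon S,$$
where $S$ is restriction along the inclusion $\ob\A \hookrightarrow \A$ and $T$ sends a family $(X_a)_{a \in \ob\A}$ to $\coprod_{a} \A(a,-) \cdot X_a$. Since $\ob\A$ is discrete, $\M^{\ob \A}$ inherits from $\M$ a product algebraic model structure whose generating categories are the discrete categories $\ob\A \times \I$ and $\ob\A \times \J$, and whose weak equivalences are componentwise weak equivalences of $\M$. Because $\M$ permits the small object argument, so does $\M^{\A}$, as colimits and presentability/boundedness transfer pointwise. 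A direct calculation using the coend formula for $T$ shows $T(\ob\A \times \I) = \I_{\mathrm{proj}}$ and $T(\ob\A \times \J) = \J_{\mathrm{proj}}$ as categories over $(\M^{\A})^{\bf 2}$: each generator $(a, j)$ is sent to the copower $\A(a,-) \cdot j$, and, since the source category has no cross-component morphisms, the only induced morphisms are those coming from $\J$ (respectively $\I$), precisely matching the description of $\J_{\mathrm{proj}}$ and $\I_{\mathrm{proj}}$.

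The main obstacle is verifying the acyclicity hypothesis~$(\star)$ of Theorem~\ref{adjthm}: for every arrow $\a$ in the left class of the awfs on $\M^{\A}$ generated by $\J_{\mathrm{proj}}$, each component $\a_a$ must be a weak equivalence in $\M$. To prove this I would compare $\J_{\mathrm{proj}}$ with the larger generating category $\J_{\A}$ of Theorem~\ref{cgiawfs}, which generates the pointwise awfs on $\M^{\A}$ induced by $(\CC_t, \FF)$. The two share the same objects, and there is an evident faithful inclusion $\J_{\mathrm{proj}} \hookrightarrow \J_{\A}$ over $(\M^{\A})^{\bf 2}$: the latter contains the reindexing morphisms $f^{*} \colon \A(b,-)\cdot j \Ra \A(a,-)\cdot j$ that are absent from the former. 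By the universal property recalled in Remark~\ref{comprmk}, this inclusion induces a morphism of awfs from the projective one into the pointwise awfs. Under the accompanying functor on coalgebras from Definition~\ref{morawfsdefn}, any coalgebra for the projective left comonad is carried to a coalgebra for the pointwise left comonad; such a coalgebra has $\CC_t$-coalgebra structures pointwise, so its components are trivial cofibrations in $\M$ and in particular weak equivalences. Functoriality of $S$ and retract closure of $\W_{\M^{\ob\A}}$ then propagate this conclusion from the coalgebras to the full retract-closed left class, establishing~$(\star)$.

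With $(\star)$ verified, Theorem~\ref{adjthm} delivers the desired cofibrantly generated algebraic model structure on $\M^{\A}$ with weak equivalences $S^{-1}(\W_{\M^{\ob\A}})$, which is exactly the class of pointwise weak equivalences. The comparison map is produced along the lines of the proof of Theorem~\ref{adjthm}: the product of copies of the comparison map on $\M$ furnishes a comparison map on $\M^{\ob\A}$, and Corollary~\ref{Tthm} combined with the universal property of Garner's small object argument transports this across $T$ to the required comparison map on $\M^{\A}$. As a sanity check, the adjunction isomorphism $\A(a,-) \cdot j \boxslash g \iff j \boxslash \mathrm{ev}_a g$ identifies the fibrations of this algebraic model structure as the pointwise fibrations in $\M$, matching the usual description of the projective model structure.
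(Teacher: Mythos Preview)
Your overall strategy matches the paper's exactly: establish the product algebraic model structure on $\M^{\ob\A}$ with generators $\ob\A \times \I$ and $\ob\A \times \J$ (the paper's $\I_{\A_0}$, $\J_{\A_0}$), observe that the left Kan extension adjunction carries these to $\I_{\mathrm{proj}}$ and $\J_{\mathrm{proj}}$, and then invoke Theorem~\ref{adjthm}. The only substantive difference is in the verification of the acyclicity condition~$(\star)$. The paper argues by a retract: it factors an arbitrary $\a$ in the left class using the \emph{pointwise} awfs $(\CC_t^{\A},\FF^{\A})$, observes that the right factor $F^{\A}\a$ lies in $\J_{\mathrm{proj}}^{\boxslash}$ (since its components are free $\FF$-algebras), and concludes that $\a$ is a retract of $C_t^{\A}\a$, which is pointwise a trivial cofibration. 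Your route is more structural: the inclusion $\J_{\mathrm{proj}} \hookrightarrow \J_{\A}$ over $(\M^{\A})^{\bf 2}$ induces, via the universal property of Garner's small object argument, a morphism of awfs from the projective awfs to the pointwise one, whose coalgebra functor exhibits every projective coalgebra as a pointwise $\CC_t^{\A}$-coalgebra and hence as a pointwise trivial cofibration. Both arguments are correct; yours avoids the ad hoc retract manipulation and uses the awfs machinery more directly, while the paper's is slightly more elementary in that it does not invoke the induced morphism of awfs. One minor slip: you call $\ob\A \times \I$ and $\ob\A \times \J$ ``discrete categories,'' but they are only discrete when $\I$ and $\J$ are; this does not affect the argument.
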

\begin{proof}
Write $\A_0$ for the discrete subcategory of objects of $\A$. We first show that the algebraic model structure on $\M$ induces a model structure on the diagram category $\M^{\A_0}$. We then use an adjunction to pass this across to the projective model structure on $\M^{\A}$. 

Arrows of $\M^{\A_0}$ are natural transformations with no naturality conditions, i.e., collections $\a$ of morphisms $\a_a$ in $\M$ for each $a \in \A_0$. The categories $\I$ and $\J$ induce a pair of pointwise awfs on $\M^{\A_0}$. By Theorem \ref{cgiawfs}, these awfs are generated by $\I_{\A_0}$ and $\J_{\A_0}$.\footnote{Of course, it is also possible to prove this directly as an easier special case of that theorem.} The comparison map of the algebraic model structure on $\M$ gives the elements of $\J$ coalgebra structures for the comonad generated by $\I$ that are natural with respect to morphisms in $\J$. Pointwise, this functor can be used to define a functor from $\J_{\A_0}$ to the category of coalgebras for the comonad induced by $\I_{\A_0}$. By Remark \ref{comprmk}, this induces a comparison map between the awfs generated by $\J_{\A_0}$ and $\I_{\A_0}$.

We quickly prove that this gives an algebraic model structure. As in the proof of Theorem \ref{adjthm}, the existence of this comparison map implies that trivial cofibrations are cofibrations and trivial fibrations are fibrations.  Let $\W_0$ be the class of morphisms of $\M^{\A_0}$ that are pointwise weak equivalences. With this definition it is clear that trivial cofibrations and trivial fibrations are weak equivalences. So to show that $\I_{\A_0}$ and $\J_{\A_0}$ give rise to an algebraic model structure, it remains only to show that fibrations that are weak equivalences are trivial fibrations.

More precisely, we need to show is that algebras for the monad induced by $\J_{\A_0}$ that are pointwise weak equivalences have an algebra structure for the monad induced by $\I_{\A_0}$. Since the category $\A_0$ is discrete, a collection $\a$ of morphisms $\a_a$ has an algebra structure for the monad induced by $\J_{\A_0}$ just when each $\a_a$ is an algebra for the monad induced by $\J$. Here, each $\a_a$ is a trivial fibration for the algebraic model structure on $\M$; by Lemma \ref{retractlem} this means that it has an algebra structure for the monad induced by $\I$. Again because $\A_0$ is discrete, this means that the collection $\a$ has an algebra structure for the monad induced by $\I_{\A_0}$, which is what we wanted to show. So the categories $\I_{\A_0}$ and $\J_{\A_0}$ generate an algebraic model structure on $\M^{\A_0}$.

Let $i \colon \A_0 \hookrightarrow \A$ be the canonical inclusion. Then left Kan extension along $i$ gives rise to an adjunction $$\xymatrix{ \mathrm{Lan}_i \colon \M^{\A_0} \ar@<1ex>[r] \ar@{}[r]|-{\perp} & \M^{\A} \colon i^* \ar@<1ex>[l]}$$ Here $i^*$ might be thought of as an ``evaluation'' map; it takes a functor $G\colon \A \ra \M$ to the collection of objects in its image and a natural transformation $\a$ to its collection of constituent arrows. Using the usual formula for left Kan extensions, the left adjoint takes an arrow $\a \in \M^{\A_0}$ to the disjoint union $\displaystyle\sqcup_{c \in \A_0} \A(c,-) \cdot \a_c$. Objects in $\I_{\A_0}$ are natural transformations $\A_0(a,-) \cdot i$ for some $a \in \A$ and $i \in \I$. As $\A_0$ is discrete, this natural transformation consists of the arrow $i$ at the component for $a$ and the identity arrow at the initial object of $\M$ at all other objects of $\A$. The image of this object under Lan$_i$ is $\A(a,-) \cdot i$, by the above formula.  It follows that $$\mathrm{Lan}_i\, \I_{\A_0} = \I_{\mathrm{proj}} \hspace{.5cm} \mathrm{and} \hspace{.5cm} \mathrm{Lan}_i\, \J_{\A_0} = \J_{\mathrm{proj}}.$$

In order to apply Theorem \ref{adjthm} and conclude that $\M^{\A}$ has an algebraic model structure generated by $\I_{\mathrm{proj}}$ and $\J_{\mathrm{proj}}$, we must show that the right adjoint $i^*$ takes the underlying maps of the coalgebras for the comonad generated by $\J_{\mathrm{proj}}$ to weak equivalences in $\M^{\A_0}$. In other words, we must show that the coalgebras for the comonad generated by $\J_{\mathrm{proj}}$ are pointwise weak equivalences. 

Coalgebras for the comonad generated by $\J_{\mathrm{proj}}$ are in the left class of the underlying wfs   that this category generates, that is, they are arrows satisfying the LLP with respect to the underlying class of $\J_{\mathrm{proj}}^{\boxslash}$.  From the adjunction, we know that the underlying class of $\J_{\mathrm{proj}}^{\boxslash} = (\mathrm{Lan}_i\, \J_{\A_0})^{\boxslash} = (i^*)^{-1}(\J_{\A_0}^{\boxslash})$ is the class of pointwise algebras for the original monad generated by $\J$ on $\M^{\bf 2}$. 

Let $\a$ be an element of the left class generated by $\J_{\mathrm{proj}}$ and factor $\a$ using the pointwise awfs $(\CC_t^{\A}, \FF^{\A})$ on $\M^{\A}$, not the awfs generated by $\J_{\mathrm{proj}}$. The components of the right factor $F^{\A}\a$ are algebras for the monad $\FF$ generated by $\J$ because $F^{\A}\a$ is an algebra for the monad $\FF^{\A}$. So $F^{\A}\a \in \J_{\mathrm{proj}}^{\boxslash}$ and hence $\a$ lifts against $F^{\A}\a$, which means that $\a$ is a retract of $C_t^{\A}\a$. The constituent maps $(C_t^{\A}\a)_a = C_t (\a_a)$ are coalgebras for the comonad on $\M^{\bf 2}$ generated by $\J$; in particular they are weak equivalences, since $\J$ is the generating category of trivial cofibrations. So pointwise the arrows of $\a$ are retracts of weak equivalences; hence $\a$ consists of pointwise weak equivalences. Theorem \ref{adjthm} may now be used to establish the projective algebraic model structure.
\end{proof}

\section{Recognizing cofibrations}\label{appsec}

In previous sections, we have seen that cofibrantly generated model structures can be ``algebraicized,'' so that the constituent wfs are in fact awfs. This gives all fibrations the structure of algebras for a monad and some cofibrations the structure of coalgebras for a comonad. This extra algebraic structure is unobtrusive, in the sense that it can be forgotten at any point to yield an ordinary notion of a model structure, with the added benefit that the factorizations constructed by Garner's small object argument are somehow ``smaller.''

However, we have not yet given a convincing argument that this extra algebra structure is useful, allowing us to prove theorems that were intractable otherwise. In this section, we will provide the first such examples, illustrating the following point: one pleasant feature of this algebraic data is it gives a technique for proving that certain maps are cofibrations.

\subsection{Coalgebra structures for the comparison map} 

One such example is the following theorem, which is joint work with Richard Garner and Mike Shulman. In this theorem, we will require that the comparison map arise from a functor $\t\colon \J \ra \I$ over $\M^{\bf 2}$ of a particularly nice form. Firstly, we require that it be a full inclusion (full, faithful, and injective on objects). Secondly, we require that $\I$ decompose as a coproduct $\t(\J) \sqcup \I'$, i.e., that there are no morphisms from objects in the image of $\t$ to objects not in the image.\footnote{We suspect that this second condition is unnecessary but include it to simplify the arguments given below.} Note that when $\J$ and $\I$ are sets, these requirements simply mean that the generating trivial cofibrations $\J$ are a subset of the generating cofibrations $\I$.

\begin{thm}\label{cofthm} Let $\J$ and $\I$ be categories that generate an algebraic model structure on $\M$ and such that we have an inclusion $\t\colon \J \ra \I$ over $\M^{\bf 2}$ of the form described above. Suppose also that the cofibrations are monomorphisms in $\M$. Then the components of the comparison map $\xi \colon (\CC_t, \FF) \rightarrow (\CC, \FF_t)$ produced by Garner's small object argument are cofibrations and, furthermore, coalgebras for the comonad $\CC$.
\end{thm}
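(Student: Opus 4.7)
The plan is to prove that $\xi_f$ admits a $\CC$-coalgebra structure by opening up Garner's small object argument for $\J$ and for $\I$ in parallel and inducting on the stages. Writing $E^\alpha_\J f$ and $E^\alpha_\I f$ for the intermediate objects produced at stage $\alpha$ of each construction, there are natural maps $\xi^\alpha_f \colon E^\alpha_\J f \to E^\alpha_\I f$ induced from $\t$ by the universal property of each step, whose colimit (at the ordinal at which each construction converges) is $\xi_f \colon Rf \to Qf$. I aim to show by transfinite induction that every $\xi^\alpha_f$ carries a canonical $\CC$-coalgebra structure, with transitions $\xi^\alpha_f \to \xi^{\alpha+1}_f$ being morphisms of $\CC$-coalgebras; by Proposition \ref{closureprop}, the colimit $\xi_f$ then inherits a $\CC$-coalgebra structure, which is the desired conclusion (and in particular witnesses $\xi_f$ as a cofibration).

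For the base case, the decomposition $\I \cong \t(\J) \sqcup \I'$ produces a corresponding decomposition $L^0_\I f \cong L^0_\J f \sqcup L^0_{\I'} f$ of the step-zero density comonad, since left Kan extensions send coproducts of indexing categories to coproducts in the target. The step-one pushout of (\ref{steponeeq}) for $\I$ can then be performed in two stages: first pushing out along $L^0_\J f$ to obtain $E^1_\J f$, then pushing out along the $\I'$-part to obtain $E^1_\I f$. Hence $\xi^1_f$ is literally a pushout of a coproduct of arrows from $\I'$. Each such arrow is a free $\CC$-coalgebra via the universal functor $\lambda \colon \I \to \Calg$ provided by Garner's construction, and since the forgetful functor $\Calg \to \M^{\bf 2}$ creates colimits (Proposition \ref{closureprop}), these colimits lift to $\Calg$; thus $\xi^1_f$ carries a canonical $\CC$-coalgebra structure.

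The successor step at $\alpha \geq 2$ is the main obstacle, because at these stages Garner's construction involves not only a pushout of $L^0 R^\alpha$ but also a coequalizer that identifies cells attached in redundantly many ways. I will argue that both the pushout and the subsequent coequalizer for $\I$ decompose into the analogous operations for $\J$ together with independent operations on cells originating in $\I'$. The hypothesis that cofibrations are monomorphisms enters essentially at precisely this point, to rule out that the coequalizer identifies a $\J$-cell with an $\I'$-cell: because the attaching maps of $\J$-cells in $E^\alpha_\J f$ are monomorphisms, the $\J$-cells embed as distinguished subobjects of $E^\alpha_\I f$, and no coequalizer relation can conflate them with cells originating from $\I'$. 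Granted this decomposition, $\xi^{\alpha+1}_f$ is again a pushout of a coequalizer of a coproduct of arrows from $\I'$, hence a $\CC$-coalgebra by repeated application of Proposition \ref{closureprop}; the transition $\xi^\alpha_f \to \xi^{\alpha+1}_f$ is a coalgebra morphism by a direct check against the defining universal properties, and passage to the transfinite colimit completes the proof.
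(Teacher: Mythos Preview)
Your base case is correct: the coproduct decomposition $\I \cong \t(\J) \sqcup \I'$ splits the density comonad, and the two-stage pushout shows $\xi^1_f$ is a pushout of the $\I'$-density comonad of $f$, hence a $\CC$-coalgebra. But your successor step has a genuine gap. At stage $\alpha+1$, the two small object arguments operate on \emph{different} arrows: the $\J$-construction attaches cells indexed by squares from $\J$ into $F^\alpha_\J f \colon E^\alpha_\J f \to \cod f$, while the $\I$-construction uses squares from $\I$ into $F^\alpha_\I f \colon E^\alpha_\I f \to \cod f$. Even restricted to $\J$, these indexing sets differ, since the domains of the targets differ. So the claimed decomposition of the stage-$(\alpha+1)$ $\I$-construction into ``the $\J$-construction plus an independent $\I'$-piece'' does not follow from the splitting of $\I$ alone; you need to compare squares into two different arrows, and you have not said how. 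Your subsequent argument that the coequalizer cannot conflate $\J$-cells with $\I'$-cells is likewise too vague: the coequalizer relations compare cells attached at stage $\alpha$ with cells attached at stage $\alpha+1$, and these live over different intermediate objects in the two constructions.

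The paper closes this gap with two moves you did not make. First, it isolates a lemma (Lemma~\ref{coflem}) for an arbitrary triangle $h \colon f \Rightarrow g$ over a common codomain with $h$ a monic $\CC$-coalgebra, showing that the induced map between the step-one $\J$-factorization of $f$ and the step-one $\I$-factorization of $g$ is a $\CC$-coalgebra; the monomorphism hypothesis on $h$ is exactly what guarantees that composition with $(h,1)$ injects $\J$-squares into $f$ into $\I$-squares into $g$, so that the ``complement'' is well defined. This lemma is then applied inductively with $h = \xi^\alpha_f$. Second, the paper replaces Garner's construction by a modified small object argument, available precisely when the left maps are monomorphisms, in which redundant cells are never attached and hence no coequalizer is taken. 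This eliminates the part of your inductive step you yourself flag as ``the main obstacle.''
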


Let us provide some intuition for this result. Given an arrow $f$, we construct $Qf$ from $Rf$ by attaching more ``cells.'' Because the cofibrations are monomorphisms, the ``cells'' we had attached previously to form $Rf$ are not killed by the quotienting involved in the construction of $Qf$. Hence the arrow $\xi_f \colon Rf \ra Qf$ is itself a cofibration, and furthermore, because it was constructed cellularly, $\xi_f$ is a $\CC$-coalgebra.

It takes some effort to describe the comparison map explicitly and accordingly it will take some work to translate the above intuition into a rigorous argument.  When the awfs are cofibrantly generated, the comparison map $\xi \colon (\CC_t, \FF) \ra (\CC, \FF_t)$ is induced by the cone produced by the right-hand factorization over the colimits of the left-hand factorization. These are each constructed by various colimiting processes at a number of stages, and the proof will accordingly involve a transfinite induction corresponding to each stage. 

Specifically, for each ordinal $\a$, the small object argument produces functorial factorizations $(C^{\a}_t, F^{\a})$ and $(C^{\a}, F^{\a}_t)$. Let $Q^{\a},R^{\a} \colon \M^{\bf 2} \ra \M$ denote the functors accompanying each functorial factorization, i.e., so that $f \colon X \ra Z$ factors as $$\xymatrix{ & X \ar[dl]_-{C^{\a}_tf} \ar[dr]^-{C^{\a}f} & \\ R^{\a} f \ar[rr]^-{\xi^{\a}_f} \ar[dr]_-{F^{\a}f} & & Q^{\a}f \ar[dl]^-{F^{\a}_tf} \\ & Z}$$ with $\xi^{\a}$, the component at $f$ of the step-$\a$ comparison map, as depicted.

Recall, the step-one factorization $(C_t^1,F^1)$ is constructed by factoring the counit of the density comonad of $J \colon \J \ra \M^{\bf 2}$ as a pushout followed by a square with domain equal to the identity, as indicated below.
\begin{equation}\label{step0and1}\xymatrix{ \cdot \ar[r] \ar[d]_{C_t^0f} \ar@{}[dr]|(.8){\ulcorner}& X \ar[d]^{C_t^1 f} \ar@{=}[r] & X \ar[d]^f \\ \cdot \ar[r] & R^1f \ar[r]_{F^1f} & Z}\end{equation}
In the familiar case when $\J$ is discrete, $C_t^0f=$Lan$_JJ(f)$ is the coproduct of elements $j \in \J$ over commutative squares from $j$ to $f$, and the top and bottom horizontal composites are the canonical arrows induced from these coproducts. 

A key step in the proof of Theorem \ref{cofthm} is the following lemma, which will imply that the step-one comparison map $\xi^1_f \colon R^1 f \ra Q^1 f$ is a $\CC$-coalgebra. The general form of this lemma will enable multiple applications.

\begin{lem}\label{coflem} Let $\J$ and $\I$ be small categories with an inclusion $\J \ra \I$ over $\M^{\bf 2}$ as described above and let  $(C^1_t, F^1)$ and $(C^1, F^1_t)$ be the step-one factorizations they produce. Given any commutative triangle $\raisebox{.25in}{\xymatrix{ X \ar@{ >->}[rr]^h \ar[dr]_f && Y \ar[dl]^g \\ & Z &}}$ such that $h$ is a cofibration and a monomorphism, then the map $\xi^1 \colon R^1 f \ra Q^1 g$ induced by the colimit is a cofibration. If furthermore $h$ is a $\CC$-coalgebra, then so is $\xi$.
\end{lem}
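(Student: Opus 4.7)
The plan is to factor $\xi^1$ as a composite of three maps, each a pushout of a $\CC$-coalgebra, and then assemble these coalgebra structures using the composition law for coalgebras of $\CC$ (dual to Lemma \ref{complem}). Explicitly, I would write
\[ \xi^1 \colon R^1 f \longrightarrow R^1 f \sqcup_X Y \longrightarrow R^1 g \longrightarrow Q^1 g, \]
where the outer objects are as in the statement, the first arrow is the pushout of $h$ along $X \to R^1 f$, and the third is the ordinary step-one comparison map $\xi^1_g$ for $g$ alone. The first map is a $\CC$-coalgebra whenever $h$ is (since $\Calg$ is cocomplete by Proposition \ref{closureprop}, so closed under pushouts), and is a cofibration whenever $h$ is.

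For the third map, the hypothesis $\I \cong \t(\J) \sqcup \I'$ (as a category, with no morphisms between the two summands) gives a coproduct decomposition $C^0 g \cong C_t^0 g + B_g$ in $\M^{\bf 2}$, where $B_g = \int^{i \in \I'} \Sq(i,g)\cdot i$. The defining pushout for $Q^1 g$ thereby computes in two stages: first $R^1 g$ via $C_t^0 g$, then $Q^1 g$ via $B_g$, so $\xi^1_g$ is a pushout of the domain-to-codomain arrow of $B_g$. Each $i \in \I'$ lifts canonically to $\Calg$ by the freeness of Garner's small object argument (Section \ref{cgssec}), and morphisms in $\I'$ lift to morphisms in $\Calg$, so $B_g$ is a colimit in $\Calg$ and hence is itself a $\CC$-coalgebra (Proposition \ref{closureprop}). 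Pushing out yields a $\CC$-coalgebra structure on $\xi^1_g$.

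For the middle map, a parallel analysis applies. The natural map $C_t^0(h,1_Z) \colon C_t^0 f \to C_t^0 g$ is induced, in the discrete case, by postcomposition of squares with $h$, giving $\Sq(j,f) \to \Sq(j,g)$ for each $j \in \J$. The assumption that $h$ is a monomorphism makes each of these maps injective, allowing the decomposition $C_t^0 g \cong C_t^0 f + D$ in $\M^{\bf 2}$ with $D = \int^{j \in \J} \Sq'(j,f,g) \cdot j$, where $\Sq'(j,f,g)$ consists of the squares from $j$ to $g$ whose top edge does not factor through $h$. A direct computation then identifies $R^1 f \sqcup_X Y$ with $Y \sqcup_{\dom(C_t^0 f)} \cod(C_t^0 f)$ and $R^1 g$ as its further pushout along $D$, so the middle arrow is a pushout of $D$; $D$ is a $\CC$-coalgebra by the same argument used for $B_g$. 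Composing the three coalgebra structures via the dual of the composition law of Lemma \ref{complem} gives the desired $\CC$-coalgebra structure on $\xi^1$; forgetting it yields the cofibration claim.

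The main obstacle is to push the decomposition $C_t^0 g \cong C_t^0 f + D$ through the coequalizers that appear in the coend defining the density comonad when $\J$ is non-discrete. This amounts to checking that the identifications made among squares at $g$ by morphisms of $\J$ do not mix the pushforward of $\Sq(j,f)$ with its complement $\Sq'(j,f,g)$. The hypothesis that $h$ is a monomorphism (and that $\t$ is a full inclusion) is what prevents such mixing: injectivity of the map on squares guarantees that the complement is well-defined, and fullness of $\t$ guarantees that the coequalizer relations on $\Sq(j,g)$ restrict coherently to each piece. The analogous splitting $C^0 g \cong C_t^0 g + B_g$ for the larger coend is by contrast immediate, since it comes directly from the coproduct decomposition of the indexing category $\I$.
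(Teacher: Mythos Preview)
Your strategy matches the paper's: factor $\xi^1$ through the pushout $R^1 f \sqcup_X Y$ of $h$, then show the remaining map is a pushout of a $\CC$-coalgebra assembled from identities and generating cofibrations. The paper does this in two steps rather than three. It does not separate your middle and third maps, but instead applies a small cube lemma (pushouts taken in opposite faces of a cube whose top and bottom faces are pushouts themselves form a pushout square) to exhibit the entire map $w\colon R^1 f \sqcup_X Y \to Q^1 g$ at once as a pushout of a single arrow $c$ built from the left face $(i_S,i_D)\colon C_t^0 f \to C^0 g$, and then argues directly that $c$ is a colimit in $\Calg$. Your three-step version is a legitimate refinement of this; in the discrete case both go through, and your splitting $C^0 g \cong C_t^0 g + B_g$ for the third map is exactly how the paper uses the hypothesis $\I = \tau(\J) \sqcup \I'$.

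There is, however, a genuine gap in your handling of the middle map in the non-discrete case. What you need is that the inclusion of presheaves $\Sq(-,f) \hookrightarrow \Sq(-,g)$ on $\J$ be a coproduct summand, so that the coend splits as $C_t^0 g \cong C_t^0 f + D$. You correctly flag the obstacle: a morphism $\alpha\colon j \to j'$ in $\J$ may carry a square $s'\colon j' \Rightarrow g$ not factoring through $h$ to a square $s'\cdot\alpha\colon j \Rightarrow g$ that does. But your proposed fix is wrong. Fullness of $\tau\colon \J \to \I$ constrains morphisms in $\I$ between objects of $\tau(\J)$; it says nothing about how morphisms internal to $\J$ act on $\Sq(j,g)$, so neither hypothesis on $\tau$ rules out such mixing, and the presheaf-level splitting can fail. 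The paper's combined approach avoids asking for this splitting: it describes the pushout map $c$ directly as a colimit of a diagram whose objects are identities or arrows of $\I$ and whose connecting morphisms are either arrows of $\I$ (canonically coalgebra maps via the unit functor) or coproduct inclusions, and checks these are maps in $\Calg$. If you want to keep your three-step factorization, the middle map should be analyzed the same way, as a pushout of the arrow $c_1$ obtained from $C_t^0(h,1)\colon C_t^0 f \to C_t^0 g$ via that same cube lemma, rather than via a putative direct-sum complement $D$.
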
 

\begin{rmk}\label{coalgfactsrmk}
The proof of Lemma \ref{coflem} will require some basic facts about coalgebras for a comonad. We say a morphism $(u,v) \colon f \Ra g$ in $\M^{\bf 2}$ is a map of $\CC$-coalgebras if it lifts to the category $\Calg$ (where we usually have particular coalgebra structures for $f$ and $g$ in mind).  In particular, if $f$ has a coalgebra structure and $g$ is a pushout of $f$, the pushout square is a map of coalgebras, when $g$ is given the canonical coalgebra structure of the pushout. (See the example in Footnote \ref{pushfootnote}.) Similarly, if $g$ is a colimit of any diagram in $\M^{\bf 2}$ whose objects are coalgebras and whose arrows are maps of coalgebras, then $g$ inherits a canonical coalgebra structure such that the legs of the colimit cone are maps of coalgebras. (This is a consequence of Theorem \ref{closureprop}.)  Finally, when $\CC$ is the comonad of an awfs, $\CC$-coalgebras are closed under composition, as we saw in Lemma \ref{complem}, in such a way that $(1,g) \colon f \Ra gf$ is a map of coalgebras if $f$ and $g$ are coalgebras.

We will use all of these facts in the proofs that follow.
\end{rmk}

\begin{proof}[Proof of Lemma \ref{coflem}] The defining pushouts for the arrows $C_t^1f$ and $C^1g$ are the top and bottom faces of the cube below.
\begin{equation}\label{lemmacube}{\small \xymatrix{ & \cdot \ar'[d][dd]^{i_S} \ar[rr]^{e} \ar@{ >->}[dl]_{C_t^0f} \ar@{}[dr]|(.8){\ulcorner} & & X \ar@{ >->}[dl]_{C_t^1 f} \ar@{ >->}'[dd]+U*{}^{h} & \\ \cdot \ar[rr]^(.7){p} \ar[dd]_{i_D} & & R^1 f \ar@{-->}[dd]^(.3){\xi^1} & & \\ & \cdot \ar@{ >->}[dl]_(.3){C^0g} \ar'[r]_{e'}[rr] \ar@{}[dr]|(.8){\ulcorner}&& Y \ar@{ >->}[dl]^{C^1g} & \\ W \ar[rr]_{p'} & & Q^1 g  & & }}\end{equation} The notation $(i_S, i_D)$ is meant to evoke the inclusions of the indexing sets for the coproducts of spheres and disks, with the familiar generating set of cofibrations $\{S^{n-1} \ra D^n\}$ in mind. The map $\xi^1$ is induced by the universal property of the top pushout.

We begin by defining the pushout in the right face of the cube (\ref{lemmacube}).
$${\small \xymatrix@=15pt{ R^1f \ar[ddd]_{\xi^1} \ar@{ >->}[ddr]^l & & & X \ar[lll]_{C^1_tf} \ar@{ >->}[ddd]^h \ar@{}[ddll]|(.9){\urcorner} \\ \\ & P^1g \ar@{-->}[dl]^w \\ Q^1g & & & Y \ar[ull]_k \ar[lll]^{C^1g} }}$$
Because $h$ is a cofibration, $l$ is as well. If $h$ is a coalgebra, then $l$ inherits a canonical coalgebra structure as a pushout of $h$. Because cofibrations and coalgebras for the comonad of an awfs are closed under composition, it suffices to show that $w$ is a cofibration, and a coalgebra whenever $h$ is a coalgebra. Actually $w$ is always a coalgebra. We will use:

\begin{lem}\label{pushoutlemma} Given a commutative cube in which the top and bottom faces are pushouts, form the pushouts in the left and right faces, as in the diagram below.
$${\small \xymatrix@=.75pc{ & && \cdot \ar[rrrr] \ar[dddlll] \ar'[ddd][dddd] \ar@{}[dddddll]|(.95){\urcorner} \ar@{}[dddr]|(.95){\ulcorner}& &&  & \cdot \ar[dddlll] \ar[dddd] \ar@{}[dddddll]|(.95){\urcorner} \\ & & && && & \\ &&  && && &  \\ \cdot \ar[dddd] \ar[rrrr] \ar[ddr] & && & \cdot \ar[dddd] \ar[ddr] &  & & \\ &  & & \cdot \ar[dll] \ar'[r][rrrr] \ar'[dl][dddlll] \ar@{}[dddr]|(.87){\ulcorner}& & & & \cdot \ar[dll] \ar[dddlll] \\   & \cdot \ar@{-->}[ddl] \ar@{-->}'[rrr][rrrr] & &&  & \cdot \ar@{-->}[ddl] & &  \\& & &&&&& \\ \cdot \ar[rrrr] & & & & \cdot && &}}$$
Then the square created by these pushouts (with three edges dotted in the diagram) is itself a pushout square.
\end{lem}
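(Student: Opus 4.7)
My plan is to apply the pasting lemma for pushouts twice. Label the cube's vertices $A, B, C, D$ on the top face, with top-face pushout $D = C \sqcup_A B$, and $A', B', C', D'$ on the bottom face, with bottom-face pushout $D' = C' \sqcup_{A'} B'$; the vertical edges run $A \to A'$, $B \to B'$, $C \to C'$, $D \to D'$. Denote by $P = C \sqcup_A A'$ the pushout of the left face and $Q = D \sqcup_B B'$ the pushout of the right face, with induced arrows $P \to C'$, $Q \to D'$, $P \to Q$. The goal is to show that the square with vertices $(P, Q, C', D')$ and these induced maps is a pushout square.

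First I rewrite $Q$ in a more useful form. Pasting the top-face pushout with the right-face pushout along the shared edge $B \to D$ identifies $Q \cong C \sqcup_A B'$. Now view this presentation as a horizontal rectangle, and place the left-face pushout square (which defines $P$) as its left sub-square, pasted along the common edge $A \to C$. Since both the left sub-square and the total rectangle are pushouts, the pasting lemma gives that the right sub-square is a pushout; that is, $Q \cong P \sqcup_{A'} B'$.

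Second, I paste this square $Q \cong P \sqcup_{A'} B'$ vertically above the candidate square $(P \to Q, P \to C', Q \to D', C' \to D')$ along the shared edge $P \to Q$. A short check using the universal property of $P$ verifies that the composite $A' \to P \to C'$ coincides with the cube edge $A' \to C'$, so the total rectangle is precisely the bottom face of the cube. The bottom face is a pushout by hypothesis, and the top sub-square is a pushout by the previous paragraph, so the pasting lemma gives that the bottom sub-square---which is the candidate square---is a pushout.

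The main obstacle is purely notational bookkeeping: matching the orientations of the six faces of the cube and verifying that the arrows induced by the universal properties of $P$ and $Q$ agree with the edges of the cube along which we paste. No deeper difficulty arises, since the result reduces via two direct applications of the pasting lemma to the fact that the bottom face of the cube is itself a pushout.
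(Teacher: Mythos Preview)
Your proof is correct and is essentially the diagram chase the paper alludes to, made explicit via two applications of the pasting lemma for pushouts. The paper offers no details beyond ``Easy diagram chase,'' so there is nothing further to compare; your decomposition $Q \cong C \sqcup_A B' \cong P \sqcup_{A'} B'$ followed by cancellation against the bottom face is the natural way to organize that chase.
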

\begin{proof} Easy diagram chase.
\end{proof}

By Lemma \ref{pushoutlemma}, $w$ is a pushout of the map $c$ in the diagram below,
$${\small \xymatrix@=15pt{ \cdot \ar[ddd]_{i_D} \ar@{ >->}[ddr] & & & \cdot \ar[lll]_{C^0_tf} \ar[ddd]^{i_S} \ar@{}[ddll]|(.9){\urcorner} \\ \\ & P \ar@{-->}[dl]^c \\ \cdot & & & \cdot \ar@{ >->}[ull] \ar@{ >->}[lll]^{C^0g} }}$$
so it remains to show that this is a cofibration and coalgebra. 

In the case where $\J$ and $\I$ are discrete, $C^0_tf$ is the disjoint union of arrows $j$ of $\J$ over commutative squares from $j$ to $f$, and $C^0g$ is the disjoint union of arrows $i$ of $\I$ over squares from $i$ to $g$. The square $(i_S, i_D)$ maps the first coproduct into the second, sending $j$ to its image under the functor $\t \colon \J \ra \I$ and a square from $j$ to $f$ to the composite of this square with $(h,1) \colon f \Ra g$. As $h$ is monic, $(i_S, i_D)$ is an inclusion, so we can separate the coproduct $C^0g$ into the image of this inclusion and the rest. If we write $h_* \colon \Sq(\J,f) \ra \Sq(\I,g)$ for the (injective) function that takes a square from $j$ to $f$ for some $j \in \J$ and composes with $(h,1)$ to get a square from $\t j$ to $g$, then
$$C^0 g = \left( \bigsqcup_{\Sq(\J,f)} \t j \right) \sqcup \left( \bigsqcup_{\Sq(\I,g) \backslash \text{im}\, h_*} i \right)$$
With this notation, $C^0g$ factors through $P$ as the composite of first the arrow $$\left( \bigsqcup_{\Sq(\J,f)} \t j \right) \sqcup \left( {\bigsqcup_{\Sq(\I,g) \backslash \text{im}\, h_*}} 1_{\dom i} \right)  \text{then} \left( \bigsqcup_{\Sq(\J,f)} 1_{\cod \t j} \right) \sqcup \left( \bigsqcup_{\Sq(\I,g) \backslash \text{im}\, h_*} i \right).$$
This second arrow is $c$, which gets a canonical $\CC$-coalgebra structure as a coproduct of arrows in $\I$ with identities, which are always coalgebras.

The proof of the general case where $\J$ and $\I$ are not discrete is similar. In this case, $C^0_tf$ is a quotient of the disjoint union described above, and similarly for $C^0g$. But $C^0g$ can still be separated into the disjoint union of the image of $(i_S,i_D)$ and its complement by the hypotheses we made on the inclusion $\J \ra \I$. So $c$ has essentially the same description as above, except it is a quotient of a coproduct. This time, $c$ is a colimit of a diagram whose objects are either identities or generating cofibrations, so to prove $c$ is a coalgebra we must check that the maps of the diagram are maps of coalgebras. This is true because the morphisms in the formula for a coend are either arrows of $\I$, which are canonically coalgebra maps, or they are coproduct inclusions, which are always coalgebra maps. In any case, the above conclusion still stands: $c$ is canonically a coalgebra, so $w$ is as well. Hence $\xi^1$ is a cofibration, and a $\CC$-coalgebra when $h$ is.
\end{proof}

\begin{rmk} It is possible to prove directly that $\xi^1$ is a cofibration by showing that it lifts against all trivial fibrations. But this proof can only show $\xi^1$ is a cofibration, not that it has a $\CC$-coalgebra structure when $h$ does, and it is this stronger fact that we will need in the proof of Theorem \ref{cofthm}.
\end{rmk}

\begin{rmk}\label{caveatrmk} The argument of Lemma \ref{coflem} holds more generally than stated. In particular, it is not necessary that the arrows in the positions of $C^0_tf$ and $C^0g$ be coends over \emph{all} possible squares. As long as these arrows are constructed as coends over \emph{some} squares such that $(i_S,i_D)$ is an inclusion, the conclusion follows. In applications, we will often require this slightly more general result, for reasons that will become clear in a moment.
\end{rmk}

We will now use Lemma \ref{coflem} to prove Theorem \ref{cofthm}. Our proof used a modified version of the small object argument, suggested by Richard Garner in private communication, that can be used whenever the elements of the left class of the underlying wfs are monomorphisms. Steps zero and one, as depicted in (\ref{step0and1}) are the same as before. At this point, Quillen's small object argument has us freely attach ``cells'' to fill ``spheres'' in the object $R^1f$ by repeating steps zero and one for the map $F^1f$. Garner's small object argument does the same thing, but then takes a coequalizer to identify the ``cells'' in the ``spheres'' that were filled twice, once in step one and once in step two. In the modified version, we never attach these extraneous ``cells'' at all; instead,  we only attach ``cells'' to fill ``spheres'' in $R^1f$ that weren't filled already in step one. For this modification to work, it is essential that the cofibrations are monomorphisms; otherwise, ``cells'' that are needed to fill ``spheres'' at some intermediate stage may become redundant later. With this modification, solutions to lifting problems $\J \boxslash Ff$ factor uniquely through a minimal stage; colloquially every ``sphere'' that is filled in the object $Rf$ is filled at some minimal step and is filled uniquely at this step. This gives a new form of the small object argument that produces the same factorizations as in Garner's version but with no need for taking coequalizers, which are the most difficult to manipulate. This modification of the small object argument was independently suggested by  \cite{radulescubanu} in the special case of fibrant replacement.

\begin{proof}[Proof of Theorem \ref{cofthm}] We use the preceding lemma and transfinite induction. Applying Lemma \ref{coflem} in the case $h= 1_X$ and $f=g$ shows that $\xi^1_f \colon R^1 f \ra Q^1f$ is a cofibration and a $\CC$-coalgebra. Because the cofibrations are assumed to be monomorphisms, we may use the modified version of Garner's small object argument described above. In the modified version, step two applies factorizations that are similar to the step one factorizations to each of the vertical morphisms in the triangle \begin{equation}\label{iterativetriangle}\xymatrix{ R^{\b}f \ar@{ >->}[rr]^{\xi^{\b}_f} \ar[dr]_{F^{\b}f} & & Q^{\b} f \ar[dl]^{F_t^{\b}f} \\ & Y &}\end{equation} with ordinal $\b=1$ in this case.  The difference between these factorizations and the step one factorizations is that some squares are left out of the step zero coproducts. By Remark \ref{caveatrmk}, we nonetheless deduce that $\xi^2_f$ is a $\CC$-coalgebra. Likewise, applying Lemma \ref{coflem} to the triangles (\ref{iterativetriangle}) produced at each stage, we conclude that each map $\xi_f^{\b+1} \colon R^{\b+1} f \ra Q^{\b+1} f$ is a $\CC$-coalgebra, assuming $\xi_f^{\b}$ is.

For limit ordinals $\a$, the maps $R^{\a} f \ra Q^{\a} f$ are created as colimits of the diagrams of the $\xi^{\b}_f\colon R^{\b}f \ra Q^{\b}f $ for ordinals $\b < \a$, which by the inductive hypothesis are cofibrations and $\CC$-coalgebras.
As usual, we must check that the morphisms $\xi_f^{\b} \Ra \xi_f^{\b+1}$ in this diagram are maps of coalgebras. When we apply Lemma \ref{coflem} to (\ref{iterativetriangle}), the square $\xi_f^{\b} \Ra \xi_f^{\b+1}$ is the right face of the cube (\ref{lemmacube}), reproduced below 
$$\xymatrix@=15pt{ \cdot \ar[ddd]_{\xi^{\b+1}} \ar@{ >->}[ddr]^l & & & \cdot \ar[lll]_{c} \ar@{ >->}[ddd]^{\xi^{\b}} \ar@{}[ddll]|(.9){\urcorner} \\ \\ & \cdot \ar@{-->}[dl]^w \\ \cdot & & & \cdot \ar[ull]_k \ar[lll]^{d} }$$
The arrow $l$ inherits its coalgebra structure as a pushout of $\xi^{\b}$, and this construction makes $(c,k)$ a map of coalgebras. Similarly, $\xi^{\b+1}$ inherits its coalgebra structure as a composite of the coalgebras $l$ and $w$, and this construction makes $(1,w)$ a map of coalgebras. The morphism $(c,d) \colon \xi^{\b}_f \Ra \xi^{\b+1}_f$ is a composite of maps of coalgebras, and hence a map of coalgebras. Hence the colimit $\xi^{\a}_f$ has a canonical coalgebra structure created by this diagram. By transfinite induction, the comparison map $\xi$ is a pointwise cofibration with each component a $\CC$-coalgebra.
\end{proof}

\subsection{Algebraically fibrant objects revisited}  

One consequence of Lemma \ref{coflem} and the proof of Theorem \ref{cofthm} is the following corollary, which says that fibrant replacement monads that are constructed algebraically preserve certain trivial cofibrations, assuming the trivial cofibrations are monomorphisms. 

\begin{cor}\label{fibrantcor} Let $\M$ be a category that permits the small object argument equipped with a model structure such that the trivial cofibrations are monomorphisms. Suppose there exists a category $\J$ of trivial cofibrations that detects algebraically fibrant objects, in the sense that an object $X$ is fibrant if and only if $X \ra *$ underlies some object of $\J^{\boxslash}$, and let $\R$ be the fibrant replacement monad on $\M$ induced from the awfs $(\CC_t,\FF)$ generated by $\J$. Then if $f \colon X \ra Z$ is a $\CC_t$-coalgebra, $Rf \colon RX \ra RZ$ is a $\CC_t$-coalgebra.
\end{cor}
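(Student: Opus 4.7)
The plan is a transfinite induction modeled directly on the proof of Theorem \ref{cofthm}, but applied to the single awfs $(\CC_t, \FF)$. The key observation is that Lemma \ref{coflem} remains valid when $\J = \I$ via the identity inclusion (which trivially satisfies the required decomposition with $\I' = \emptyset$); in this specialization it says that, given a triangle $X \xrightarrow{h} Y \xrightarrow{g} Z$ with $h$ a monomorphism and a $\CC_t$-coalgebra, the induced map between step-one middle objects $R^1(gh) \to R^1 g$ of Garner's small object argument for $(\CC_t, \FF)$ is itself a $\CC_t$-coalgebra.

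Write $R^{\alpha}$ for the stage-$\alpha$ intermediate functor of this small object argument applied to $X \to *$, so that $R = R^{\beta}$ for the terminal ordinal $\beta$ of convergence; the corollary's map $Rf \colon RX \to RZ$ is the colimit of the functorially induced maps $R^{\alpha}f \colon R^{\alpha}X \to R^{\alpha}Z$. I claim, by transfinite induction on $\alpha$, that each $R^{\alpha}f$ carries a $\CC_t$-coalgebra structure and that the transition maps $R^{\alpha}f \Rightarrow R^{\alpha+1}f$ are coalgebra morphisms. The base case applies the specialized Lemma \ref{coflem} to the triangle $X \xrightarrow{f} Z \to *$ with $h = f$. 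For the successor step, apply the same lemma --- in the generalized form noted in Remark \ref{caveatrmk}, to accommodate the modified small object argument that only fills unfilled ``spheres'' --- to the triangle $R^{\alpha}X \xrightarrow{R^{\alpha}f} R^{\alpha}Z \to *$; the hypothesis on the left edge is supplied by the inductive assumption, together with the fact that a $\CC_t$-coalgebra is a trivial cofibration and hence a monomorphism by the blanket assumption of the corollary.

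For limit ordinals $\alpha$, the map $R^{\alpha}f$ is the colimit in $\M^{\bf 2}$ of the diagram of its predecessors, and Proposition \ref{closureprop} produces the desired coalgebra structure provided the diagram factors through $\Ctalg$. Verifying that the transition squares $R^{\beta}f \Rightarrow R^{\beta+1}f$ are coalgebra maps proceeds as in the final paragraph of Theorem \ref{cofthm}: $R^{\beta+1}f$ is the composite of a pushout of $R^{\beta}f$ with an auxiliary coalgebra, and both the pushout square and the composite inclusion are coalgebra morphisms by the basic facts recalled in Remark \ref{coalgfactsrmk}. Convergence at some ordinal $\beta$ is guaranteed since $\M$ permits the small object argument, and the modified variant avoiding coequalizers is available thanks to monomorphicity of trivial cofibrations. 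No new ideas are needed beyond those in the proof of Theorem \ref{cofthm}; the main conceptual point is the specialization of Lemma \ref{coflem} to $\J = \I$, with the monomorphicity hypothesis on the left edge preserved inductively because each $R^{\alpha}f$ is by construction a trivial cofibration.
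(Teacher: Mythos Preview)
Your proposal is correct and follows essentially the same approach as the paper's own proof. The paper gives only a two-sentence argument: apply Lemma \ref{coflem} to the triangle $X \xrightarrow{f} Z \to *$ with both awfs equal to $(\CC_t,\FF)$, then observe that the inductive process is the same as in the proof of Theorem \ref{cofthm}; you have simply unpacked this sketch in full, including the successor, limit, and transition-map verifications.
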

\begin{proof}
The arrow $Rf$ is constructed by an inductive process, analogous to the construction of the comparison map, that begins by applying Lemma \ref{coflem} to the triangle $$\xymatrix{ X \ar@{ >->}[rr]^f \ar[dr] & & Z \ar[dl] \\ & {*} & }$$ with both awfs taken to be $(\CC_t,\FF)$.
Hence this result. 
\end{proof}

The algebras for the monad $\R$ are precisely the ``algebraically fibrant objects'' of $\M$, i.e., objects with chosen lifts against the generators, subject to any coherence conditions imposed by morphisms in the category $\J$. By Lemma \ref{retractlem} every fibrant object has some algebra structure making it an algebraically fibrant object. It always suffices to take $\J$ to be the generating trivial cofibrations, assuming they exist, but in some examples it is preferable to use a smaller generating category. 

As for any category of algebras for a monad, we have an adjunction $$\xymatrix{\M \ar@<1ex>[r]^-T \ar@{}[r]|-{\perp} & \Ralg \ar@<1ex>[l]^-U}$$ where $T$ takes an object $X \in \M$ to the free $\R$-algebra $(RX, \mu_X)$. In practice, the category $\Ralg$ may fail to be cocomplete, in which case it is not a suitable category for a model structure. But when $\Ralg$ is cocomplete, as is the case when $\M$ is locally presentable and $\R$ arises from a cofibrantly generated awfs for example, Corollary \ref{fibrantcor} provides some hope that one could build a model structure on $\Ralg$ such that $T \dashv U$ is a Quillen adjunction. One feature of such a model structure is that its objects would all be fibrant. In fact, it follows easily that any such Quillen adjunction is in fact a Quillen equivalence.  

One such example is the Quillen model structure on  $\Ch_{A}$, the category of chain complexes of $A$-modules for some commutative ring $A$, though this example is rather unsatisfactory because the objects of $\Ch_{A}$ are already fibrant. More interestingly, Thomas Nikolaus has proven that the categories of algebraic Kan complexes and algebraic quasi-categories can be given a model structure, lifted in the first case from Quillen's and the second from Joyal's model structure on simplicial sets \cite{nikolausalgebraic}. For the latter case, we prefer to let the set $\J$ in Corollary \ref{fibrantcor} be the inner horn inclusions, rather than the generating trivial cofibrations. For all of these examples, Theorems \ref{adjthm} and \ref{adjthm5} imply that the resulting model structures are algebraic and the Quillen equivalences are algebraic Quillen adjunctions.

\section{Adjunctions of awfs}\label{adjsec}

We now return to the material previewed in Section \ref{algquillenssec}. In this section, we study adjunctions of awfs, which we define precisely below. To motivate this definition, we consider an important class of examples: suppose $\J$ generates an awfs $(\CC,\FF)$ on $\M$ and $T\J$ generates an awfs $(\L,\R)$ on $\K$, where $T\colon \M \ra \K$ is the left adjoint of a specified adjunction. A main task of this section is to prove Theorem \ref{adjawfsthm}, which says that there is a canonical adjunction of awfs in this situation.

A direct proof is possible but technically difficult. Instead, we present a more conceptual, though somewhat circuitous argument, that is nonetheless shorter. After preliminary explorations, we reintroduce the three notions of morphisms between awfs on different categories, each extending Definition \ref{morawfsdefn}. In order to prove Theorem \ref{adjawfsthm}, we use Theorem \ref{compcharthm}, which says that an awfs $(\L,\R)$ is equivalently characterized by a natural composition law on the category of algebras for a monad over cod. We prove a lemma that allows us to use this recognition principle to easily identify lax morphisms of awfs, which for categorical reasons, suffices to prove Theorem \ref{adjawfsthm}.

However, Theorem \ref{adjawfsthm} is not quite strong enough to prove Theorem \ref{adjthm5}, establishing the existence of an important class of algebraic Quillen adjunctions. In order to prove the naturality component of this result, we must show that the ``unit'' functors (\ref{freediag}) constructed in Garner's small object argument satisfy a stronger universal property than was previously known. In \cite{garnerunderstanding}, Garner shows that these functors are universal among morphisms of awfs. In Section \ref{cobssec}, we show that they are universal among all adjunctions of awfs, a result that should be of independent categorical interest. In particular, it follows that two canonical methods of assigning coalgebra structures to generating cofibrations in the image of a left adjoint are the same. 

\subsection{Algebras and adjunctions}\label{algssec}

Consider an adjunction $\xymatrix{ T \colon \M \ar@<1ex>[r] \ar@{}|{\perp}[r] & \K \colon S \ar@<1ex>[l]}$ where $\J$ generates an awfs $(\CC,\FF)$ on $\M$ and $T\J$ generates an awfs $(\L,\R)$ on $\K$. If  $(\ol{\C},\F)$ is the wfs underlying $(\CC,\FF)$ and $(\ol{\cL},\cR)$ is the wfs underlying $(\L,\R)$, then $$T(\ol{\C}) \subset \ol{\cL} \hspace{.5cm} \mathrm{and} \hspace{.5cm} S(\cR) \subset \F$$ because the defining lifting properties are adjunct.

The next few sections work towards an algebraization of this result. Because the awfs are cofibrantly generated, it will be considerably easier to prove statements involving the categories of algebras. 

\begin{thm}\label{Sthm} For any adjunction $\xymatrix{ T \colon \M \ar@<1ex>[r] \ar@{}|{\perp}[r] & \K \colon S \ar@<1ex>[l]}$ where a small category $\J$ generates an awfs $(\CC,\FF)$ on $\M$ and $T\J$ generates an awfs $(\L,\R)$ on $\K$, the right adjoint $S$ lifts to a functor $\raisebox{.25in}{\xymatrix{ \Ralg \ar@{-->}[r]^{\tilde{S}} \ar[d]_U & \Falg \ar[d]^U \\ \K^{\bf 2} \ar[r]^S & \M^{\bf 2}}}$
\end{thm}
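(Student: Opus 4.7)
The plan is to reduce the problem to the cofibrantly generated description of the algebra categories and then construct the lift by transposition across the adjunction. Since both awfs are cofibrantly generated (with $(\CC,\FF)$ generated by $\J$ and $(\L,\R)$ generated by $T\J$), Theorem~\ref{cgthm} and the defining isomorphism~(\ref{algfreeiso}) give isomorphisms $\Falg \cong \J^{\boxslash}$ and $\Ralg \cong (T\J)^{\boxslash}$ over the respective arrow categories. So it suffices to produce a functor $(T\J)^{\boxslash} \to \J^{\boxslash}$ over the functor $S \colon \K^{\bf 2} \to \M^{\bf 2}$.

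Given $(g,\phi) \in (T\J)^{\boxslash}$, I would define $\tilde S(g,\phi) := (Sg, \phi^{\sharp})$ by transposition. For a lifting problem $(u,v) \colon j \Ra Sg$ in $\M^{\bf 2}$ with $j \in \J$, write $(u^{\flat}, v^{\flat}) \colon Tj \Ra g$ for its transpose in $\K^{\bf 2}$ and set
\[
\phi^{\sharp}(j,u,v) \;:=\; S\bigl(\phi(Tj, u^{\flat}, v^{\flat})\bigr) \cdot \eta_{\cod j},
\]
the transpose of the $\K$-valued lift $\phi(Tj,u^{\flat},v^{\flat})$ across $T\dashv S$. That this is a genuine solution to the original square follows from naturality of the unit and counit together with the fact that $\phi(Tj,u^{\flat},v^{\flat})$ solves the transposed lifting problem; this is the standard mate correspondence for adjoint lifting data.

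Next I would check coherence of $\phi^{\sharp}$ with morphisms in $\J$. Given $(a,b)\colon j' \Ra j$, its image under $T\J \to \K^{\bf 2}$ is $(Ta,Tb)\colon Tj' \Ra Tj$, and an easy naturality calculation shows that the transposes of $(ua, vb)$ are $(u^{\flat}\cdot Ta, v^{\flat}\cdot Tb)$. The coherence axiom for $\phi$ then gives $\phi(Tj', u^{\flat}\cdot Ta, v^{\flat}\cdot Tb) = \phi(Tj, u^{\flat}, v^{\flat}) \cdot Tb$, and taking transposes yields exactly $\phi^{\sharp}(j', ua, vb) = \phi^{\sharp}(j, u, v) \cdot b$. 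Functoriality on morphisms of $(T\J)^{\boxslash}$ is similarly routine: a morphism $(h,k) \colon (g,\phi) \to (g',\phi')$ preserves lifting data if and only if its transpose $(Sh, Sk)$ preserves the transposed lifting data, because $\phi$ and $\phi'$ are related by $h$ via a $\K$-equation which transposes to the corresponding $\M$-equation for $\phi^{\sharp}$ and $\phi'^{\sharp}$.

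The construction is essentially forced and each individual verification is a short adjunction diagram chase; the only mild obstacle is the bookkeeping around the unit/counit insertions when verifying that $\phi^{\sharp}$ is indeed a lift and that coherence is preserved. Composing with the isomorphisms $\Ralg \cong (T\J)^{\boxslash}$ and $\J^{\boxslash} \cong \Falg$ then yields the desired lift $\tilde S \colon \Ralg \to \Falg$ over $S$, completing the proof.
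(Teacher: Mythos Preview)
Your proposal is correct and follows essentially the same approach as the paper: use the isomorphisms $\Ralg \cong (T\J)^{\boxslash}$ and $\Falg \cong \J^{\boxslash}$ from algebraic-freeness and transport lifting functions across the adjunction via transposition. You actually supply more detail than the paper does---the explicit formula for $\phi^{\sharp}$, the coherence check against morphisms of $\J$, and the functoriality verification---whereas the paper dispatches these with a one-line appeal to naturality of the adjunction.
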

\begin{proof}
Because the awfs are cofibrantly generated, we have isomorphisms $\Ralg \cong T\J^{\boxslash}$ and $\Falg \cong \J^{\boxslash}$ that commute with the forgetful functors to the underlying arrow categories. Using the notation of the proof of Theorem \ref{adjthm}, let $(f,\psi) \in T\J^{\boxslash}$ and define $\tilde{S}(f,\psi) := (Sf, \psi^{\sharp}) \in \J^{\boxslash}$. Given $(u,v) \colon (f,\psi) \ra (g,\phi) \in T\J^{\boxslash}$,  $$\tilde{S}(u,v):=(Su, Sv) \colon (Sf, \psi^{\sharp}) \ra (Sg, \phi^{\sharp})$$ is a morphism in $\J^{\boxslash}$ by naturality of the adjunction. This defines the desired lift.
\end{proof}

There is a well-known categorical result \cite[Lemma 1]{johnstoneadjoint}, which says that lifts of $S$ to functors $\tilde{S} \colon \Ralg \ra \Falg$ are in bijective correspondence with natural transformations $\vec{\rho} \colon FS \Ra SR$ satisfying \begin{equation}\label{vecrhocond}\raisebox{.27in}{\xymatrix{ & S \ar[dl]_{\vec{\eta}_S} \ar[dr]^{S\vec{\eta}}\\ FS \ar[rr]^{\vec{\rho}} & & SR }}\hspace{.5cm} \text{and} \hspace{.5cm}  \raisebox{.55in}{\xymatrix@C=2pt{  & & FSR \ar[drr]^{\vec{\rho}_R} & & \\ FFS \ar[urr]^{F\vec{\rho}} \ar[dr]_{\vec{\mu}_S} & & & & SRR \ar[dl]^{S\vec{\mu}} \\ & FS \ar[rr]^{\vec{\rho}} & & SR &}}\end{equation} A pair $(S,\vec{\rho})$ satisfying these conditions is called a \emph{lax morphism of monads}.

Let $Q \colon \M^{\bf 2} \ra \M$ and $E\colon \K^{\bf 2} \ra \K$ be the functors accompanying the functorial factorizations of $(\CC,\FF)$ and $(\L,\R)$, respectively.  Because $F$ and $R$ are monads over cod, $\vec{\rho}= (\rho,1)$ where $\rho \colon QS \Ra SE$ is a natural transformation satisfying \begin{equation}\label{rhocond}\raisebox{.27in}{\xymatrix{ \dom S \ar[d]_{CS} \ar[r]^{SL} & SE \ar[d]^{SR} \\ QS \ar[ur]_{\rho} \ar[r]_{FS} & \cod S}}  \hspace{.5cm} \text{and} \hspace{.5cm} \raisebox{.55in}{\xymatrix@C=2pt{  & & QSR \ar[drr]^{\rho_R} & & \\ QFS \ar[urr]^{Q(\rho,1)} \ar[dr]_{\mu_S} & & & & SER \ar[dl]^{S\mu} \\ & QS \ar[rr]^{\rho} & & SE &}}\end{equation} The functor $\tilde{S}$ is defined by mapping the $\R$-algebra $(g, t \colon Eg \ra \dom\, g)$ to the $\FF$-algebra $(Sg, St\cdot \rho_g \colon QSg \ra \dom, Sg)$.

If the direction of $\rho$ is reversed, a pair $(S,(\rho,1))$ satisfying diagrams analogous to (\ref{vecrhocond}) is called a \emph{colax morphism of monads}. If the monads are replaced by their corresponding comonads and the direction of $\rho$ is unchanged, a pair $(S,(1,\rho))$ satisfying diagrams analogous to (\ref{vecrhocond}) is called a \emph{lax morphism of comonads}. If the direction of $\rho$ is reversed as well, the pair $(S,(1,\rho))$ is called a \emph{colax morphism of comonads}. This last type of morphism is in bijective correspondence with lifts of $S$ to a functor between the categories of coalgebras for the comonads by the dual of the lemma mentioned above.

For the lift $\tilde{S}$ of Theorem \ref{Sthm}, it is not easy to describe $\rho$ explicitly because we cannot easily write down the inverse to the isomorphism (\ref{algfreeiso}). Surprisingly, in light of the definitions of the next sections, this will be no great obstacle. 

\subsection{Lax morphisms and colax morphisms of awfs}\label{laxcolaxdefnssec}

The statement analogous to Theorem \ref{Sthm} for the left adjoint and categories of coalgebras is considerably harder to prove. In fact, we will prove a stronger result and deduce this as a corollary. First, we establish the relevant terminology, which extends the lax and colax morphisms of monads and comonads, introduced in the last section.

For the following definitions let $(\CC,\FF)$ be an awfs on a category $\M$ with $Q\colon \M^{\bf 2} \ra \M$ the functor accompanying its functorial factorization, and let $(\L,\R)$ be an awfs on $\K$ with $E\colon \K^{\bf 2} \ra \K$ accompanying its functorial factorization.

\begin{defn}\label{laxmordefn} A \emph{lax morphism of awfs} $(S,\rho) \colon (\L, \R) \ra (\CC,\FF)$ consists of a functor $S \colon \K \ra \M$ and a natural transformation $\rho \colon QS \Ra SE$ such that $(1,\rho) \colon CS \Ra SL$ is a lax morphism of comonads and $(\rho,1) \colon FS \Ra SR$ is a lax morphism of monads, i.e., such that the following commute:
\begin{equation}\label{laxmoreq}\raisebox{.27in}{\xymatrix{ \dom S \ar[d]_{CS} \ar[r]^{SL} & SE \ar[d]^{SR} \\ QS \ar[ur]_{\rho} \ar[r]_{FS} & \cod S}}  \raisebox{.55in}{\xymatrix@C=1pt{ & QS \ar[rr]^{\rho} \ar[dl]_{\d_S} & & SE \ar[dr]^{S\d} & \\ QCS \ar[drr]^{Q(1,\rho)} & & & & SEL \\ & & QSL \ar[urr]^{\rho_L} & &}} \raisebox{.55in}{\xymatrix@C=1pt{  & & QSR \ar[drr]^{\rho_R} & & \\ QFS \ar[urr]^{Q(\rho,1)} \ar[dr]_{\mu_S} & & & & SER \ar[dl]^{S\mu} \\ & QS \ar[rr]^{\rho} & & SE &}}\end{equation}
\end{defn}

Lax morphisms of monads $(\rho,1)$ are in bijection with lifts of $S$ to functors $\tilde{S} \colon \Ralg \ra \Falg$. Lax morphisms of comonads $(1,\rho)$ are in bijection with extensions of $S$ to functors $\hat{S} \colon \coKl(\L) \ra \coKl(\CC)$.

\begin{defn}\label{colaxmordefn} A \emph{colax morphism of awfs} $(T,\g) \colon (\CC, \FF) \ra (\L,\R)$ consists of a functor $T \colon \M \ra \K$ and a natural transformation $\g \colon TQ \Ra ET$ such that $(1,\g) \colon TC \Ra LT$ is a colax morphism of comonads and $(\g,1) \colon TF \Ra RT$ is a colax morphism of monads, i.e., such that the following commute:
\begin{equation}\label{colaxmoreq}\raisebox{.27in}{\xymatrix{ \dom T \ar[d]_{TC} \ar[r]^{LT} & ET \ar[d]^{RT}  \\ TQ \ar[ur]_{\g} \ar[r]_{ TF} &  \cod T }}  \raisebox{.55in}{\xymatrix@C=1pt{ & TQ \ar[rr]^{\gamma} \ar[dl]_{T\d} & & ET \ar[dr]^{\d_T} & \\ TQC \ar[drr]^{\gamma_C} & & & & ELT \\ & & ETC \ar[urr]^{E(1,\gamma)} & &}} \raisebox{.55in}{\xymatrix@C=1pt{  & & ETF \ar[drr]^{E(\g,1)} & & \\ TQF \ar[urr]^{\g_F} \ar[dr]_{T\mu} & & & & ERT \ar[dl]^{\mu_T} \\ & TQ \ar[rr]^{\g} & & ET &}}\end{equation}
\end{defn}

Colax morphisms of comonads $(1,\g)$ are in bijection with lifts of $T$ to functors $\tilde{T} \colon \Calg \ra \Lalg$. Colax morphisms of monads $(\g,1)$ are in bijection with extensions of $T$ to functors $\hat{T} \colon \Kl(\FF) \ra \Kl(\R)$. 

\begin{ex} A morphism of awfs $\rho \colon (\CC,\FF) \ra (\L,\R)$, defined in \ref{morawfsdefn}, is simultaneously a lax morphism of awfs $(1,\rho) \colon (\L,\R) \ra (\CC,\FF)$ and a colax morphism of awfs $(1,\rho) \colon (\CC,\FF) \ra (\L,\R)$. Conversely, every lax or colax morphism of awfs over an identity functor is a morphism of awfs.
\end{ex}

Lax and colax morphisms of awfs can be identified by the following recognition principle, which extends the material of Section \ref{compssec}.

\begin{lem}\label{laxawfslem} Suppose $(\L,\R)$ and $(\CC,\FF)$ are awfs and $(S,\vec{\rho}) \colon \R \ra \FF$ is a lax morphism of monads corresponding to a lift $\tilde{S} \colon \Ralg \ra \Falg$ of $S$. Then $(S,\rho) \colon (\L,\R) \ra (\CC,\FF)$ is a lax morphism of awfs if and only if $\tilde{S}$ preserves the canonical composition of algebras. Dually, a colax morphism between the comonads of awfs is a colax morphism of awfs if and only if the lifted functor preserves the canonical composition of coalgebras.
\end{lem}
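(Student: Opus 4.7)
The plan is to exploit Theorem \ref{compcharthm}, which says that the comultiplication $\d \colon E \Ra EL$ of $\L$ is equivalent data to the natural composition on $\Ralg$: explicitly, $\d^{\L}_f$ is recovered from the algebra structure $\mu_f \bullet \mu_{Lf}$ on the composite of free algebras $(RLf, \mu_{Lf})$ and $(Rf, \mu_f)$ via the identity $\d^{\L}_f = (\mu_f \bullet \mu_{Lf}) \cdot E(L^2 f, 1)$. Consequently, the extra condition distinguishing a lax morphism of awfs from a lax morphism of monads --- the middle diagram of (\ref{laxmoreq}) --- should translate directly into preservation of composition by $\tilde{S}$, and the dual characterization for colax morphisms of comonads will follow by formal duality.

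For the forward direction, I would unpack both sides of $\tilde{S}((g,t) \bullet (f', s')) = \tilde{S}(g,t) \bullet \tilde{S}(f', s')$ for composable $(f', s'), (g, t) \in \Ralg$ using the explicit composition formula in $\Ralg$ (involving $\d$ for $\L$) and in $\Falg$ (involving $\d$ for $\CC$). Both sides are algebra structures on $Sg \cdot Sf'$, so the statement is equality of two arrows $QS(gf') \ra \dom Sf'$. One can be transformed into the other by the following sequence: first, apply the comultiplication-compatibility diagram of (\ref{laxmoreq}) at $gf'$; next, apply naturality of $\rho$ at the square $(1, t \cdot E(f', 1)) \colon L(gf') \Ra f'$; then apply naturality of $\rho$ at $(f', 1) \colon gf' \Ra g$; finally, use functoriality of $Q$ to combine adjacent applications of $Q$ to squares.

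For the converse, I would specialize preservation of composition to the free-algebra pair $(RLf, \mu_{Lf}), (Rf, \mu_f)$ for arbitrary $f \in \K^{\bf 2}$. The identity $S\d^{\L}_f = S(\mu_f \bullet \mu_{Lf}) \cdot SE(L^2 f, 1)$ from the proof of Theorem \ref{compcharthm}, combined with the just-specialized preservation equation, naturality of $\rho$ at $(L^2 f, 1) \colon f \Ra Rf \cdot RLf$, and naturality of $\d^{\CC}$ at $(SL^2 f, 1)$, transports the preservation equality to the desired $S\d^{\L}_f \cdot \rho_f = \rho_{Lf} \cdot Q(1, \rho_f) \cdot \d^{\CC}_{Sf}$. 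Since $f$ is arbitrary, this is the comultiplication compatibility for $\rho$ as a natural transformation. The dual statement for colax morphisms of comonads follows by formal duality, using the dual of Theorem \ref{compcharthm}.

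The principal obstacle is the bookkeeping in both diagram chases: the composition formula weaves together $\d$, the monad structure on the algebras, and $E$ (resp.\ $Q$) applied to nontrivial squares, and the converse additionally requires transporting the conclusion from the argument $Rf \cdot RLf$ back to $f$. The cleanest route to avoid errors is to work with the adjunct-of-free-algebra description of $\d$ from the proof of Theorem \ref{compcharthm}, so that each step of the argument is an identity of morphisms in the appropriate Eilenberg-Moore category rather than an explicit arrow-wise calculation.
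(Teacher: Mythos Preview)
Your proposal is correct and follows essentially the same approach as the paper: the forward direction unpacks both composites and invokes the comultiplication-compatibility diagram together with naturality of $\rho$, while the converse specializes preservation of composition to the free-algebra pair $(RLf,\mu_{Lf}),(Rf,\mu_f)$ and uses the formula $\d_f=(\mu_f\bullet\mu_{Lf})\cdot E(L^2f,1)$ from Theorem~\ref{compcharthm} to recover the middle diagram of (\ref{laxmoreq}). The paper's explicit chase for the converse also uses the monad triangle identity twice to simplify intermediate expressions, a routine step your sketch subsumes under ``bookkeeping.''
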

\begin{proof}
Suppose $(S,\rho)$ is a lax morphism of awfs and let $(f,s)$ and $(g,t)$ be composable $\R$-algebras. By definition \begin{align*}\tilde{S}(g,t) \bullet \tilde{S}(f,s) &= (Sg \cdot Sf, (St \cdot \rho_g) \bullet (Ss \cdot \rho_f)) \\ &= (S(gf), Ss \cdot \rho_f \cdot Q(1, St \cdot \rho_g \cdot Q(Sf,1)) \cdot \d_{S(gf)}), \intertext{while} \tilde{S}((g,t) \bullet(f,s)) &= \tilde{S} (gf, s \cdot E(1,t\cdot E(f,1)) \cdot \d_{gf}) \\ &= (S(gf), Ss \cdot SE(1,t \cdot E(f,1)) \cdot S\d_{gf} \cdot \rho_{gf})  .\end{align*}
The diagram
$$\xymatrix{ & SEgf \ar[rr]^{S\d_{gf}} & & SELgf \ar[dr]^{SE(1,t\cdot E(f,1))} & \\ QSgf \ar[ur]^{\rho_{gf}} \ar[dr]_{\d_{S(gf)}} & & QSLgf \ar[ur]_{\rho_{Lgf}} \ar[dr]^{Q(1,St \cdot SE(f,1))} & & SEf \\ & QCSgf \ar[rr]_{Q(1,St \cdot SE(f,1) \cdot \rho_{gf})} \ar[ur]^{Q(1,\rho_{gf})} & & QSf \ar[ur]_{\rho_f} & }$$
which commutes by (\ref{laxmoreq}) and naturality of $\rho$ shows that both algebra structures are the same.

Conversely, we must show that the center diagram of $(\ref{laxmoreq})$ commutes if the functor $\tilde{S}$ defined via $\rho$ preserves composition of algebras. The proof requires a straightforward diagram chase:
\begin{align*} S\d \cdot \rho &= S(\mu \bullet \mu_L) \cdot SE(L^2,1) \cdot \rho & \text{definition of $\d$}\\ &= S (\mu \bullet \mu_L) \cdot \rho_{R\cdot RL} \cdot Q(SL^2,1) & \text{naturality of $\rho$} \\ &= ((S\mu \cdot \rho_R) \bullet (S \mu_L \cdot \rho_{RL})) \cdot Q(SL^2,1) & \text{$\tilde{S}$ preserves composition} \\ &= S\mu_L\cdot \rho_{RL} \cdot Q(1,S\mu) \cdot Q(1,\rho_R)& \\ &\quad\cdot Q(1,Q(SRL,1)) \cdot \d_{SR \cdot SRL} \cdot Q(SL^2,1) & \text{defn.~of comp.~in $\Falg$} \\ &= S\mu_L \cdot \rho_{RL} \cdot Q(1,S\mu) \cdot Q(1,\rho_R) & \\ &\quad \cdot Q(1,Q(SL,1)) \cdot Q(SL^2,1) \cdot \d_S & \text{nat.~of $\d$; functoriality of $Q$} \\ &= S\mu_L \cdot \rho_{RL} \cdot Q(1,S\mu) \cdot Q(1,SE(L,1)) & \\ &\quad  \cdot Q(SL^2,1) \cdot  Q(1,\rho) \cdot \d_S &\text{nat.~of $\rho$; functoriality of $Q$} \\ &= S\mu_L \cdot \rho_{RL} \cdot Q(SL^2,1) \cdot Q(1,\rho) \cdot \d_S &\text{monad triangle identity} \\ &= S\mu_L \cdot SE(L^2,1) \cdot \rho_L \cdot Q(1,\rho) \cdot \d_S & \text{naturality of $\rho$} \\ &= \rho_L \cdot  Q(1,\rho) \cdot \d_S & \text{monad triangle identity}\ \qedhere
\end{align*}
\end{proof}

\subsection{Adjunctions of awfs}\label{adjdefnssec}

The notions of lax and colax morphisms of awfs are closely related. In fact, given a lax morphism of awfs $(S,\rho) \colon (\L,\R) \ra (\CC,\FF)$ and an adjunction $(T,S,\iota,\nu)$ where $T \dashv S$ and $\iota$ and $\nu$ are the unit and counit, there is a canonical natural transformation $\g \colon TQ \Ra ET$ such that $(T,\g) \colon (\CC,\FF) \ra (\L,\R)$ is a colax morphism of awfs. The dual result holds as well. Combining the data of the corresponding lax and colax morphisms of awfs we obtain the concept of an \emph{adjunction of awfs}, defined below. But first, we need the following categorical concept to explain the relationship between $\rho$ and $\g$.

Given functors as in the diagram $$\xymatrix{\mathcal{A} \ar@<-1ex>[d]_-F \ar@{}[d]|-{\dashv} \ar[r]^I & \mathcal{C} \ar@<-1ex>[d]_-H \ar@{}[d]|-{\dashv} \\ \mathcal{B} \ar@<-1ex>[u]_-G \ar[r]_J & \mathcal{D} \ar@<-1ex>[u]_-K}$$ with $\eta$ and $\e$ the unit and counit for $F \dashv G$ and $\iota$ and $\nu$ the unit and counit for $H \dashv K$, there is a bijection between natural transformations $$\raisebox{.25in}{\xymatrix{ \mathcal{A} \ar[d]_F \ar[r]^I & \mathcal{C} \ar@{=>}[dl]_{\a} \ar[d]^H  \\   \mathcal{B} \ar[r]_J & \mathcal{D} }} \hspace{.5cm} \text{and} \hspace{.5cm} \raisebox{.25in}{\xymatrix{ \mathcal{A} \ar[r]^I \ar@{=>}[dr]^{\beta} & \mathcal{C} \\ \mathcal{B} \ar[r]_J \ar[u]^G & \mathcal{D} \ar[u]_K }}$$ given by the formulae $$\b = KJ \e \cdot K \a_G \cdot \iota_{IG} \hspace{.5cm} \text{and} \hspace{.5cm} \a = \nu_{JF} \cdot H \beta_F \cdot HI \eta.$$ The corresponding natural transformations $\a$ and $\b$ are called \emph{mates} \cite{kellystreetreview}.

\begin{defn}\label{adjawfsdefn2} An \emph{adjunction of awfs} $(T,S,\g,\rho) \colon (\CC,\FF) \ra (\L,\R)$ consists of an adjoint pair of functors $T \dashv S$ together with mates $\g$ and $\rho$ such that $(T,\g) \colon (\CC,\FF) \ra (\L,\R)$ is a colax morphism of awfs and $(S,\rho) \colon (\L,\R) \ra (\CC,\FF)$ is a lax morphism of awfs.
\end{defn}

The natural transformations $\g$ and $\rho$ should be mates with respect to the functors 
\begin{equation}\label{thesemates}\xymatrix{\M^{\bf 2} \ar@<-1ex>[d]_-T \ar@{}[d]|-{\dashv} \ar[r]^Q & \M \ar@<-1ex>[d]_-T \ar@{}[d]|-{\dashv} \\ \K^{\bf 2} \ar@<-1ex>[u]_-S \ar[r]_E & \K \ar@<-1ex>[u]_-S}\end{equation} 

As alluded to above, the criteria on $\g$ and $\rho$ in Definition \ref{adjawfsdefn2} are overdetermined:

\begin{lem}\label{odlem} Suppose we have an adjunction $\xymatrix{ (T,S,\iota,\nu) \colon \M \ar@<.5ex>[r] & \K \ar@<.5ex>[l]}$ where $\M$ has an awfs $(\CC,\FF)$ and $\K$ has an awfs $(\L,\R)$. Let $\g$ and $\rho$ be mates with respect to the functors of \emph{(\ref{thesemates})}. Then $(S,\rho) \colon (\L,\R) \ra (\CC,\FF)$ is a lax morphism of awfs if and only if $(T,\g) \colon (\CC,\FF) \ra (\L,\R)$ is a colax morphism of awfs, in which case $(T,S,\g, \rho)$ is an adjunction of awfs.
\end{lem}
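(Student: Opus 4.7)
The plan is to deduce the lemma from the 2-categorical calculus of mates. By hypothesis, $\g$ and $\rho$ are mates with respect to the pair of adjunctions $T \dashv S \colon \M^{\bf 2} \rightleftarrows \K^{\bf 2}$ and $T \dashv S \colon \M \rightleftarrows \K$ (with units $\iota$, counits $\nu$). A fundamental property of mates is that for any pair of parallel pasting composites, one involving $\g$ and the other obtained by systematically transposing $\g$ to $\rho$ (absorbing units and counits as prescribed), one composite equals another iff the corresponding mate composite does. I will apply this principle to each of the three conditions in (\ref{laxmoreq}) and (\ref{colaxmoreq}).

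I would first verify that the left-hand ``functorial factorization'' triangles are mate-equivalent. Each splits into two triangular identities ($SL = \rho \cdot CS$ and $FS = SR \cdot \rho$ on the lax side; $LT = \g \cdot TC$ and $TF = RT \cdot \g$ on the colax side), and these are pairwise mates because $L$, $C$, $R$, $F$ are themselves built from the functorial factorization functors that appear in (\ref{thesemates}). Next, the middle diagrams express precisely that $(1,\g) \colon TC \Ra LT$ is a colax morphism of comonads and $(1,\rho) \colon CS \Ra SL$ is a lax morphism of comonads. The standard fact from the theory of doctrines (see Kelly--Street) that colax morphisms of comonads on the left adjoint correspond bijectively under mates to lax morphisms of comonads on the right adjoint gives the equivalence of the middle conditions. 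The analogous statement for monads, applied to the right diagrams, yields the equivalence of the multiplication conditions.

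Thus all three conditions for $(T,\g)$ to be a colax morphism of awfs are individually mate-equivalent to the three conditions for $(S,\rho)$ to be a lax morphism of awfs, establishing the ``iff''; combining the resulting lax and colax morphism with the mate pair $(\g,\rho)$ gives the adjunction of awfs $(T,S,\g,\rho)$. The main technical obstacle is keeping the bookkeeping straight when pasting the multiple units and counits (both on the base and arrow categories) into the three-object diagrams that witness comonad and monad compatibility; but the computation reduces in each case to the familiar mates calculus, and no new input is required. As an alternative route, one could invoke Lemma \ref{laxawfslem} and its dual, reducing the problem to showing that the lifted right adjoint $\tilde{S} \colon \Ralg \to \Falg$ preserves composition of algebras iff the lifted left adjoint $\tilde{T} \colon \Calg \to \Lalg$ preserves composition of coalgebras, a fact again provable via mates.
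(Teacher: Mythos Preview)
Your proposal is correct and follows essentially the same approach as the paper: both argue that each of the three defining diagrams for a lax morphism of awfs corresponds under the mates calculus to the corresponding diagram for a colax morphism of awfs, with the paper compressing this into a single sentence and a citation to Kelly's \emph{Doctrinal adjunction}. Your elaboration of how the three diagrams decompose into the comonad and monad compatibility conditions is accurate, and the alternative route you sketch via Lemma~\ref{laxawfslem} is a reasonable aside but not needed here.
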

\begin{proof}
Each diagram of Definition \ref{laxmordefn} is satisfied by $\rho$ if and only if its mate $\g$ satisfies the corresponding diagram of Definition \ref{colaxmordefn}, as can be verified by a diagram chase. Or see \cite{kellydoctrinal}.
\end{proof}

\begin{rmk}\label{odrmk} The proof of Lemma \ref{odlem} implies a conclusion slightly stronger than the statement. Given mates $\rho$ and $\g$ as above with respect to $T \dashv S$, to show that $(T,S,\g,\rho)$ is an adjunction of awfs, it suffices to show that either $\rho$ is a lax or $\g$ is a colax morphism of monads and that either $\rho$ is a lax or $\g$ is a colax morphism of comonads.
\end{rmk}

\begin{ex}\label{trivadjex} In any category with an algebraic model structure, the comparison map $\xi \colon (\CC_t,\FF) \ra (\CC,\FF_t)$ specifies an adjunction of awfs, where the adjunction is the trivial one with functors, unit, and counit all identities. In this case, $\xi$ is its own mate and the requirements that $(1,\xi) \colon (\CC_t, \FF) \ra (\CC,\FF_t)$ is a colax morphism of awfs and that $(1,\xi) \colon (\CC,\FF_t) \ra (\CC_t,\FF)$ is a lax morphism of awfs are equivalent.
\end{ex}

Less trivially, there exists a canonical adjunction of awfs $(\CC,\FF) \ra (\L,\R)$ whenever $(\CC,\FF)$ is generated by $\J$ and $(\L,\R)$ is generated by $T\J$ for some adjunction $T \dashv S$.  By Theorem \ref{Sthm}, there exists a natural transformation $\rho \colon QS \Ra SE$ such that $({\rho},1)$ is a lax morphism of monads. We will show that $(S,\rho)  \colon (\L,\R) \ra (\CC,\FF)$ is a lax morphism of awfs. It follows from Lemma \ref{odlem} that this situation gives rise to an adjunction of awfs $(T,S,\g,\rho)$, where $\g$ is the mate of $\rho$, proving the following theorem.

\begin{thm}\label{adjawfsthm}
Consider an adjunction $\xymatrix{T \colon \M \ar@<1ex>[r] \ar@{}|{\perp}[r] & \K \colon S \ar@<1ex>[l]}$ where $\J$ generates an awfs $(\CC,\FF)$ on $\M$ and $T\J$ generates an awfs $(\L,\R)$ on $\K$. Let $\rho$ be the natural transformation determined by Theorem \ref{Sthm} and let $\g$ be its mate. Then $(T,S,\g, \rho) \colon (\CC,\FF) \rightarrow (\L,\R)$ is an adjunction of awfs.
\end{thm}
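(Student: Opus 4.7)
The plan is to invoke the recognition principle of Lemma \ref{laxawfslem} together with Remark \ref{odrmk}, so as to avoid verifying the coassociativity diagrams of Definitions \ref{laxmordefn} and \ref{colaxmordefn} directly. From Theorem \ref{Sthm} we already have that $(S,(\rho,1)) \colon \R \ra \FF$ is a lax morphism of monads, so the unit triangle and the right-hand monad pentagon of (\ref{laxmoreq}) hold for free. By Lemma \ref{laxawfslem}, the remaining (comonad) pentagon of (\ref{laxmoreq}) is equivalent to the assertion that the lift $\tilde S \colon \Ralg \ra \Falg$ corresponding to $\rho$ preserves the canonical composition of algebras. Once this is in hand, $(S,\rho) \colon (\L,\R) \ra (\CC,\FF)$ is a lax morphism of awfs; by Lemma \ref{odlem} (or more precisely Remark \ref{odrmk}, since both monad and comonad halves of the $\rho$ side are now established), the mate $\g$ yields a colax morphism of awfs $(T,\g) \colon (\CC,\FF) \ra (\L,\R)$, and the quadruple $(T,S,\g,\rho)$ is an adjunction of awfs.

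It therefore remains to show that $\tilde S$ preserves composition. Cofibrant generation gives isomorphisms $\Ralg \cong (T\J)^{\boxslash}$ and $\Falg \cong \J^{\boxslash}$ over the arrow categories, and under these identifications $\tilde S$ has a concrete description extracted from the proof of Theorem \ref{Sthm}: it sends $(g,\psi) \in (T\J)^{\boxslash}$ to $(Sg, \psi^{\sharp}) \in \J^{\boxslash}$, where $\psi^{\sharp}(j,u,v) := \psi(Tj,\, u^{\flat},\, v^{\flat})^{\sharp}$ is defined by transposing both the lifting problem and its chosen solution under $T \dashv S$. For composable $(f,\phi),(g,\psi) \in (T\J)^{\boxslash}$ and a lifting problem $(u,v) \colon j \Ra Sg \cdot Sf$, Example \ref{compex} gives
\begin{align*}
\tilde S(\psi \bullet \phi)(j,u,v) &= (\psi \bullet \phi)(Tj,u^{\flat},v^{\flat})^{\sharp} = \phi\bigl(Tj,\, u^{\flat},\, \psi(Tj,\, fu^{\flat},\, v^{\flat})\bigr)^{\sharp}, \\
(\tilde S\psi \bullet \tilde S\phi)(j,u,v) &= \phi^{\sharp}\bigl(j,\, u,\, \psi^{\sharp}(j,\, Sf\cdot u,\, v)\bigr),
\end{align*}
and the naturality identity $(Sf \cdot u)^{\flat} = f \cdot u^{\flat}$, together with the fact that $(-)^{\sharp}$ and $(-)^{\flat}$ are mutually inverse on lifts, reduces the second line to the first.

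The only real obstacle is psychological: a direct attack on the colax comonad pentagon of (\ref{colaxmoreq}) looks daunting because $\rho$, and hence its mate $\g$, is defined only implicitly by the universal property of the free–forgetful adjunction into $\Ralg \cong (T\J)^{\boxslash}$, with no explicit formula for the components $\rho_k \colon QSk \ra SEk$. Passing through Lemma \ref{laxawfslem} replaces this implicit data by the explicit lifting functions $\psi^{\sharp}$, for which the adjunct bookkeeping above is entirely mechanical.
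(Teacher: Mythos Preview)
Your proposal is correct and follows essentially the same route as the paper: reduce via Lemma \ref{laxawfslem} to checking that $\tilde S$ preserves the canonical composition of algebras, verify this by the explicit adjunct computation with lifting functions from Example \ref{compex}, and then invoke Lemma \ref{odlem} to pass to the mate $\g$. The only difference is cosmetic: you package the adjunct manipulations in $(-)^{\sharp}$/$(-)^{\flat}$ notation, whereas the paper expands these out with the unit $\iota$ and counit $\nu$ and appeals to a triangle identity at the end.
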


First, we must show that $(S,\rho)$ is a lax morphism of awfs. Doing so directly is possible but quite hard, because a concrete understanding of $\rho$ is only obtained by laboriously computing $\g$, vis-\`{a}-vis running through the details of the small object argument. We use Lemma \ref{laxawfslem} instead.

\begin{proof}[Proof of Theorem \ref{adjawfsthm}]
By Lemma \ref{laxawfslem}, it suffices to show that the functor $\tilde{S}$ defined in the proof of Theorem \ref{Sthm} preserves the canonical composition of algebras. Then Lemma \ref{odlem} implies that $(T,S,\g,\rho)$ is an adjunction of awfs, where $\g$ is the mate of $\rho$.

Suppose $(f,\phi)$ and $(g,\psi)$ are composable objects of $T\J^{\boxslash}$. By definition, the functor $\tilde{S}$ takes the composite, described in Example \ref{compex}, to the morphism $S(gf)$ with the lifting function: $$(\psi\bullet\phi)^{\sharp}(j,a,b) = S \phi (Tj, \nu \cdot Ta, \psi(Tj, f \cdot \nu \cdot Ta, \nu \cdot Tb)) \cdot \iota$$ where $\iota$ and $\nu$ are the unit and counit of $T \dashv S$. By contrast \begin{align*} \psi^{\sharp} \bullet \phi^{\sharp}(j,a,b) &= \phi^{\sharp}(j, a, \psi^{\sharp}(j, Sf \cdot a, b)) \\ &= S \phi(Tj, \nu \cdot Ta, \nu \cdot TS \psi(Tj, \nu \cdot TSf \cdot Ta, \nu \cdot Tb) \cdot T\iota)\cdot \iota \\ &= S \phi(Tj, \nu \cdot Ta, \psi(Tj, f\cdot \nu \cdot Ta, \nu\cdot Tb)\cdot \nu_T \cdot T\iota )\cdot \iota, \end{align*} which is the same as above, after application of a triangle identity for the adjunction $T \dashv S$.
\end{proof}

Theorem \ref{adjawfsthm} extends Theorem \ref{Sthm} and the corresponding result for coalgebras, which we note, for completeness sake, as an immediate corollary.

\begin{cor}\label{Tthm} For any adjunction $\xymatrix{ T \colon \M \ar@<1ex>[r] \ar@{}|{\perp}[r] & \K \colon S \ar@<1ex>[l]}$ where a small category $\J$ generates an awfs $(\CC,\FF)$ on $\M$ and $T\J$ generates an awfs $(\L,\R)$ on $\K$, the left adjoint $T$ lifts to a functor $\raisebox{.25in}{\xymatrix{ \Calg \ar@{-->}[r]^{\tilde{T}} \ar[d]_U & \Lalg \ar[d]^U \\ \M^{\bf 2} \ar[r]^T & \K^{\bf 2}}}$
\end{cor}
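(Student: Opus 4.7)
The plan is to obtain Corollary \ref{Tthm} as a direct consequence of Theorem \ref{adjawfsthm}, which was proved immediately before it. That theorem produces an adjunction of awfs $(T,S,\g,\rho) \colon (\CC,\FF) \rightarrow (\L,\R)$ in precisely the hypothesized situation, and an adjunction of awfs packages together both a lax morphism of awfs on the right adjoint side and a colax morphism of awfs on the left adjoint side. We only need to extract the latter.

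Explicitly, Definition \ref{adjawfsdefn2} tells us that $(T,\g) \colon (\CC,\FF) \rightarrow (\L,\R)$ is a colax morphism of awfs. By Definition \ref{colaxmordefn}, this contains the data of a colax morphism of comonads $(1,\g) \colon TC \Ra LT$; explicitly, the first and second diagrams of (\ref{colaxmoreq}) record the unit and comultiplication conditions for this comonad morphism. Now invoke the categorical correspondence stated just after Definition \ref{colaxmordefn}: colax morphisms of comonads $(1,\g)$ are in bijection with lifts of $T$ across the forgetful functors to a functor $\tilde{T} \colon \Calg \ra \Lalg$. The lift produced by this correspondence is the required functor.

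There is essentially no obstacle here beyond citing the right result; the hard work was already done inside Theorem \ref{adjawfsthm}, which in turn leveraged Theorem \ref{Sthm} together with Lemmas \ref{laxawfslem} and \ref{odlem} to convert the easy construction of the lift $\tilde{S}$ on algebras into a full adjunction of awfs, and thereby (via the mates correspondence) into coalgebra data on the $T$ side. In particular, the indirection through $\tilde{S}$ and mates is exactly what makes this dual statement non-trivial: while $\tilde{S}$ can be described concretely using the isomorphism $\R\text{-}\mathbf{alg} \cong T\J^{\boxslash}$, the corresponding lift $\tilde{T}$ is obtained only by passing through Garner's small object argument to describe $\g$, and no direct construction is apparent.
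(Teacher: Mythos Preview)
Your proposal is correct and matches the paper's approach exactly: the paper states this result as an immediate corollary of Theorem \ref{adjawfsthm} with no further argument, and your unpacking of why it follows (extract the colax-morphism-of-comonads data from the adjunction of awfs and apply the standard bijection with lifts to coalgebra categories) is precisely the intended reasoning.
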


It is an easy exercise to check that adjunctions of awfs are composable, i.e., given adjunctions of awfs $(\CC,\FF) \rightarrow (\L',\R')$ and $(\L',\R') \rightarrow (\L,\R)$, the composite adjoint pair of functors and pasted natural transformations form an adjunction of awfs $(\CC,\FF) \rightarrow (\L,\R)$. Hence, the following corollary combines Example \ref{trivadjex} and Theorem \ref{adjawfsthm} to find adjunctions of awfs in a weaker situation, opening up an array of potential examples.

\begin{cor}\label{adjawfscor} Suppose we have an adjunction $\xymatrix{ T \colon \M \ar@<1ex>[r] \ar@{}|{\perp}[r] & \K \colon S \ar@<1ex>[l]}$ where $\J$ generates an awfs $(\CC,\FF)$ on $\M$ and $\K$ has an awfs $(\L,\R)$, not necessarily cofibrantly generated. Suppose also that $\K$ permits the small object argument and that we have a functor $\J \ra \Lalg$ lifting $T$. Then $T$ and $S$ give rise to an adjunction of awfs $(\CC,\FF) \rightarrow (\L,\R)$.
\end{cor}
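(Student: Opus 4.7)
My plan is to build the desired adjunction of awfs as a composite of two simpler ones. Since $\K$ permits the small object argument and $T\J \colon \J \ra \K^{\bf 2}$ is a small category over $\K^{\bf 2}$, Garner's construction (Theorem \ref{cgthm}) produces a cofibrantly generated awfs $(\L',\R')$ on $\K$ generated by $T\J$. Theorem \ref{adjawfsthm}, applied to $(\CC,\FF)$ on $\M$ (generated by $\J$) and $(\L',\R')$ on $\K$ (generated by $T\J$), then yields a canonical adjunction of awfs
$$(T,S,\g',\rho') \colon (\CC,\FF) \rightarrow (\L',\R').$$

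Next, I use the given functor $\J \ra \Lalg$ lifting $T$, which is the same data as a functor $T\J \ra \Lalg$ over $\K^{\bf 2}$. By the universal property of $(\L',\R')$ as the free awfs on $T\J$ (the defining property \eqref{freediag} reviewed in Section \ref{cgssec}), this functor factors uniquely as $\xi_* \circ \lambda$, where $\lambda \colon T\J \ra \L'\text{-}\mathrm{\bf coalg}$ is the canonical functor from Garner's construction and $\xi \colon (\L',\R') \ra (\L,\R)$ is a morphism of awfs in the sense of Definition \ref{morawfsdefn}. As observed in the example following Definition \ref{colaxmordefn}, any morphism of awfs over an identity adjunction is simultaneously a lax and a colax morphism of awfs; taking the identity natural transformation as its own mate, this data assembles into an adjunction of awfs
$$(1_{\K}, 1_{\K}, \xi, \xi) \colon (\L',\R') \rightarrow (\L,\R)$$
over the identity adjunction on $\K$, exactly as in Example \ref{trivadjex}.

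Finally, since adjunctions of awfs compose (by composing the adjoint pairs and pasting the characterizing natural transformations, as asserted immediately before the statement), composing the two adjunctions of awfs produces the desired adjunction of awfs $(T, S, \g, \rho) \colon (\CC,\FF) \rightarrow (\L,\R)$, where $\g$ is obtained by pasting $\g'$ with $\xi$ and $\rho$ is obtained by pasting $\rho'$ with $\xi$. I do not anticipate any real obstacle: the entire argument is a formal combination of the universal property of Garner's small object argument (which supplies the morphism of awfs $\xi$) with the already-established Theorem \ref{adjawfsthm} (which supplies the nontrivial adjunction of awfs to the auxiliary $(\L',\R')$). The only point requiring any care is checking that the composite of the two adjunctions of awfs does indeed form an adjunction of awfs, but this is a straightforward verification against Definitions \ref{laxmordefn}--\ref{adjawfsdefn2} and the mates correspondence, and is precisely the compositionality exercise flagged by the author.
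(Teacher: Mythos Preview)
Your proposal is correct and follows essentially the same route as the paper's own proof: construct the auxiliary awfs $(\L',\R')$ generated by $T\J$, apply Theorem \ref{adjawfsthm} to get the adjunction of awfs $(\CC,\FF)\to(\L',\R')$, use the universal property of the free awfs to obtain a morphism of awfs $(\L',\R')\to(\L,\R)$ (regarded as an adjunction of awfs over the identity), and compose. The only difference is that you spell out a few more details than the paper does.
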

\begin{proof}
By Theorem \ref{cgthm}, there exists an awfs $(\L',\R')$ on $\K$ that is cofibrantly generated by $T\J$. By Theorem \ref{adjawfsthm}, $T$ and $S$ give rise to an adjunction of awfs $(\CC,\FF) \rightarrow (\L',\R')$. The functor $\J \ra \Lalg$ lifting $T$ is equivalently described as a functor $T\J \ra \Lalg$ over $\K^{\bf 2}$. By the universal property of $T\J \ra {\mathbb{L}'\text{-}\mathrm{{\bf coalg}}}$, there exists a morphism of awfs $(\L', \R') \ra (\L,\R)$, which is equivalently an adjunction of awfs $(\L',\R') \rightarrow (\L,\R)$ over the identity functors on $\K^{\bf 2}$. We obtain the desired adjunction of awfs $(\CC,\FF) \rightarrow (\L,\R)$ by composing the above two.
\end{proof}

\subsection{Change of base in Garner's small object argument}\label{cobssec}

In the situation of Theorem \ref{adjawfsthm}, there are two canonical methods for assigning $\L$-coalgebra structures to the objects $Tj$ of the generating category $T\J$. One method applies the functor $T$ to the canonical $\CC$-coalgebra structure for $j$ and then composes with the natural transformation $\g$ accompanying the lift $\tilde{T} \colon \Calg \ra \Lalg$. The other simply assigns $Tj$ the canonical coalgebra structure given by Garner's small object argument via the functor (\ref{freediag}). We might hope that the two results are the same. This is the content of an immediate corollary to the main theorem of this section.

\begin{cor}\label{natunitcor} Given an adjunction $T \dashv S$ between categories $\M$ and $\K$,  consider a category $\J$ which generates an awfs $(\CC,\FF)$ on $\M$ and such that $T\J$ generates an awfs $(\L,\R)$ on $\K$. Then the functor $\tilde{T}$ arising from the canonical adjunction of awfs $(\CC,\FF) \ra (\L,\R)$ commutes with the units exhibiting the ``freeness'' of cofibrantly generated awfs, i.e., the diagram
\begin{equation}\label{cobunit}\xymatrix@R=15pt@C=10pt{ \J \ar[dd] \ar[dr]^{\l^{\M}} \ar@/^/[drrr]^{\l^{\K}} \\&   \Calg \ar[rr]^-{\tilde{T}} \ar[dl] & & \Lalg \ar[dl] \\  \M^{\bf 2} \ar[rr]_-{T} & & \K^{\bf 2}} \end{equation}
commutes.
\end{cor}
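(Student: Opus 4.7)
The plan is to deduce this directly from the main change-of-base theorem of Section \ref{cobssec}. I anticipate that theorem extending the universal property of $\lambda^{\M}$ from Remark \ref{comprmk} in the following way: for any adjunction $T \dashv S \colon \M \leftrightarrows \K$ and any awfs $(\L',\R')$ on $\K$, the assignment $\alpha \mapsto \tilde{T}_\alpha \circ \lambda^{\M}$ is a bijection between adjunctions of awfs $\alpha \colon (\CC,\FF) \to (\L',\R')$ covering $T \dashv S$ and functors $\J \to \L'\text{-}\mathrm{{\bf coalg}}$ lifting $TJ$. In essence, adjunctions of awfs out of a cofibrantly generated $(\CC,\FF)$ are fully determined by the data of a lift of the generators to coalgebras for the target awfs, which is the natural adjunction-theoretic upgrade of Remark \ref{comprmk}.

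Granting this theorem, I would specialize to $(\L',\R') = (\L,\R)$, the awfs cofibrantly generated by $T\J$ with unit $\lambda^{\K}$. Two lifts of $TJ \colon \J \to \K^{\bf 2}$ into $\Lalg$ are immediately available. The first, $\tilde{T} \circ \lambda^{\M}$, is tautologically the image under the bijection of the canonical adjunction of Theorem \ref{adjawfsthm}. The second, $\lambda^{\K}$ itself, is in the codomain of the bijection and therefore has a unique preimage, some adjunction $\alpha \colon (\CC,\FF) \to (\L,\R)$ over $T \dashv S$. The crux of the argument is to identify $\alpha$ with the canonical adjunction of Theorem \ref{adjawfsthm}: once this is done, both $\tilde{T} \circ \lambda^{\M}$ and $\lambda^{\K}$ are the image under the bijection of the same adjunction, so single-valuedness forces them to coincide and (\ref{cobunit}) commutes.

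The main obstacle is precisely this identification of $\alpha$ with the canonical adjunction. I expect it to follow by unwinding the two constructions. Theorem \ref{adjawfsthm} builds the canonical adjunction from the lifted right adjoint $\tilde{S}$ produced in Theorem \ref{Sthm}, which in turn is defined via the isomorphism $\Ralg \cong T\J^{\boxslash}$ inherited from $\lambda^{\K}$ together with adjointness; the inverse direction of the change-of-base bijection should run precisely this recipe in reverse, starting from the lift $\lambda^{\K}$ and returning exactly this $\tilde{S}$ and its mate $\tilde{T}$. With that identification in place, no further calculation is required — Corollary \ref{natunitcor} is, as advertised, an immediate consequence of the change-of-base universal property.
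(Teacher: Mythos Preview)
Your approach is essentially the paper's: both deduce the corollary from Theorem~\ref{cobthm}, the extended universal property of $\lambda^{\M}$. The paper's entire proof is the sentence ``the desired corollary now follows immediately from the universal property of $\lambda^{\M}$,'' so you are in fact being more careful than the paper in isolating the identification of the canonical adjunction of Theorem~\ref{adjawfsthm} with the preimage of $\lambda^{\K}$ under the reflection bijection.

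That identification is real content, and your proposed resolution---that the inverse of the change-of-base bijection ``runs the recipe in reverse'' and therefore returns exactly the $\tilde{S}$ of Theorem~\ref{Sthm}---is a heuristic rather than an argument. The proof of Theorem~\ref{cobthm} constructs the reflected adjunction entirely on the coalgebra side, via the density comonad, the pushout ofs, and Kelly's free-monoid construction; nothing there visibly reproduces the adjunct-lifting-function description of $\tilde{S}$. A cleaner way to close the gap is to verify directly that for the canonical adjunction, the chosen lift of $\tilde{T}(c)$ against an $\R$-algebra $a$ is adjunct to the chosen lift of $c$ against $\tilde{S}(a)$: this is a short computation using only naturality of $\rho$, the mate formula for $\gamma$, and the triangle identities. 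Specializing $c = \lambda^{\M}(j)$ and using that the isomorphisms $\Falg \cong \J^{\boxslash}$ and $\Ralg \cong T\J^{\boxslash}$ are induced by $\lambda^{\M}$ and $\lambda^{\K}$ respectively then gives $\tilde{T}\circ\lambda^{\M} = \lambda^{\K}$ on the nose, and Theorem~\ref{cobthm} is not even strictly needed for this step.
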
 

Given a category $\M$ that permits the small object argument, Garner's construction produces a reflection of any small category $\J$ over $\M^{\bf 2}$ along the so-called ``semantics'' functor \begin{equation}\label{gsemantics}{\small \xymatrix@R=2pt{\G= \AWFS(\M) \ar[r]^-{\G_1} & \LAWFS(\M) \ar[r]^-{\G_2} & \Cmd(\M^{\bf 2}) \ar[r]^-{\G_3} & \CAT/\M^{\bf 2} \\ (\CC,\FF) \ar@{|->}[r] & (\CC,Q) \ar@{|->}[r] & \CC \ar@{|->}[r] & \Calg}}\end{equation}
from the category of awfs on $\M$ and morphisms of awfs to the slice category over $\M^{\bf 2}$ \cite[\S 4]{garnerunderstanding}. Here, $\Cmd(\M^{\bf 2})$ is the category of comonads on $\M^{\bf 2}$ and comonad morphisms and $\LAWFS(\M)$ is the full subcategory of comonads over dom, or equivalently the category of functorial factorizations, whose left functor is a comonad. 

The component of the unit of this reflection at a small category $\J$ generating an awfs $(\CC,\FF)$ is the functor $\J \ra \Calg$ over $\M^{\bf 2}$ of (\ref{freediag}), which is universal with respect to morphisms of awfs. We prove that these maps are universal with respect to all adjunctions of awfs. To find an appropriate categorical context for the statement and proof of this result, we must enlarge the categories of (\ref{gsemantics}). The new domain is the category $\AWFS_{\ladj}$, whose objects are awfs and whose morphisms are adjunctions of awfs. In analogy with (\ref{gsemantics}), there is a forgetful functor to $\CAT/(-)^{\bf 2}_{\ladj}$, whose objects are categories sliced over arrow categories, whose morphisms are adjunctions between the base categories (before applying $(-)^{\bf 2}$) together with a chosen lift of the left adjoint to the fibers. This ``semantics'' functor factors as:
\begin{equation}\label{semantics}{\small \xymatrix@C=17pt{\G^{\ladj}= \AWFS_{\ladj} \ar[r]^-{\G^{\ladj}_1} & \LAWFS_{\ladj} \ar[r]^-{\G^{\ladj}_2} & \Cmd(-)^{\bf 2}_{\ladj} \ar[r]^-{\G^{\ladj}_3} & \CAT/(-)^{\bf 2}_{\ladj}}}\end{equation} Here, $\Cmd(-)^{\bf 2}_{\ladj}$ is the category of comonads on arrow categories and colax morphisms of comonads whose functor is the left adjoint of a specified adjunction, and $\LAWFS_{\ladj}$ is the full subcategory of comonads over dom.

When restricted to objects whose base categories are cocomplete, each category in (\ref{semantics}) is cofibered over $\CAT^{\bf 2}_{\ladj}$, the category of arrow categories and adjunctions of underlying categories, regarded as morphisms in the direction of the left adjoint. The fibers over the identity arrows are exactly the categories of (\ref{gsemantics}). For $\AWFS_{\ladj}$, we perhaps need to insist that the categories be locally finitely presentable, in which case this statement says that any awfs can be lifted along an adjunction, even if it is not cofibrantly generated. This decidedly non-trivial result is due to Richard Garner.

In \cite{garnerunderstanding}, Garner shows that when $\M$ permits the small object argument, any small category can be reflected along (\ref{gsemantics}). We show that this construction gives a reflection of these objects along (\ref{semantics}), which is precisely what is needed for the desired corollary.

\begin{thm}\label{cobthm} For any small category $\J$ over the arrow category of a category $\M$ that permits the small object argument, the unit functor constructed by Garner's small object argument is universal among adjunctions of awfs.
\end{thm}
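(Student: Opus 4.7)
The plan is to extend Garner's three-stage reflection factorization (\ref{semantics}) from the fibers over each base category to the total category $\AWFS_{\ladj}$. Garner has already shown that the unit $\lambda : \J \ra \Calg$ is universal among morphisms of awfs in the fiber over the identity adjunction on $\M$; what remains is to verify the corresponding universal property when the target awfs $(\L,\R)$ lives on a different category $\K$ related to $\M$ by an adjunction $T \dashv S$. Concretely, given a functor $\bar\lambda : \J \ra \Lalg$ over $T \circ J$, we must produce a unique adjunction of awfs $(T, S, \g, \rho) : (\CC, \FF) \ra (\L, \R)$ whose induced lift $\tilde{T} : \Calg \ra \Lalg$ satisfies $\tilde{T} \circ \lambda = \bar\lambda$. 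By Lemma \ref{odlem} and Remark \ref{odrmk}, it suffices to construct the colax morphism of awfs $(T, \g)$; the lax data $(S, \rho)$ is then determined as the mate of $\g$ with respect to (\ref{thesemates}).

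The construction of $\g$ proceeds in three stages mirroring (\ref{semantics}). At the first stage, the left adjoint $T : \M^{\bf 2} \ra \K^{\bf 2}$ is cocontinuous and therefore preserves the pointwise left Kan extension defining the density comonad, so $TL^0 \cong \mathrm{Lan}_J (T \circ J)$. The universal property of this Kan extension, applied to the natural transformation encoded by $\bar\lambda$, produces a unique natural transformation $\g^0 : TL^0 \Ra LT$ satisfying the colax comonad morphism axioms. At the second stage, the pushout (\ref{steponeeq}) defining the step-one left factor $L^1$ on $\M$ is preserved by $T$, so the universal property of the pushout extends $\g^0$ uniquely to a natural transformation $\g^1 : TL^1 \Ra LT$ compatible with the step-one functorial factorization. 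At the third stage, the transfinite iteration of the small object argument on $\M$ converges under the hypothesis that $\M$ permits it; at each successor and limit stage the colimits (pushouts, coequalizers, transfinite composites) defining $L^{\a}$ are preserved by $T$, yielding by transfinite recursion the desired $\g : TQ \Ra ET$.

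The main obstacle is verifying at the third stage that the resulting $\g$ is not merely a colax morphism of functorial factorizations but a genuine colax morphism of awfs --- that is, compatible with the full comultiplication and multiplication of $\CC$ and $\L$, not just their underlying pointed endofunctor structures. For this I would invoke the colax dual of Lemma \ref{laxawfslem}: it suffices to check that the induced lift $\tilde{T} : \Calg \ra \Lalg$ preserves the canonical composition of coalgebras from Section \ref{compssec}. Since the coalgebra composition on $\Calg$ is built from colimits (and from the comultiplication of $\CC$, which is itself constructed inductively from such colimits) that $T$ preserves at every stage of the iteration, this compatibility propagates inductively through the construction. Uniqueness of the resulting adjunction of awfs is inherited from the uniqueness of the reflection at each of the three stages. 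Finally, specialising to $T = 1_\M$ and $S = 1_\M$ recovers Garner's original reflection, from which Corollary \ref{natunitcor} follows as an immediate consequence.
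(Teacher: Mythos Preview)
Your three-stage outline matches the paper's decomposition, and your treatment of the first two stages is essentially the same as the paper's: the density comonad reflection uses that left adjoints preserve left Kan extensions, and the step-one reflection uses that $T$ preserves the defining pushout (the paper phrases this second step via an orthogonal factorization system on arrow categories --- pushout squares followed by squares with identity domain --- which packages the existence and uniqueness of $\g^1$ cleanly, but the content is the same).

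The divergence is at the third stage. The paper does not run a direct transfinite induction through the iterated steps of the small object argument. Instead it invokes the two-fold monoidal structure on the category $\FunF(\M)$ of functorial factorizations: $\LAWFS(\M)$ is the category of $\odot$-comonoids and $\AWFS(\M)$ is the category of $\otimes,\odot$-bialgebras, and the reflection $\LAWFS \ra \AWFS$ is Kelly's free $\otimes$-monoid construction. The paper then checks that colax morphisms of functorial factorizations lifting a fixed left adjoint are compatible with $\otimes$ and $\odot$ and with the distributivity $\a$ between them. This is the key point: because the free monoid construction takes place in the category of $\odot$-comonoids, the output is automatically a bialgebra morphism, which is precisely a colax morphism of awfs. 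The compatibility with both the comultiplication and the multiplication comes for free from the structural argument.

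Your direct inductive approach, by contrast, leaves a real gap at exactly this point. You propose to verify the awfs condition by invoking the dual of Lemma~\ref{laxawfslem}, but that lemma presupposes you already have a colax morphism of comonads --- it does not help you establish the comonad compatibility in the first place, and your inductive construction of $\g$ at the level of functorial factorizations does not by itself yield this. Moreover, your justification that ``the coalgebra composition on $\Calg$ is built from colimits \ldots\ that $T$ preserves'' is not the right shape: the canonical composition of $\CC$-coalgebras is defined using the multiplication $\mu$ of the monad $\FF$, and checking that $\tilde T$ intertwines this with the composition of $\L$-coalgebras (which uses the multiplication of $\R$) is not simply a matter of colimit preservation. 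The two-fold monoidal framework is what makes this bookkeeping tractable.
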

\begin{proof}
It suffices to show that such $\J$ can be reflected along each of the $\G^{\ladj}_i$, i.e., the unit functor constructed at each step in \cite[\S 4]{garnerunderstanding} satisfies the appropriate universal property.

The reflection of $\J$ along $\G^{\ladj}_3$ is its density comonad $\CC^0$, i.e., the left Kan extension of $J \colon \J \ra \M^{\bf 2}$ along itself. If $T \colon \M \ra \K$ is a left adjoint and $\L$ is an arbitrary comonad on $\K^{\bf 2}$, functors $\J \ra \Lalg$ lifting $T$ are in bijection with natural transformations $TJ \Ra LTJ$, which are in bijection with natural transformations $TC^0 \Ra LT$ because left adjoints preserve left Kan extensions. By the universal property of the density comonad, or alternatively, by a straightforward diagram chase, such natural transformations are always comonad morphisms \cite[Chapter II]{dubuckan}. This shows that the unit $\J \ra \CC^0$-{\bf coalg} is universal with respect to comonad morphisms lifting left adjoints and hence gives a reflection of $\J$ along $\G^{\ladj}_3$.

The refection of $\CC^0$ along $\G^{\ladj}_2$ is given by an ofs (see Example \ref{ofsex}) on arrow categories, which factors a given morphism (a square in the underlying category) as a pushout square followed by a square whose domain component is an identity. Explicitly, the reflection $\CC^1$ is obtained by factoring the counit of $\CC^0$ as depicted below: \begin{equation*}\label{G2eq}\xymatrix{ \cdot \ar[d]_{C^0f} \ar[r] \ar@{}[dr]|(.8){\ulcorner} & \cdot \ar[d]^{C^1f} \ar@{=}[r] & \cdot \ar[d]^f \\ \cdot \ar[r] & \cdot \ar[r] & \cdot}\end{equation*}

Let $\psi \colon C^0 \Ra C^1$ be the natural transformation whose component at $f$ is the left-hand square depicted above. Given a colax morphism of comonads $(T\colon \M \ra \K, \g\colon TC^0 \Ra LT)$ where $T$ is a left adjoint and $\L$ is a comonad on $\K^{\bf 2}$ over $\dom \colon \K^{\bf 2} \ra \K$, we have a commutative square $$\xymatrix{ TC^0 \ar[r]^{\g} \ar[d]_{T \psi} & LT \ar[d]^{\e T} \\ TC^1 \ar[r]_{T\e} & T}$$ because $\g$ is a comonad morphism and the lower left composite is $T$ applied to the counit of $C^0$. The left arrow $T\psi$ is in the left class of the ofs described above because $T$, as a left adjoint between the underlying categories, preserves pushouts, and so the components of $T\psi$ are pushout squares. The right arrow is in the right class because $\L$ was assumed to be a comonad over dom. The ofs described above, this time on $\K$, solves the lifting problem to obtain the components of a unique natural transformation $\g' \colon TC^1 \Ra LT$. By setting up appropriate lifting problems and using the fact that any solutions that exist must be unique, we can easily check that $\g'$ is a comonad morphism, as desired. This shows that the unit functor $\CC^0$-{\bf coalg}$\ra \CC^1$-{\bf coalg} is universal with respect to colax morphisms of comonads over $\dom$ that lift a left adjoint; hence, it exhibits $\CC^1$ as the reflection of $\CC^0$ along $\G^{\ladj}_2$.

It remains only to consider the reflection of $\CC^1$ along $\G^{\ladj}_1$. For each category $\M$, there is a strict two-fold monoidal category $\FunF(\M)$ of functorial factorizations of $\M$ (see \cite{garnercofibrantly, garnerunderstanding} and \cite{bfsviterated}), for which $\LAWFS(\M)$ is the category of $\odot$-comonoids and $\AWFS(\M)$ is the category of $\otimes, \odot$-bialgebras. The product $\otimes$ (resp.~$\odot$) uses the second awfs to re-factor the right (resp.~left) half of the factorization produced by the first awfs. To reflect from $\odot$-comonoids into bialgebras, Garner uses Max Kelly's construction of the free $\otimes$-monoid on a pointed object \cite{kellyunified}, in this case the unique arrow $I \ra \vec{Q^1}$ from the unit for $\otimes$, which is initial in $\FunF(\M)$, to  the functorial factorization of $\CC^1$. 

Let $\FunF_{\ladj}$ be the category of functorial factorizations over an arbitrary base whose morphisms are \emph{colax morphisms of functorial factorizations} lifting left adjoints. If $\vec{X}$ is a functorial factorization on $\M$ and $\vec{Y}$ is a functorial factorization on $\K$, then a morphism $\phi\colon \vec{X} \ra \vec{Y}$ lifting a left adjoint $T\colon \M \ra \K$ is a natural transformation $\phi\colon TX \Ra YT$ such that the two triangles analogous to the left-hand diagram of (\ref{colaxmoreq}) commute. This category is not two-fold monoidal, as we have no way to combine objects in different fibers. However, given objects $\vec{X}$ and $\vec{Z}$ in the fiber over $\M$ and $\vec{Y}$ and $\vec{W}$ in the fiber over $\K$ together with morphisms $\phi\colon \vec{X} \ra \vec{Y}$ and $\psi\colon \vec{Z} \ra \vec{W}$ lifting the same left adjoint $T\colon \M \ra \K$, we do obtain lifts $\phi \otimes \psi \colon \vec{X} \otimes \vec{Z} \ra \vec{Y} \otimes \vec{W}$ and $\phi \odot \psi \colon \vec{X} \odot \vec{Z} \ra \vec{Y} \odot \vec{W}$ of $T$.

Furthermore, if $\phi$ and $\psi$ are $\odot$-comonoid morphisms, then so is $\phi \otimes \psi$. The proof uses the fact that $\odot$ distributes over $\otimes$ in each fiber \cite[\S 3.2]{garnercofibrantly}, and the canonical arrows $\a$ exhibiting this distributivity are natural with respect to colax morphisms of functorial factorizations: 
$${ \xymatrix{ (\vec{X} \odot \vec{X'}) \otimes (\vec{Z} \odot \vec{Z'}) \ar[r]^{\a} \ar[d]|{(\phi \odot \phi') \otimes (\psi \odot \psi')} & (\vec{X} \otimes \vec{Z}) \odot (\vec{X'} \otimes \vec{Z'}) \ar[d]|{(\phi \otimes \psi) \odot (\phi' \otimes \psi')} \\ (\vec{Y} \odot \vec{Y'}) \otimes (\vec{W} \odot \vec{W'}) \ar[r]_{\a} & (\vec{Y} \otimes \vec{W} ) \odot (\vec{Y'} \otimes \vec{W'})}}$$ In other words, if $\phi$ and $\psi$ are morphisms in $\LAWFS_{\ladj}$, so is $\phi \otimes \psi$. This is all the structure we need to prove the unit satisfies the desired universal property; we need not consider the inner workings of the category $\LAWFS_{\ladj}$ any further.

Given a morphism from a pointed object $I \ra \vec{X}$ in the fiber over $\M$ to a $\otimes$-monoid $\vec{Y}$ in the fiber over $\K$, we inductively obtain morphisms from the colimits involved in Kelly's transfinite construction to $\vec{Y} \otimes \vec{Y}$ and thus to $\vec{Y}$ by applying the multiplication $\mu \colon \vec{Y} \otimes \vec{Y} \ra \vec{Y}$. By the universal property, the resulting morphism from the free $\otimes$-monoid on $I \ra \vec{X}$ to $\vec{Y}$ is a $\otimes$-monoid morphism in the category of $\odot$-comonoids and $\odot$-comonoid morphisms, and is unique. Applying this to the situation at hand, a colax morphism of comonads $(\CC^1,Q^1) \ra (\L, E)$ lifting the left adjoint of a specified adjunction and whose target underlies an awfs $(\L,\R)$ factors through a unique colax morphism of awfs $(\CC,\FF) \ra (\L,\R)$. By Lemma \ref{odlem}, this determines a unique adjunction of awfs. Hence, the unit of this reflection satisfies the desired universal property, completing the proof.
\end{proof}

The desired corollary now follows immediately from the universal property of $\lambda^{\M}$. We will need this result in the next section.

\section{Algebraic Quillen adjunctions}\label{algquillensec}

We can now prove that the adjunction between the algebraic model structures of Theorem \ref{adjthm} is canonically an algebraic Quillen adjunction.

Recall the following definition.

\begin{defn1} Let $\M$ have an algebraic model structure $\xi^{\M} \colon (\CC_t,\FF) \ra (\CC,\FF_t)$ and let $\K$ have an algebraic model structure $\xi^{\K} \colon (\L_t,\R) \ra (\L, \R_t)$. An adjunction $\xymatrix{ T \colon \M \ar@<1ex>[r] \ar@{}[r]|-{\perp} & \K \colon S \ar@<1ex>[l]}$ is an \emph{algebraic Quillen adjunction} if there exist natural transformations $\g_t$, $\g$, $\rho_t$, and $\rho$ determining five adjunctions of awfs
\begin{equation}\label{algquilldiag}\xymatrix@R=30pt@C=80pt{  (\CC_t, \FF) \ar[dr]|-{(T,S,\g\cdot T\xi^{\M}, S\xi^{\K}\cdot \rho)} \ar[r]^{(T,S, \g_t,\rho)} \ar[d]_{(1,1,\xi^{\M},\xi^{\M})} & (\L_t,\R) \ar[d]^{(1,1,\xi^{\K},\xi^{\K})} \\ (\CC,\FF_t) \ar[r]_{(T,S,\g,\rho_t)}   & (\L,\R_t) }\end{equation} such that both triangles commute.
\end{defn1}

\begin{thm1} Let $\xymatrix{ T \colon \M \ar@<1ex>[r] \ar@{}[r]|-{\perp} & \K \colon S \ar@<1ex>[l]}$ be an adjunction. Suppose $\M$ has an algebraic model structure, generated by $\I$ and $\J$, with comparison map $\xi^{\M}$. Suppose $\K$ has the algebraic model structure, generated by $T\I$ and $T\J$, with canonical comparison map $\xi^{\K}$. Then $T \dashv S$ is canonically an algebraic Quillen adjunction. 
\end{thm1}
\begin{proof} 
Write $Q_t$, $Q$, $E_t$, and $E$ for the functors accompanying the functorial factorizations of the awfs $(\CC_t,\FF)$, $(\CC,\FF_t)$, $(\L_t,\R)$, and $(\L,\R_t)$, respectively. Then by Theorem \ref{adjawfsthm} the natural transformations $$\g_t \colon TQ_t \Ra E_t T,\  \g \colon TQ \Ra ET,\ \rho \colon Q_t S \Ra SE_t,\  \text{and}\ \rho_t \colon QS \Ra SE$$ arising from the canonical lifts of $S$ give rise to adjunctions of awfs $$(T,S,\g_t,\rho) \colon (\CC_t,\FF) \ra (\L_t,\R) \quad \text{and} \quad (T,S, \g, \rho_t) \colon (\CC,\FF_t) \ra (\L,\R_t).$$ Composing the left-hand adjunction with $\xi^{\K}$ and the right-hand adjunction with $\xi^{\M}$, which we saw in Example \ref{trivadjex} are themselves adjunctions of awfs, we obtain two canonical adjunctions of awfs \begin{equation}\label{adjawfspair}\xymatrix{(\CC_t,\FF) \ar@<.5ex>[rrr]^-{(T,S, \xi^{K}T\cdot \g_t, S\xi^{\K} \cdot \rho)} \ar@<-.5ex>[rrr]_-{(T,S, \g \cdot T\xi^{\M}, \rho_t \cdot \xi^{\M}S)} & & & (\L,\R_t).}\end{equation} We'll show that the corresponding natural transformations are the same.

By the correspondence between colax morphisms of comonads and natural transformations \cite{johnstoneadjoint}, to show that  \begin{equation}\label{natofnat} \xymatrix{ TQ_t  \ar[r]^{\g_t} \ar[d]_{T\xi^{\M}} & E_tT \ar[d]^{\xi^{\K}T} \\ TQ \ar[r]_{\g} & ET} \end{equation} commutes, it suffices to show that both composites correspond to the same lift of $T$ to a functor $\Ctalg \ra \Lalg$. 

Let $\tilde{T}_t \colon \Ctalg \ra \Ltalg$ and $\tilde{T} \colon \Calg \ra \Lalg$ denote the lifts of $T$ corresponding to $\g_t$ and $\g$, respectively. We must show the the right-hand diagram of (\ref{natliftdiag}) commutes. By the definition of $\xi^{\K}$ in the proof of Theorem \ref{adjthm}, the outer rectangle of $$\xymatrix{\J \ar[d] \ar[r]^-{\l^{\M}} & \Ctalg \ar[r]^{(\xi^{\M})_*} \ar[d]_{\tilde{T}_t} & \Calg \ar[d]^{\tilde{T}} \\ T\J \ar[r]_-{\l^{\K}} & \Ltalg \ar[r]_{(\xi^{\K})_*} & \Lalg}$$ commutes. By Corollary \ref{natunitcor}, the left-hand square commutes. By Theorem \ref{cobthm}, the unit $\l^{\M}$ is universal among adjunctions of awfs, which implies that the right-hand square commutes.

The other half of the proof now follows formally, using the fact that the natural transformations $\rho$ and $\rho_t$ defining the lifts $\tilde{S}$ and $\tilde{S}_t$ of (\ref{natliftdiag}) are mates of the natural transformations $\g_t$ and $\g$ defining the lifts $\tilde{T}_t$ and $\tilde{T}$. If $\iota$ and $\nu$ are the unit and counit of $T \dashv S$, the commutative diagram \begin{equation}\label{natofnat2}\xymatrix@C=40pt{ Q_tS \ar[d]_{\xi^{\M}S} \ar[r]^-{\iota Q_tS} & STQ_t S \ar[d]_{ST\xi^{\M}{S}} \ar[r]^{S\g_t S} & SE_t TS \ar[d]^{S \xi^{\K}TS} \ar[r]^{{SE_t(\nu,\nu)}} & SE_t \ar[d]^{S\xi^{\K}} \\ QS \ar[r]_-{\iota QS} & STQS \ar[r]_{S\g S} & SETS \ar[r]_{SE(\nu,\nu)} & SE}\end{equation} says that $S\xi^{\K}\cdot \rho = \rho_t \cdot \xi^{\M}S$. This tells us that the diagram of functors on the left-hand side of (\ref{natliftdiag}) commutes, which proves that the two adjunctions (\ref{adjawfspair}) are the same and that $T \dashv S$ is an algebraic Quillen adjunction.
\end{proof}

Note that a diagram like (\ref{natofnat2}), which shows that the natural transformations $\g \cdot T\xi^{\M} \colon TQ_t \Ra ET$ and $S\xi^{\K} \cdot \rho \colon Q_t S \Ra SE$ are mates, appears in the proof that adjunctions of awfs can be composed. In light of Corollary \ref{adjawfscor}, we expect that many other naturally occurring examples of Quillen adjunctions can be algebraicized to give algebraic Quillen adjunctions.

\end{document}